\theoremstyle{plain}
\newtheorem{thm}{Theorem}[section]
\newtheorem{prop}[thm]{Proposition}
\newtheorem{lemma}[thm]{Lemma}
\newtheorem{cor}[thm]{Corollary}
\newtheorem{claim}[thm]{Claim}
\theoremstyle{definition}
\newtheorem*{defn}{Definition}
\newcommand{\e}{\varepsilon}
\newcommand{\C}{\mathbb{C}}
\newcommand{\EE}{\mathcal{E}}
\newcommand{\R}{\mathbb{R}}
\newcommand{\uu}{\mathbf{u}}
\newcommand{\vv}{\mathbf{v}}
\newcommand{\ww}{\mathbf{w}}
\newcommand{\1}{\mathbf{1}}
\newcommand{\fout}{f^{\mathrm{out}}}
\DeclareMathOperator{\area}{\mathrm{area}}
\DeclareMathOperator{\diam}{\mathrm{diam}}
\DeclareMathOperator{\Exp}{\mathbb{E}}
\DeclareMathOperator{\Span}{\mathrm{Span}}
\DeclareMathOperator{\gap}{\mathrm{gap}}
\DeclareMathOperator{\strength}{\mathrm{strength}}
\DeclareMathOperator{\Int}{\mathrm{Int}}
\DeclareMathOperator{\Prob}{\mathbb{P}}
\DeclareMathOperator{\tlog}{\mathrm{tlog}}
\DeclareMathOperator{\conv}{\mathrm{conv}}
\DeclareMathOperator{\dist}{\mathrm{dist}}
\newcommand{\mcl}{\mathcal}
\newcommand{\DD}{\mathbb{D}}
\title{The Dirichlet problem for orthodiagonal maps}
\author{Ori Gurel-Gurevich, Daniel C. Jerison and Asaf Nachmias}
\begin{document}
\begin{abstract} We prove that the discrete harmonic function corresponding to smooth Dirichlet boundary conditions on orthodiagonal maps, that is, plane graphs having quadrilateral faces with orthogonal diagonals, converges to its continuous counterpart as the mesh size goes to $0$. This provides a convergence statement for discrete holomorphic functions, similar to the one obtained by Chelkak and Smirnov \cite{ChelkakSmirnov2011} for isoradial graphs. We observe that by the double circle packing theorem \cite{BriSch93}, any finite, simple, 3-connected planar map admits an orthodiagonal representation.

Our result improves the work of Skopenkov \cite{S13} and Werness \cite{W15} by dropping \emph{all} regularity assumptions required in their work and providing effective bounds. In particular, no bound on the vertex degrees is required. Thus, the result can be applied to models of random planar maps that with high probability admit orthodiagonal representation with mesh size tending to $0$. In a companion paper \cite{GGJN19}, we show that this can be done for the discrete \emph{mating-of-trees} random map model of Duplantier, Gwynne, Miller and Sheffield \cite{DMS14,GMS17}.
\end{abstract}

\maketitle

\section{Introduction}

Discrete complex analysis is a powerful tool in the study of two-dimensional statistical physics. It has been employed to prove the conformal invariance of the scaling limit of critical percolation \cite{SmirnovPerc} and the critical Ising/FK model \cite{SmirnovIsing,ChelkakSmirnovIsing}, see Smirnov's ICM survey \cite{SmirnovICM}.
The high-level program of such proofs is to 1) find a model-dependent function (the so-called \emph{discrete parafermionic observable}) on the lattice which satisfies some discrete version of the Cauchy-Riemann equations; 2) use discrete complex analysis to show that as the lattice's mesh size tends to $0$, the discrete observable converges to a continuous holomorphic function; 3) identify this function uniquely by its boundary values. The results obtained this way include some of the most remarkable breakthroughs in contemporary probability theory.

In this paper we address the second part of the program above, namely, the convergence of discrete harmonic or holomorphic functions to their continuous counterparts. This study has been performed on the square lattice \cite{Courant28} as well as on \emph{rhombic lattices} \cite{ChelkakSmirnov2011}, which are plane graphs such that each inner face is a rhombus. Because not every quadrangulation can be embedded in $\C$ as a rhombic lattice \cite{KS05}, Smirnov asked, ``can we always find another embedding with a sufficiently nice version of discrete complex analysis?'' \cite[Section 6, Question 1]{SmirnovICM}.

A broader class than the rhombic lattices are the \emph{orthodiagonal maps}: plane graphs whose inner faces are quadrilaterals with orthogonal diagonals (see Section \ref{sec:terminology}). One can ask which planar maps are \emph{representable} by a rhombic lattice or by an orthodiagonal map; see Section \ref{sec:orthorep}. For rhombic lattices the answer is the \emph{isoradial graphs}. Unfortunately, these do not include all finite simple triangulations \cite{KS05}. By contrast, every finite simple triangulation has an orthodiagonal representation which can be constructed using circle packing \cite[Remark 5]{M01}. We elaborate on this in Section \ref{sec:orthorep} and observe that the double circle packing theorem \cite{BriSch93,Thurston} provides an orthodiagonal representation for any simple, 3-connected finite planar map.

Skopenkov \cite{S13} proved a convergence result for the Dirichlet problem on orthodiagonal maps under certain local and global regularity conditions. Werness \cite{W15} improved this result to assume only local regularity. See also the work of Dubejko \cite{D99}. All these works require a uniform bound on the vertex degrees of the maps.

The main result of this paper, Theorem \ref{main_thm}, is a convergence statement that has \emph{no} regularity assumptions of any kind. In particular, our result applies even when the vertex degrees are not uniformly bounded. Our only condition is that the maximal edge length of the map tends to $0$. As well, our proof avoids compactness arguments and thus provides an effective bound for the convergence.

Removing the regularity assumptions is not just mathematically pleasing; rather, it provides a framework for the study of discrete complex analysis on \emph{random planar maps} \cite{LeGICM, Miermont14,CurienPeccot}. In order to apply Theorem \ref{main_thm} to a given random map model, one has to verify the maximal edge length condition above.
This condition is believed to hold in all natural random map models, though proving it for a random simple triangulation on $n$ vertices is considered an important open problem (see \cite[Section 6]{LeGICM}). In the companion paper \cite{GGJN19} we show that it indeed holds for the discrete \emph{mating-of-trees} random map model of Duplantier-Gwynne-Miller-Sheffield \cite{DMS14,GMS17}. Hence Theorem \ref{main_thm} can be applied to this model; see \cite{GGJN19}.

There has been a great deal of interest in recent years in studying statistical physics models, such as percolation, Ising/FK and the self-avoiding walk, on random planar maps \cite{AngelPerc,CurienCaraceni,GwynneFK,GwynneMillerSelfAvoiding,GwynneMillerPercolation}. The behavior of these models at their critical temperature is mysteriously related via the KPZ correspondence to their behavior on the usual square or triangular lattices \cite{DS11,Garban13}. A very ambitious program is to rigorously relate the behavior of a statistical physics model in the random planar map setting (where in many cases the model is tractable) to its behavior on a regular lattice. We hope that the framework for discrete complex analysis on random planar maps  that we provide in this paper will be useful for this endeavor.

Below is the statement of our main theorem. Even though some of the notation has not been defined, the conclusion should be clear: the discrete harmonic function is close to the continuous one when the mesh size is small. We have gathered the necessary definitions required to parse this theorem in Section \ref{sec:terminology} below. For the experienced reader, we remark that our discrete harmonic functions are with respect to the canonical edge weights associated with the map rather than unit weights.

\begin{thm} \label{main_thm}
Let $\Omega \subset \R^2$ be a bounded simply connected domain, and let $g: \R^2 \to \R$ be a $C^2$ function. Given $\e, \delta\in (0,\diam(\Omega))$, let $G = (V^\bullet \sqcup V^\circ, E)$ be a finite orthodiagonal map with maximal edge length at most $\e$ such that the Hausdorff distance between $\partial G$ and $\partial \Omega$ is at most $\delta$. Let $h_c: \overline{\Omega} \to \R$ be the solution to the continuous Dirichlet problem on $\Omega$ with boundary data $g$, and let $h_d: V^\bullet \to \R$ be the solution to the discrete Dirichlet problem on $\Int(V^\bullet)$ with boundary data $g|_{\partial V^\bullet}$. Set
\[
C_1 = \sup_{x \in \widetilde{\Omega}} |\nabla g(x)|\, , \qquad C_2 = \sup_{x \in \widetilde{\Omega}} \|Hg(x)\|_2
\]
where $\widetilde{\Omega} = \conv(\overline{\Omega} \cup \widehat{G})$. Then there is a universal constant $C < \infty$ such that for all $x \in V^\bullet \cap \overline{\Omega}$,
\[
|h_d(x) - h_c(x)| \leq \frac{C \diam(\Omega) (C_1 + C_2 \e)}{\log^{1/2}(\diam(\Omega) / (\delta \vee \e))} \, .
\]
\end{thm}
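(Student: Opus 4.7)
My plan is to compare $h_d$ and $h_c$ via the error function $\phi(x) := h_d(x) - g(x)$ on $V^\bullet$. This function vanishes on $\partial V^\bullet$ and satisfies the discrete Poisson equation $\Delta_d \phi = -\Delta_d g$ on $\Int(V^\bullet)$, where $\Delta_d u(x) := \sum_{y \sim x} c_{xy}(u(y) - u(x))$. By the standard probabilistic representation,
\[
\phi(x) = \Exp_x\Bigl[\sum_{t=0}^{\tau-1}\frac{\Delta_d g(X_t)}{\pi(X_t)}\Bigr],
\]
where $(X_t)$ is the canonical weighted random walk on $V^\bullet$ with transition probabilities $c_{xy}/\pi(x)$, $\pi(x) := \sum_{y \sim x} c_{xy}$, and $\tau$ is the exit time. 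The continuous analog is $h_c(x) - g(x) = -\tfrac{1}{2}\Exp_x[\int_0^T \Delta g(B_s)\,ds]$, so the task reduces to matching these two expressions.

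The interior analysis exploits two identities on orthodiagonal maps: at every primal vertex $x$, $\sum_{y \sim x} c_{xy}(y - x) = 0$, so the canonical walk is a coordinate-wise martingale, and $\sum_{y \sim x} c_{xy}(y - x)(y - x)^\top = \mu(x)\, I$, where $\mu(x)$ is the total area of the primal faces meeting $x$. Both identities follow directly from the orthogonality of the diagonals. Combined with Taylor's theorem applied to $g \in C^2$, they yield $\Delta_d g(x) = \tfrac{1}{2}\mu(x) \Delta g(x) + O(C_2 \mu(x) \e)$. Since the ``time change'' $\mu(X_t)/\pi(X_t)$ is bounded by $\e^2$ by Cauchy--Schwarz, inserting this into the representation of $\phi$ produces the $C_2 \e$ term in the final bound.

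The boundary comparison, which I expect to be the crux, relates the discrete exit distribution $\omega_d^x$ on $\partial V^\bullet$ to the continuous harmonic measure $\omega_c^x$ on $\partial \Omega$. Since $|\nabla g| \leq C_1$ on $\widetilde{\Omega}$ and the two boundaries lie within Hausdorff distance $\delta$, a transport-type comparison of $\int g\,d(\omega_d^x - \omega_c^x)$ yields the $C_1$ contribution. The universal rate $\log^{-1/2}(\diam(\Omega)/(\delta \vee \e))$ then reflects a Beurling projection-type estimate: without regularity of $\partial \Omega$, this is the best modulus of continuity available for harmonic extensions of Lipschitz boundary data in an arbitrary bounded simply connected domain.

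The main difficulty is that the orthodiagonal map carries no local regularity: vertex degrees may be unbounded and quadrilateral shapes may be arbitrary. This rules out the compactness and local-central-limit-theorem arguments employed by Skopenkov and Werness under their regularity hypotheses. To obtain the $\log^{-1/2}$ rate uniformly, the proof must employ effective-resistance or extremal-length comparisons tailored to the orthodiagonal embedding, thereby transferring the continuous Beurling-type bounds to the discrete random walk in a way that is robust to arbitrary local geometry.
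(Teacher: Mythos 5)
Your proposal takes a genuinely different route from the paper's, but it contains two gaps that I believe are fatal.

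\textbf{The second-moment identity is false.} You assert that at every $x\in\Int(V^\bullet)$,
\[
\sum_{y\sim x} c_{xy}\,(y-x)(y-x)^\top \;=\; \mu(x)\,I,
\]
and that this ``follows directly from the orthogonality of the diagonals.'' It does not. Writing the faces incident to $x$ as $Q_j=[x,w_j,v_j,w_{j+1}]$, with $\vv_j$ the unit vector in the direction of $v_j-x$, and using $c(e^\bullet_{Q_j}) = |w_j w_{j+1}|/|x v_j|$ together with $\area(Q_j)=\tfrac12|x v_j|\cdot|w_j w_{j+1}|$, one gets
\[
\sum_{y\sim x}c_{xy}(y-x)(y-x)^\top \;=\; \sum_j 2\area(Q_j)\,\vv_j\vv_j^\top .
\]
This matrix always has the correct trace $2\mu(x)$, but there is no reason for it to be isotropic. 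Concretely, put $x$ at the origin with dual neighbours $w_1=(1,0),\,w_2=(0,1),\,w_3=(-1,0),\,w_4=(0,-1)$; orthogonality forces $\vv_j\in\{\pm(1,1)/\sqrt2,\pm(1,-1)/\sqrt2\}$, and taking $v_j = t_j\vv_j$ with $t_1=2,\,t_2=t_3=t_4=1$ gives a valid local configuration whose second-moment matrix has nonzero off-diagonal entry proportional to $t_1-t_2+t_3-t_4$. Since the anisotropic part of this matrix is generically of the same order as the isotropic part, the discrepancy between $\Delta_d g(x)$ and $\tfrac12\mu(x)\Delta g(x)$ is of the same size as the main term, not an $O(\e)$ correction, so your discrete--continuous matching of the source term breaks at order one. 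The only local identity orthodiagonality does give you is the first moment (Proposition~\ref{martingale}). (As a side note, your error $O(C_2\mu(x)\e)$ is also dimensionally off and would require a third-derivative bound, which is unavailable for $g\in C^2$.)

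\textbf{The boundary comparison is the theorem.} The step you flag as ``the crux''---bounding $\bigl|\int g\,d\omega^x_d - \int g\,d\omega^x_c\bigr|$ by a ``transport-type'' argument plus Beurling---is not an ingredient you can invoke; it is equivalent to the statement being proved (take $g$ harmonic and compare $h_d(x)$ with $h_c(x)$). The whole difficulty is to get a quantitative comparison of the discrete exit distribution with continuous harmonic measure that is uniform over orthodiagonal maps with no degree or shape regularity, and nothing in the proposal explains how the discrete Beurling-type input is obtained without such regularity.

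For comparison, the paper's argument deliberately sidesteps both issues. It never passes through a vertex-level approximation $\Delta_d h_c\approx 0$ and never compares harmonic measures. Instead, it works face by face: the discrete gradient $c\,d(h_c|_{V^\bullet})$ on $G^\bullet$ and the flow on $G^\bullet$ built from the harmonic conjugate $\widetilde h_c|_{V^\circ}$ are shown, using Lemma~\ref{orientation} and the Cauchy--Riemann equations, to differ by $O(M\e)$ on each edge, yielding an $L^2$ bound $\EE^\bullet(h_d-h_c)\le 32\area(\widehat G)M^2\e^2$ via the abstract orthogonality decomposition of Proposition~\ref{sandwich} (Proposition~\ref{energy convergence}). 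The $L^\infty$ bound of the theorem is then extracted from this energy bound using two explicit low-energy flows (Propositions~\ref{prop:center-to-outside} and~\ref{prop:left-to-right}); these resistance estimates are what produce the universal $\log^{-1/2}$ rate with no compactness and no regularity hypotheses. That is the point where something like your ``effective-resistance/extremal-length'' idea actually enters, but it is applied to the energy of $h_d-h_c$, not to a harmonic-measure comparison.
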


\noindent {\em Remark.} A consequence of Theorem \ref{main_thm} is that if the sequence $G_n$ of orthodiagonal maps approximates $\Omega$, meaning that the maximal edge length in $G_n$ tends to zero and the boundaries $\partial G_n$ converge to $\partial \Omega$ in a suitable sense, then the exit measure of the weighted random walk defined in Section \ref{sec:terminology} on the primal network of $G_n$ converges to the harmonic measure on $\Omega$. For a precise statement we refer the reader to \cite[Corollary 5.7]{S13}, whose proof can easily be modified to use Theorem \ref{main_thm} in place of \cite[Theorem 1.2]{S13} and thereby remove the local and global regularity conditions on the maps $G_n$. \\

\subsection{Notations and terminology}\label{sec:terminology} The closure, boundary, and diameter of $\Omega$ are denoted by $\overline{\Omega}$, $\partial \Omega$, and $\diam(\Omega)$. The convex hull of a set $S$ is $\conv(S)$. By $\nabla g$ and $Hg$ we mean the gradient and the Hessian matrix of $g$. The notation $|\nabla g|$ means the Euclidean norm, and the notation $\|Hg\|_2$ means the $L^2 \to L^2$ operator norm (which is also the spectral radius since $Hg$ is symmetric). The Hausdorff distance between two sets $S,T$ is the infimum of all $r > 0$ such that each $s \in S$ is within distance $r$ from some element of $T$ and each $t \in T$ is within distance $r$ from some element of $S$. The solution to the continuous Dirichlet problem on $\Omega$ with boundary data $g$ is the unique continuous function $h_c: \overline{\Omega} \to \R$ such that $h_c = g$ on $\partial \Omega$ and $h_c$ is harmonic on $\Omega$.

A plane graph is a graph $G = (V,E)$ with a fixed proper embedding in the plane. We frequently identify the vertices and edges of the graph with the points and curves in the plane of the embedding.
The faces of a finite plane graph $G$ are the connected components of $\R^2$ minus the edges and vertices of $G$. All but one of the faces are bounded; these are called inner faces, and the unbounded face is called the outer face. The degree of a face is the number of edges in its boundary (with an edge counted twice if the face borders it from both sides).
\begin{defn}
A finite \textbf{orthodiagonal map} is a finite connected plane graph in which:
\begin{itemize}
\item Each edge is a straight line segment;
\item Each inner face is a quadrilateral with orthogonal diagonals; and
\item The boundary of the outer face is a simple closed curve.	
\end{itemize}
\noindent We allow non-convex quadrilaterals, whose diagonals do not intersect. See Figure \ref{orthomap}. Orthodiagonal maps are called ``orthogonal lattices'' by \cite{S13,W15} and ``semi-critical maps'' by \cite{M01}. The requirement that the boundary of the outer face must be simple is not severe: in case $G$ satisfies the other conditions, we may consider the blocks of $G$ (i.e.\ maximal $2$-connected components) separately. For each block $H$, the boundary of its outer face is a simple closed curve \cite[Proposition 4.2.5]{D17} and its inner faces are all inner faces of $G$ (by Lemma \ref{block faces}, proved below), so $H$ is an orthodiagonal map.
\end{defn}

\begin{figure}
\centering
\includegraphics[width=.4\textwidth]{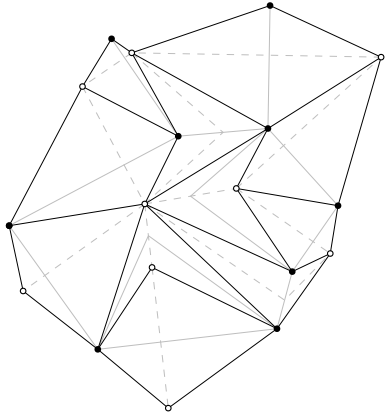}
\caption{An orthodiagonal map. Primal vertices are represented by solid disks $\bullet$ and dual vertices by hollow disks $\circ$. The edges of the orthodiagonal map itself are drawn in black. Edges of the primal graph are drawn with solid gray lines, and edges of the dual graph are drawn with dashed gray lines.}
\label{orthomap}	
\end{figure}

Every finite orthodiagonal map $G$ is a bipartite graph: if it contained an odd cycle, then there would be a face of odd degree inside the cycle. Thus we may write $G = (V^\bullet \sqcup V^\circ, E)$ where $V^\bullet \sqcup V^\circ$ is the bipartition of the vertices. We will use the notation $Q = [v_1,w_1,v_2,w_2]$ to denote an inner face of $G$. This means that the boundary of $Q$ passes in order through the vertices $v_1,w_1,v_2,w_2$ when traversed counterclockwise, and that $v_1,v_2 \in V^\bullet$ while $w_1,w_2 \in V^\circ$. For each inner face $Q = [v_1,w_1,v_2,w_2]$ of $G$, we draw a \textbf{primal edge} $e^\bullet_Q$ between $v_1$ and $v_2$ and a \textbf{dual edge} $e^\circ_Q$ between $w_1$ and $w_2$, as follows. If the line segment $v_1 v_2$ is contained inside $Q$ (except for the endpoints), then $e^\bullet_Q$ is this segment. If not, we let $p$ be the midpoint of the segment $w_1 w_2$ and draw $e^\bullet_Q$ as the union of the segments $v_1 p$ and $p v_2$. Similarly, $e^\circ_Q$ is either $w_1 w_2$, if that segment is contained in $Q$, or $w_1 q \cup q w_2$ otherwise, where $q$ is the midpoint of $v_1 v_2$. By this construction, $e^\bullet_Q$ and $e^\circ_Q$ are contained in $Q$ and intersect exactly once. The primal and dual edges are drawn in gray in Figure \ref{orthomap}.

The \textbf{primal graph} associated with $G$ is $G^\bullet = (V^\bullet, E^\bullet)$, where $E^\bullet = \bigcup_Q e_Q^\bullet$. The \textbf{dual graph} is $G^\circ = (V^\circ, E^\circ)$, where $E^\circ = \bigcup_Q e_Q^\circ$. These are plane graphs, not necessarily simple, and $e_Q^\bullet \leftrightarrow e_Q^\circ$ is a bijection between $E^\bullet$ and $E^\circ$. Despite the names, $G^\bullet$ and $G^\circ$ are not quite plane duals, as we will see. It is not difficult to see (Lemma \ref{lem:connected}) that $G^\bullet$ and $G^\circ$ are connected.

The \textbf{boundary} of $G$, denoted $\partial G$, is the (topological) boundary of its outer face. The \textbf{interior} of $G$, denoted $\Int(G)$, is the open subset of $\R^2$ enclosed by $\partial G$, so that $\widehat{G} := \Int(G) \cup \partial G$ is the union of the closures of the inner faces. The \textbf{boundary vertices} of $G^\bullet$ and $G^\circ$ are $\partial V^\bullet = \partial G \cap V^\bullet$ and $\partial V^\circ = \partial G \cap V^\circ$. The \textbf{interior vertices} of $G^\bullet$ and $G^\circ$ are $\Int(V^\bullet) = \Int(G) \cap V^\bullet = V^\bullet \setminus \partial V^\bullet$ and $\Int(V^\circ) = \Int(G) \cap V^\circ = V^\circ \setminus \partial V^\circ$. The graphs $G^\bullet$ and $G^\circ$ are nearly plane duals of each other, except that the outer face of $G^\circ$ contains all the boundary vertices of $G^\bullet$, and the outer face of $G^\bullet$ contains all the boundary vertices of $G^\circ$. When exact duality is required in Section \ref{Equicontinuity section}, we will consider augmented versions of $G^\bullet$ and $G^\circ$.

Orthodiagonal maps come with a ``conformally natural'' set of positive edge weights, which we now define. These weights were defined by Duffin \cite{D68} and independently by Dubejko \cite{D97} and are intimately related to discrete holomorphic functions; we discuss this in Section \ref{sec:previousresults}.

\begin{defn}
Let $Q = [v_1,w_1,v_2,w_2]$ be an inner face of an orthodiagonal map $G = (V^\bullet \sqcup V^\circ, E)$. The conductances of the primal edge $e_Q^\bullet$ and the dual edge $e_Q^\circ$ are given by
\begin{equation} \label{conductances}
c(e^\bullet_Q) = \frac{|w_1 w_2|}{|v_1 v_2|} \, , \qquad c(e^\circ_Q) = \frac{|v_1 v_2|}{|w_1 w_2|} \, .
\end{equation}
We emphasize that the weights $c(e^\bullet_Q)$, $c(e^\circ_Q)$ are determined by the Euclidean distance
between the endpoints of the edges even when $e^\bullet_Q$ or $e^\circ_Q$ is ``bent'' (when $Q$ is concave).
\end{defn}

The \textbf{primal network} and \textbf{dual network} associated with $G$ are $(G^\bullet, c)$ and $(G^\circ, c)$, with edge conductances $c$ as above. A function $h: V^\bullet \to \R$ is called \textbf{discrete harmonic} at $v_1 \in V^\bullet$ if
\begin{equation}\label{def:discreteharmonicortho} h(v_1) \sum_{Q=[v_1,w_1,v_2,w_2]} c(e^\bullet_Q) =  \sum_{Q=[v_1,w_1,v_2,w_2]} c(e^\bullet_Q) h(v_2) \, ,\end{equation}
where the sum is over all faces $Q$ which are incident to $v_1$. (The notion of a discrete harmonic function on a network will be discussed in full generality in Section \ref{Electric networks}.) Duffin \cite{D68} and indepedently Dubejko \cite{D97} observed that the function assigning to each vertex its horizontal or vertical coordinate is discrete harmonic on $\Int(V^\bullet)$; in other words, the random walk on the primal network of an orthodiagonal map is a martingale on its interior vertices. See Proposition \ref{martingale}. This fundamental property is the reason the weights \eqref{conductances} are canonical.

The solution to the discrete Dirichlet problem on $\Int(V^\bullet)$ with boundary data $g|_{\partial V^\bullet}$ is the unique function $h_d: V^\bullet \to \R$ such that $h_d = g$ on $\partial V^\bullet$ and $h_d$ is discrete harmonic on $\Int(V^\bullet)$. It is well-known and not difficult that such a solution always exists and is unique (Proposition \ref{discrete harmonic}).

\subsection{Previous work}\label{sec:previousresults}
The study of discrete harmonic functions on the square lattice is classical. We mention only the important paper of Courant, Friedrichs and Lewy \cite{Courant28} which proved that the solutions to the Dirichlet problem for finer and finer discretizations of an elliptic operator converge to the appropriate continuous solution as the mesh size tends to zero.

Our results have a natural interpretation in terms of discrete analytic functions, which were first studied on the square lattice by Isaacs \cite{I41} and Lelong-Ferrand \cite{F44,LF55}. Duffin \cite{D68} showed that much of the theory generalizes nicely to the setting of rhombic lattices, which we recall are plane graphs whose inner faces are all rhombi. Since rhombic lattices are a subclass of orthodiagonal maps, we may describe them using the terminology from Section \ref{sec:terminology}.

Let $Q = [v_1,w_1,v_2,w_2]$ be an inner face of an orthodiagonal map $G = (V^\bullet \sqcup V^\circ, E)$. Duffin calls a function $f: V^\bullet \sqcup V^\circ \to \C$ {\textbf{discrete analytic}} at $Q$ if
\[
\frac{f(v_2) - f(v_1)}{v_2 - v_1} = \frac{f(w_2) - f(w_1)}{w_2 - w_1} \, .
\]
We say that $f$ is discrete analytic on $G$ if it is discrete analytic at every inner face of $G$. This definition of discrete analyticity turns out to be very natural and applicable, see \cite{SmirnovICM}.

The connection to this paper is Duffin's observation \cite{D68} that the real part of any discrete analytic function on $G$, restricted to $V^\bullet$, is discrete harmonic on $\Int(V^\bullet)$ with respect to the canonical weights \eqref{conductances}, i.e., it satisfies \eqref{def:discreteharmonicortho}. (Duffin proved this statement for rhombic lattices but recognized that the proof carries over to the more general setting of orthodiagonal maps.) Thus, convergence statements such as Theorem \ref{main_thm} can be used to prove convergence statements for discrete analytic functions to their continuous counterparts. \\

Around the turn of the millennium, it was recognized by Mercat \cite{M98,M01}, Kenyon \cite{K00,K02} and Smirnov \cite{SmirnovPerc} that discrete analytic functions could be used to prove important properties of probabilistic models such as conformal invariance. This discovery led to a rejuvenation of the field of discrete complex analysis. Like Duffin, Mercat \cite{M01} and Kenyon \cite{K02} selected rhombic lattices as the ideal ground on which to develop a discrete analogue to the familiar continuous theory. Mercat referred to rhombic lattices as ``critical maps,'' while Kenyon used the term ``isoradial embedding'' to describe the graphs $G^\bullet$ and $G^\circ$ associated with a rhombic lattice $G$. Both ``rhombic lattice'' and ``isoradial graph'' are now standard terms in the literature. Mercat also considered ``semi-critical maps,'' which are essentially the same as our orthodiagonal maps, and observed that they can be generated from circle packings of triangulations by the procedure we described in Proposition \ref{prop:orthodiagonal-triangulation}.

On rhombic lattices, convergence of solutions to the discrete Dirichlet problem was proved by Chelkak and Smirnov \cite{ChelkakSmirnov2011} under a local regularity assumption, namely, that each rhombus's angles are bounded away from $0$ and $\pi$ uniformly. (Of course, the requirement that all edges of a rhombic lattice have the same length is a very strong form of global regularity.) Skopenkov \cite{S13} proved a similar convergence result for orthodiagonal maps (which he calls ``orthogonal lattices''). Werness \cite{W15} succeeded in removing the global regularity assumption from Skopenkov's result at the mild cost of slightly changing the local regularity assumption. It is in this context that we prove Theorem \ref{main_thm}, which is strictly stronger than the convergence results of Skopenkov and Werness in that it removes all regularity assumptions and provides a quantitative bound. It also follows from our result that one can drop the local regularity condition of Chelkak-Smirnov \cite{ChelkakSmirnov2011} in their statements on convergence of discrete harmonic functions, that is, Proposition 3.3 and Theorem 3.10 in \cite{ChelkakSmirnov2011}. (Note however that we prove only $C^0$ convergence rather than $C^1$ convergence and that we require smooth boundary data.) \\

Lastly, we wish to point out the papers \cite{D97,D99} by Dubejko. While unaware of Duffin's results \cite{D68}, Dubejko \cite{D97} rediscovered the weights \eqref{conductances} and observed that the function assigning to each vertex its horizontal or vertical coordinate is discrete harmonic on $\Int(V^\bullet)$. In \cite{D99} Dubejko proved a result similar to Skopenkov's \cite{S13}, that is, convergence on orthodiagonal maps of the solution of the discrete Dirichlet problem to its continuous counterpart assuming global and local regularity assumptions, although his regularity assumptions are strictly stronger than Skopenkov's. Other differences are that Dubejko requires that the graph $G^\bullet$ be a triangulation and that the quadrilateral faces of $G$ be convex. On the other hand, Dubejko treats the Dirichlet problem for Poisson's equation and not merely Laplace's equation.

\subsection{Organization of paper and proof overview}\label{sec:proofoverview} In Section \ref{sec:orthorep} below we show how circle packing and double circle packing can be used to generate orthodiagonal representations of planar maps. This is well-known \cite[Remark 5]{M01} in the case of finite planar triangulations, and likely also its generalization using double circle packing for finite 3-connected planar maps, though we were unable to find the latter in the literature. We provide the full details in Section \ref{sec:orthorep} for completeness, noting that this section is independent of the rest of the paper.

In Sections \ref{preliminaries} through \ref{main proof} we prove Theorem \ref{main_thm}. The proof uses the theory of electric networks. We consider the difference $h_d - h_c$ as a function on the vertex set $V^\bullet$. There are two main steps: first we obtain an $L^2$ bound (i.e., an energy bound) on $h_d - h_c$, and then we upgrade it to the $L^\infty$ bound in the statement of the theorem. We begin in Section \ref{preliminaries} with some preliminary lemmas, a few of which fill in details to justify assertions that we made in Section \ref{sec:terminology}.

To prove the energy bound, Proposition \ref{energy convergence}, we use $h_c$ to construct two functions on the edges of the graph $G^\bullet$. The first function is the discrete gradient in $G^\bullet$ of the restriction of $h_c$ to $V^\bullet$. The second function is defined by restricting the harmonic conjugate of $h_c$ to the dual vertices $V^\circ$, taking the discrete gradient in $G^\circ$, and finally recovering a function on the edges of $G^\bullet$ using duality. A direct computation shows that these two functions are close to each other. It turns out that this immediately implies the energy bound on $h_d - h_c$, for a reason coming from the abstract theory of electric networks with multiple sources and sinks. This theory is a slight generalization of the linear-algebraic formulation of electric networks developed in \cite{BLPS01}. We first build up the abstract theory in Section \ref{Electric networks} and then present the argument above in Section \ref{Energy convergence}.

To finish the proof of Theorem \ref{main_thm}, the main ingredients are two electric resistance estimates which are proved in Section \ref{sec:resistance}. One of these estimates is used in Section \ref{Equicontinuity section} to prove a smoothness result for $h_d$, Proposition \ref{equicontinuity}. With this statement in hand, we proceed to the proof of Theorem \ref{main_thm} in Section \ref{main proof}. Here is an outline of the argument. Suppose for the sake of contradiction that there is a vertex $x \in \Int(V^\bullet)$ away from the boundary for which $|h_d(x) - h_c(x)|$ is large. By the smoothness of $h_d$ proved in Proposition \ref{equicontinuity} (the smoothness of $h_c$ is automatic), we deduce that there is a disk of radius order $\e$ around $x$ in which $|h_d-h_c|$ is large. Then, the other resistance estimate from Section \ref{sec:resistance} implies that the energy of $h_d - h_c$ must be large. This contradicts Proposition \ref{energy convergence}, so we conclude that there is a uniform bound on $|h_d - h_c|$.

\section{Orthodiagonal representations of planar maps}\label{sec:orthorep}

A planar map is a graph that can be properly embedded in the plane along with a specification for each vertex $v$ of the clockwise order of the edges incident to $v$. This specification determines the faces of the map, see \cite[Ch.~3]{NStFlour18} for details and further background. A {\bf finite triangulation with boundary} is a finite connected planar map in which all faces are triangles except for a distinguished outer face whose boundary is a simple cycle.

We say that an orthodiagonal map $G = (V^\bullet \sqcup V^\circ, E)$ is an {\bf orthodiagonal representation} of a planar map $H$ if $G^\bullet$ and $H$ are isomorphic as planar maps; when $H$ has a distinguished outer face, we require the outer face of $G^\bullet$ to correspond to it under the isomorphism. To apply the results of this paper, it is natural to find conditions on $H$ guaranteeing the existence of an orthodiagonal representation.

For a finite simple triangulation with boundary, it is well-known that an orthodiagonal representation can be constructed using circle packing. A {\bf circle packing} of a simple connected planar map $H$ with vertex set $W$ is a collection $\mcl P = \{C_w\}_{w \in W}$ of circles in the plane with disjoint interiors such that $C_w$ is tangent to $C_{w'}$ if and only if $w$ is adjacent to $w'$ in $H$. The packing $\mcl P$ induces an embedding of $H$ in the plane in which each vertex $w$ is drawn at the center of $C_w$ and adjacent vertices are connected by straight lines; as part of the definition, we require this embedding to respect the planar map structure of $H$, including the outer face if one has been chosen. Koebe's \cite{K36} \emph{circle packing theorem} (see also \cite[Ch.~3]{NStFlour18} and \cite{St05}) states that every finite simple connected planar map has a circle packing.

The proof of the next proposition shows how to build an orthodiagonal representation out of a circle packing of a finite simple triangulation with boundary. As previously mentioned, this is a well-known construction, see \cite[Remark 5]{M01} and \cite[Section 1.2]{W15}.

\begin{prop} \label{prop:orthodiagonal-triangulation}
Let $H$ be a finite simple triangulation with boundary and let $\mcl P$ be a circle packing of $H$. Then there is an orthodiagonal representation of $H$ whose primal graph coincides with the straight-line embedding of $H$ induced by $\mcl P$.
\end{prop}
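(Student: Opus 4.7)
The construction produces $G$ directly from the tangency structure of $\mcl P$. Place the primal vertex set $V^\bullet$ at the centers of the circles of $\mcl P$, so that $G^\bullet$ automatically has the correct vertex set and straight-line embedding. For each inner face $F = \{a, b, c\}$ of $H$, the three circles $C_a, C_b, C_c$ are pairwise tangent, and the common tangent line at each pairwise tangency point is the radical axis of the corresponding pair of circles. By the classical concurrence of radical axes, these three common tangent lines meet at a single point (the radical center of the triple), which lies in the interior of the triangle $abc$ of circle centers; I take this point as the interior dual vertex $w_F$. For each boundary edge $e = uv$ of $H$, with tangency point $p_{uv}$ of $C_u$ and $C_v$, I place a boundary dual vertex $w_e$ on the common tangent line to $C_u, C_v$ at $p_{uv}$, on the side of the segment $uv$ opposite to the adjacent interior triangle, at a small outward displacement from $p_{uv}$ to be specified. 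The edges of $G$ are: for each inner face $F = \{a, b, c\}$ of $H$, the three segments $a w_F, b w_F, c w_F$; and for each boundary edge $e = uv$ of $H$, the two segments $u w_e, v w_e$.

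Next, I verify the orthodiagonal axioms. The inner faces of $G$ are in bijection with the edges of $H$: an interior edge $uv$ shared by faces $F_1, F_2$ of $H$ yields the quadrilateral $Q = [u, w_{F_1}, v, w_{F_2}]$, and a boundary edge $uv$ adjacent to face $F$ yields $Q = [u, w_F, v, w_e]$. In both cases the two dual vertices of $Q$ lie on the common tangent line to $C_u, C_v$, which is perpendicular to the segment $uv$ at $p_{uv}$, and on opposite sides of the line $uv$. Consequently each inner face is a convex kite, the two diagonals of $Q$ are orthogonal, and the primal diagonal $uv$ lies in the interior of $Q$; by the definition of the primal graph, the primal edge $e^\bullet_Q$ is drawn as the straight segment $uv$. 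Therefore $G^\bullet$ is precisely the straight-line embedding of $H$ induced by $\mcl P$, with the correct cyclic order of edges at each vertex inherited from the cyclic order of adjacent faces in $H$.

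The main obstacle is to choose the boundary dual vertices so that the outer boundary of $G$ is a simple closed curve without any edge crossings. I plan to handle this by fixing an outward-pointing unit vector at each boundary tangency point $p_{uv}$ along the corresponding perpendicular direction, and setting $w_e = p_{uv} + \lambda \cdot (\text{outward unit vector})$ for a single small constant $\lambda > 0$. For $\lambda$ sufficiently small (quantified in terms of the minimum distance between non-adjacent geometric features of the configuration), the external kites become thin slivers hugging $\partial H$ from the outside; they are pairwise disjoint and meet neither the interior of $H$ nor any other edge of $G$. The outer boundary of $G$ then traces, in the cyclic order inherited from $\partial H$, the boundary primal vertices of $H$ interleaved with the boundary dual vertices $w_e$, yielding a simple closed curve. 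Connectedness of $G$ and the identification of its outer face with the outer face of $H$ follow at once, completing the construction.
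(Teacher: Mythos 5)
Your construction is correct and is essentially the paper's proof: primal vertices at circle centers, interior dual vertices at the common intersection of the perpendicular tangent lines, boundary dual vertices nudged slightly outward along the remaining tangent lines. The only cosmetic difference is that you identify the interior dual vertex as the radical center of the three mutually tangent circles, whereas the paper identifies it as the center of the triangle's inscribed circle (observing that the inscribed circle of the triangle of centers is tangent to each side exactly at the pairwise tangency point of the packing circles); these descriptions pick out the same point, so the two justifications are interchangeable. One small inaccuracy: the inner faces of $G$ are convex quadrilaterals with orthogonal diagonals, but they are generally \emph{not} kites --- $|u\,w_{F_1}| = |v\,w_{F_1}|$ would force $C_u$ and $C_v$ to have equal radii, and the other pairing likewise forces the two incircle radii to coincide. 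Since your argument only uses convexity and orthogonality of the diagonals (to conclude that $e^\bullet_Q$ is the straight segment $uv$), this slip is harmless, but you should drop the word ``kite.''
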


\begin{proof}
The argument below is illustrated by Figure \ref{circlepack}.

\begin{figure}
\centering
\begin{subfigure}{0.3\textwidth}
  \centering
  \includegraphics[width=1\textwidth]{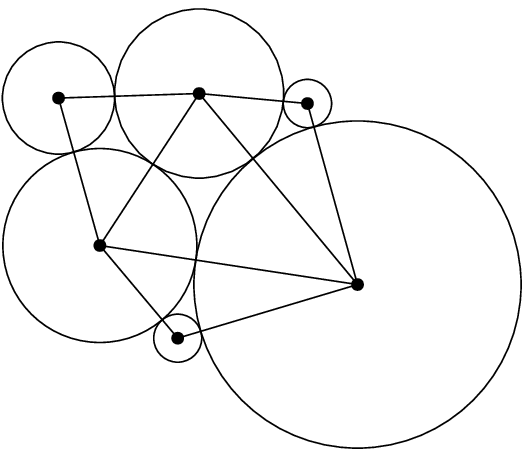}
  \caption{Circle packing of a finite simple triangulation with boundary.}
  \label{circlepack-1}
\end{subfigure}
\quad
\begin{subfigure}{0.3\textwidth}
  \centering
  \includegraphics[width=1\textwidth]{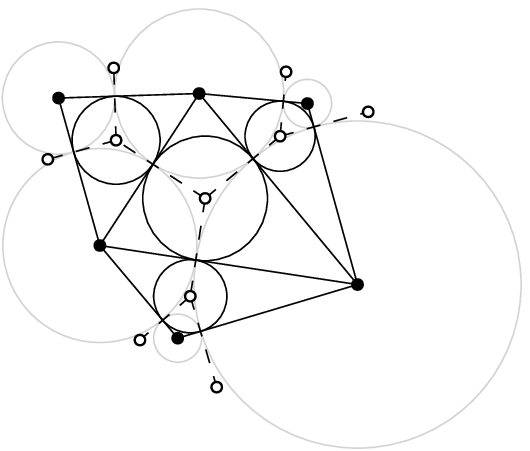}
  \caption{Inscribed circles of the inner faces along with the points $\{c_f\}_{f \in F}$, $\{p_e\}_{e \in B}$.}
  \label{circlepack-2}
\end{subfigure}
\quad
\begin{subfigure}{0.3\textwidth}
  \centering
  \includegraphics[width=1\textwidth]{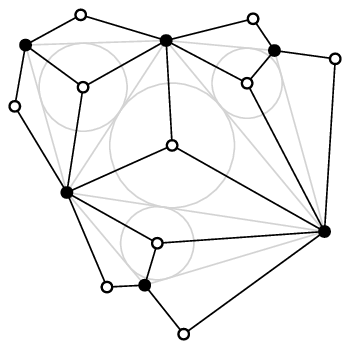}
  \caption{Orthodiagonal representation of the triangulation with boundary.}
  \label{circlepack-3}
\end{subfigure}
\caption{Constructing an orthodiagonal representation of a finite simple triangulation with boundary via circle packing.}
\label{circlepack}	
\end{figure}

Write $\mcl P = \{C_w\}_{w \in W}$, where $W$ is the vertex set of $H$. Let $F$ denote the set of inner faces (which are all triangles) of the straight-line embedding of $H$ induced by $\mcl P$. For each $f \in F$, let $C_f$ be the inscribed circle of the triangle $f$. Denote the centers of the circles $C_w, C_f$ by $c_w, c_f$.

The key geometric fact underlying the construction is this. If $e$ is an edge of a face $f \in F$, with endpoints $c_w, c_{w'}$ for $w,w' \in W$, then the tangency point of $C_f$ with $e$ is the same as the tangency point of $C_w$ with $C_{w'}$. We label this point $q_e$. It follows that if $e$ is incident to two faces $f,f' \in F$, then the line segment $c_f c_{f'}$ passes through $q_e$ and is orthogonal to $e$. Hence the quadrilateral $c_w c_f c_{w'} c_{f'}$ has orthogonal diagonals.

We must do a little extra work at the boundary. Let $B$ be the set of edges in the straight-line embedding of $H$ that are incident to the outer face. Each $e \in B$ is also an edge of an inner face $f \in F$, and the line segment $c_f q_e$ is orthogonal to $e$. We extend this segment a short distance past $q_e$ into the outer face and label the new endpoint $p_e$. If $c_w, c_{w'}$ are the endpoints of $e$, then the quadrilateral $c_w c_f c_{w'} p_e$ has orthogonal diagonals. By doing this, we have carved out a triangular region $c_w p_e c_{w'}$ from the outer face; we make the extensions short enough that the triangular regions associated with different edges in $B$ are pairwise disjoint.

We can now define the orthodiagonal representation $G = (V^\bullet \sqcup V^\circ, E)$ of $H$ by
\[
V^\bullet = \{ c_w : w \in W \}\,, \qquad V^\circ = \{ c_f : f \in F \} \cup \{ p_e : e \in B \}
\]
and $E = E_1 \cup E_2$ where
\begin{align*}
E_1 &= \{ c_w c_f : w \in W,\, f \in F,\, \text{$c_w$ is a vertex of $f$} \} \\
E_2 &= \{ c_w p_e : w \in W,\, e \in B,\, \text{$c_w$ is an endpoint of $e$} \} \, .
\end{align*}
The inner faces of $G$ are quadrilaterals of the form $c_w c_f c_{w'} c_{f'}$ or $c_w c_f c_{w'} p_e$, which have orthogonal diagonals as discussed above. The other required properties are easy to check.
\end{proof}

Every finite simple triangulation with boundary can be circle packed so that the circles corresponding to vertices of the outer face are internally tangent to the unit circle $\partial \DD = \{z : |z|=1\}$ and all other circles are contained in the unit disk $\DD = \{z : |z| < 1\}$ \cite[Claim~4.9]{NStFlour18}. We call this a ``circle packing in $\DD$.''
Given such a packing, the orthodiagonal representation in Proposition \ref{prop:orthodiagonal-triangulation} is determined except for the locations of the extra vertices $p_e$, which may be placed arbitrarily close to their corresponding boundary edges $e$. The following corollary provides conditions under which Theorem \ref{main_thm} may profitably be applied with $\Omega = \DD$.

\begin{cor} \label{cor:D-approx}
Consider a circle packing in $\DD$ of a finite simple triangulation with boundary. Assume that one of the circles in the packing is centered at the origin. Then the associated orthodiagonal map in Proposition \ref{prop:orthodiagonal-triangulation}, which we label $G$, can be drawn such that:
\begin{enumerate}[label=(\roman*)]
\item If the maximum radius among all the circles in the packing is at most $\e$, then the maximal edge length of $G$ is at most $2\e$.
\item If the maximum radius among the circles internally tangent to $\partial \DD$ is at most $\delta$, then the Hausdorff distance between $\partial G$ and $\partial \DD$ is at most $2\delta$.
\end{enumerate}
\end{cor}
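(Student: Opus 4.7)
The plan is to exploit the freedom in choosing the extension distance $d_e := |p_e - q_e|$ for each boundary edge $e$, making it small enough to satisfy both (i) and (ii) simultaneously.

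For (i), the edges of $G$ are of two types. For an edge $c_w c_f$, inscribed-circle geometry in the triangle with vertices $c_w, c_{w'}, c_{w''}$ (Heron's formula together with the tangent-length relation $s - a = r_w$) gives
\[
|c_w - c_f|^2 = r_w^2 + r_f^2 = \frac{r_w(r_w + r_{w'})(r_w + r_{w''})}{r_w + r_{w'} + r_{w''}} \le 4\e^2
\]
when all radii are at most $\e$. For an edge $c_w p_e$, by construction $c_w q_e p_e$ is a right triangle with legs $r_w$ and $d_e$, so $|c_w - p_e|^2 = r_w^2 + d_e^2$; enforcing $d_e \le \sqrt{3}\,\e$ keeps this at most $4\e^2$.

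For (ii), the computational core is the identity $|q_e|^2 = 1 - 4 r_w r_{w'}/(r_w + r_{w'})$ for boundary edges $e$ with endpoints $c_w, c_{w'}$, obtained by expanding $|q_e|^2$ using $|c_w| = 1 - r_w$, $|c_{w'}| = 1 - r_{w'}$, $|c_w - c_{w'}| = r_w + r_{w'}$. The AM--GM inequality $(r_w + r_{w'})^2 \ge 4 r_w r_{w'}$ then gives $4 r_w r_{w'}/(r_w + r_{w'}) \le r_w + r_{w'} \le 2\delta$, so $1 - |q_e| \le 1 - |q_e|^2 \le 2\delta$ (strictly for $\delta < 1/2$). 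For the forward direction $\partial G \to \partial \DD$: boundary vertices $c_w$ satisfy $\dist(c_w, \partial \DD) = r_w \le \delta$, and choosing $d_e \le 2\delta - (1 - |q_e|)$ ensures $\dist(p_e, \partial \DD) \le 2\delta$. The main geometric obstacle is bounding $\dist(z, \partial \DD)$ for interior points $z$ of the edge $c_w p_e$: the naive triangle-inequality bound from the endpoint $c_w$ gives only $\le 2 r_w + d_e$, which can exceed $2\delta$ when $r_w = \delta$. I would resolve this via the containment $c_w p_e \subset \overline{C_w} \cup B(q_e, d_e)$, verified by a short local-coordinate calculation in the right triangle $c_w q_e p_e$. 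Any $z \in \overline{C_w}$ lies within $2 r_w \le 2\delta$ of $\partial \DD$ (since $\overline{C_w} \subset \overline{\DD}$ is tangent at a single point, so $|z| \ge 1 - 2 r_w$); any $z \in B(q_e, d_e)$ lies within $(1 - |q_e|) + d_e \le 2\delta$.

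For the reverse direction $\partial \DD \to \partial G$: the assumption that a circle of $\mcl P$ is centered at the origin places $0 \in \Int(G)$ (as this circle is not tangent to $\partial \DD$, its vertex is interior to $G^\bullet$). For any $x \in \partial \DD$, the ray from $0$ through $x$ exits $\Int(G)$ at some first point $y = t^* x \in \partial G$; collinearity with the origin yields $|x - y| = |1 - t^*| = |1 - |y|| = \dist(y, \partial \DD) \le 2\delta$ by the forward direction. Choosing $d_e$ to be the minimum of $\sqrt{3}\,\e$, $2\delta - (1 - |q_e|)$, and a quantity small enough to keep the boundary triangles $c_w p_e c_{w'}$ pairwise disjoint (as in the proof of Proposition \ref{prop:orthodiagonal-triangulation}) completes the argument.
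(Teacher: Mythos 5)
Your proof is correct in its essentials, though the route to part (ii) is noticeably more computational than the paper's. The paper avoids any per-edge computation of $|q_e|$: it observes that every point on a boundary edge $c_w c_{w'}$ of the triangulation lies in $\overline{C_w} \cup \overline{C_{w'}}$ and hence has modulus at least $1-2\delta$, so the union $S$ of the closed triangles (which contains the origin) must contain the disk $\{|z| \le 1-2\delta\}$; since each edge $c_w p_e$ of $\partial G$ lies in $\DD \setminus \Int(S)$, one gets $\partial G \subset \{1-2\delta \le |z| < 1\}$ in one stroke. Your approach replaces this with the explicit identity $|q_e|^2 = 1 - 4r_w r_{w'}/(r_w + r_{w'})$ and the decomposition $c_w p_e \subset \overline{C_w} \cup \overline{B(q_e,d_e)}$. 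Both are valid; the paper's argument is shorter and does not need a case-by-case bound for interior points of $c_w p_e$, while yours has the side benefit of exhibiting the exact modulus of the tangency points. The containment $c_w p_e \subset \overline{C_w} \cup \overline{B(q_e, d_e)}$, which you assert but do not check, does in fact hold: parametrizing $z = ((1-t)r_w, t d_e)$ in coordinates where $q_e = 0$, $c_w = (r_w,0)$, $p_e = (0,d_e)$, the two thresholds $t \le r_w/\sqrt{r_w^2 + d_e^2}$ and $t \ge (r_w^2 - d_e^2)/(r_w^2 + d_e^2)$ always overlap because $(r_w^2 - d_e^2) \le r_w\sqrt{r_w^2 + d_e^2}$ is equivalent to $d_e^2 \le 3 r_w^2$ when $d_e < r_w$, which is automatic, and is trivial when $d_e \ge r_w$.

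There is one genuine omission: you never impose the constraint $p_e \in \DD$, which the paper states up front and which your argument needs. Your reverse-direction step requires that $x \in \partial\DD$ lies outside $\widehat{G}$, so that the segment $0x$ must cross $\partial G$ at a point $y = t^* x$ with $t^* \le 1$. This holds because the faces of $G$ in this construction are convex quadrilaterals (the diagonals cross at an interior point $q_e$), so $\widehat{G} \subset \DD$ once all vertices are in $\DD$ — but that fails if some $p_e$ sits on or outside $\partial\DD$. Your constraint $d_e \le 2\delta - (1-|q_e|)$ does not imply $d_e < 1 - |q_e|$ in general (it only does when $\delta < 1 - |q_e|$), so you should add $d_e < 1-|q_e|$ to your list of constraints on $d_e$. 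The further point you flag — that $2\delta - (1-|q_e|)$ may vanish when $\delta \ge 1/2$ — is real but harmless, since in that regime the claimed bound $2\delta$ exceeds $\sup_{z \in \overline{\DD}} \dist(z,\partial\DD) = 1$ and the forward direction is trivial; the reverse direction is then handled by the same ray argument without any constraint on $d_e$ beyond $p_e \in \DD$.
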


The condition that one of the circles is centered at the origin can be satisfied by applying a M\"{o}bius transformation. Alternatively, the conclusion of Corollary \ref{cor:D-approx} still holds (with the same proof) as long as one of the circle centers is contained in the open disk $\{z : |z| < 1 - 2\delta\}$.

\begin{proof}
We use the notation from the proof of Proposition \ref{prop:orthodiagonal-triangulation}. We place the vertices $p_e$ so that they are all inside $\DD$ and so that each length $|q_e p_e| \leq \e$.

Given an edge $c_w c_f$ of $G$, let $e$ be an edge of $f$ that is incident to $c_w$. Then the segment $c_w q_e$ is a radius of the circle $C_w$, and the segment $c_f q_e$ is a radius of $C_f$. Since the circle $C_f$ is inscribed in a triangle whose side lengths are at most $2\e$, its radius is no more than $\e$ (in fact it is bounded by $\e / \sqrt{3}$). Hence $|c_w c_f| \leq |c_w q_e| + |q_e c_f| \leq \e + \e$. Given an edge $c_w p_e$ of $G$, we have $|c_w p_e| \leq |c_w q_e| + |q_e p_e| \leq \e + \e$. This verifies (i).

For (ii), let $S$ be the union of the closures of the inner faces $f \in F$. Thus $S$ is a closed set whose boundary $\partial S$ is the union of the edges $e \in B$. Each $e \in B$ is a segment $c_w c_{w'}$ of length at most $2\delta$, where both $|c_w|, |c_{w'}| \geq 1-\delta$. Hence every point $z \in \partial S$ satisfies $|z| \geq 1-2\delta$. As $S$ contains the origin, it must contain the entire disk $\{z : |z| \leq 1-2\delta \}$.

The set $\partial G$ is the union of the edges $c_w p_e$ of $G$. These edges lie outside of $S$ except for the endpoints $c_w$, which are in $\partial S$. In addition, each edge $c_w p_e$ lies inside $\DD$ because both of its endpoints are in $\DD$. Hence $\partial G \subset \{z : 1-2\delta \leq |z| < 1 \}$ and in particular, each point of $\partial G$ is within distance $2\delta$ of $\partial \DD$. Conversely, for any $u \in \partial \DD$ we may draw the line segment from $u$ to the origin. Since the origin is contained in $S \subset \widehat{G}$ while $u \notin \widehat{G}$, this segment must intersect $\partial G$ at a point $z$ with $|u-z| = 1 - |z| \leq 2\delta$.
\end{proof}

We now introduce the \emph{double circle packing theorem} and show how it can be employed to obtain orthodiagonal representations of planar maps that are not necessarily triangulations. We were not able to find this observation in the literature (although the method is essentially the same as the one in Proposition \ref{prop:orthodiagonal-triangulation}), so we provide a formal statement and quick proof here. The construction works for finite simple planar maps that are 3-connected.

The double circle packing theorem follows from Thurston's interpretation of Andreev's theorem (see \cite[Ch.~13]{Thurston}, \cite{MR90}) and was also proved by Brightwell and Scheinerman \cite{BriSch93}. It is easiest to state using circle packings on the sphere $\C \cup \{\infty\}$. In those terms, it says the following. Let $H$ be a finite simple 3-connected planar map with vertex set $W$ and face set $F$. (Unlike in the case of a triangulation with boundary, we do not distinguish an outer face.) Then there are two collections of circles on the sphere, $\mcl P = \{C_w\}_{w \in W}$ and $\mcl P^\dagger = \{C_f\}_{f \in F}$, such that:
\begin{itemize}
\item The collection $\mcl P$ is a circle packing of $H$.
\item The circles in $\mcl P^\dagger$ are internally disjoint, and two circles $C_f, C_{f'}$ are tangent if and only if the faces $f,f'$ share an edge of $H$.
\item Given an edge of $H$ that is incident to the vertices $w,w' \in W$ and the faces $f,f' \in F$, the point of tangency between $C_w$ and $C_{w'}$ is the same as the point of tangency between $C_f$ and $C_{f'}$. At this point, the circles $C_w, C_{w'}$ are orthogonal to the circles $C_f, C_{f'}$.
\end{itemize}
This construction is called a double circle packing of $H$ on the sphere and is unique up to M\"{o}bius transformations. If we place $\infty$ at a point outside all of the circles in $\mcl P$, then the stereographic projection of $(\mcl P, \mcl P^\dagger)$ is called a double circle packing of $H$ in the plane.

\begin{thm} \label{thm:orthodiagonal-representation}
Let $H$ be a finite simple 3-connected planar map and let $(\mcl P, \mcl P^\dagger)$ be a double circle packing of $H$ in the plane. Then there is an orthodiagonal representation of $H$ whose primal graph coincides with the straight-line embedding of $H$ induced by $\mcl P$.
\end{thm}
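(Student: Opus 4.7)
The plan is to imitate the proof of Proposition~\ref{prop:orthodiagonal-triangulation}, with the dual circle centers from $\mcl P^\dagger$ playing the role of the triangle incircle centers. Let $W$ be the vertex set and $F$ the face set of $H$, and choose the outer face $f_\infty \in F$ to be the one containing $\infty$ in the stereographic projection. Denote by $c_w$ the center of $C_w$ for $w \in W$ and by $c_f$ the center of $C_f$ for $f \in F \setminus \{f_\infty\}$; the circle $C_{f_\infty}$ will not supply a dual vertex, since after projection $c_{f_\infty}$ can lie in an unhelpful location (e.g.\ inside the cluster of primal circles) and the face $f_\infty$ can be handled by the same boundary trick as in Proposition~\ref{prop:orthodiagonal-triangulation}.

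The key geometric fact carries over unchanged. For each edge $e$ of $H$ incident to vertices $w, w' \in W$ and faces $f, f' \in F$, let $q_e$ denote the common tangency point of $C_w, C_{w'}$ (equivalently, of $C_f, C_{f'}$). Then $q_e$ lies on both segments $c_w c_{w'}$ and $c_f c_{f'}$, and those segments are perpendicular at $q_e$, because $c_w q_e$ and $c_f q_e$ are radii of orthogonal circles meeting at a common point. Consequently, for each edge $e$ whose two incident faces $f, f'$ are both different from $f_\infty$, the quadrilateral $c_w c_f c_{w'} c_{f'}$ has orthogonal diagonals. For each edge $e$ on the boundary of $f_\infty$, with say $f' = f_\infty$, I would copy the Proposition~\ref{prop:orthodiagonal-triangulation} construction verbatim: extend the segment $c_f q_e$ a short distance past $q_e$ into $f_\infty$ to obtain an auxiliary vertex $p_e$, chosen so short that the triangular regions $c_w p_e c_{w'}$ are pairwise disjoint and contained in the unbounded face of the straight-line embedding of $H$ induced by $\mcl P$. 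The quadrilateral $c_w c_f c_{w'} p_e$ then has orthogonal diagonals by the same argument, since $c_f q_e$ is perpendicular to $c_w c_{w'}$ and $p_e$ lies on the ray from $c_f$ through $q_e$.

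Assembling the pieces, I would define $G = (V^\bullet \sqcup V^\circ, E)$ by
\[
V^\bullet = \{c_w : w \in W\}, \qquad V^\circ = \{c_f : f \in F \setminus \{f_\infty\}\} \cup \{p_e : e \text{ on } \partial f_\infty\},
\]
with $c_w$ joined to $c_f$ whenever $w$ is incident to the inner face $f$, and to $p_e$ whenever $w$ is an endpoint of the boundary edge $e$. The inner faces of $G$ are precisely the quadrilaterals described above, all with orthogonal diagonals. What remains is to verify that $G$ is a finite orthodiagonal map whose primal graph $G^\bullet$ coincides, as a planar map with outer face corresponding to $f_\infty$, with the straight-line embedding of $H$ induced by $\mcl P$. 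The main obstacle I anticipate is careful bookkeeping: checking that the outer face of $G$ is bounded by a simple closed curve (traced by the $p_e$'s and the outer primal vertices $c_w$) and that the primal edges $e_Q^\bullet$, drawn straight or bent per the recipe in Section~\ref{sec:terminology}, reproduce the edges of $H$ with the correct cyclic order at each vertex. Both should follow from the fact that the straight-line embedding induced by $\mcl P$ already respects the planar map structure of $H$ by the definition of a circle packing.
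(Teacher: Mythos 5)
Your proposal is correct and takes essentially the same approach as the paper. The paper's proof is slightly tidier in one respect: rather than re-deriving the orthogonal-diagonal quadrilaterals directly, it first observes that the double circle packing conditions force $C_f$ to pass through the tangency points $q_e$ of all edges $e$ of the polygonal face $f$ (under the straight-line embedding induced by $\mcl P$), tangentially at each $q_e$, and hence that $C_f$ is literally the \emph{inscribed circle} of $f$; this makes the situation formally identical to that in Proposition~\ref{prop:orthodiagonal-triangulation}, so the entire construction --- including the $p_e$ boundary trick and the bookkeeping you flag as a remaining obstacle --- can be cited wholesale rather than repeated.
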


\begin{proof}
The argument below is illustrated by Figure \ref{doublepack}.

\begin{figure}
\centering
\begin{subfigure}{0.3\textwidth}
  \centering
  \includegraphics[width=0.8\textwidth]{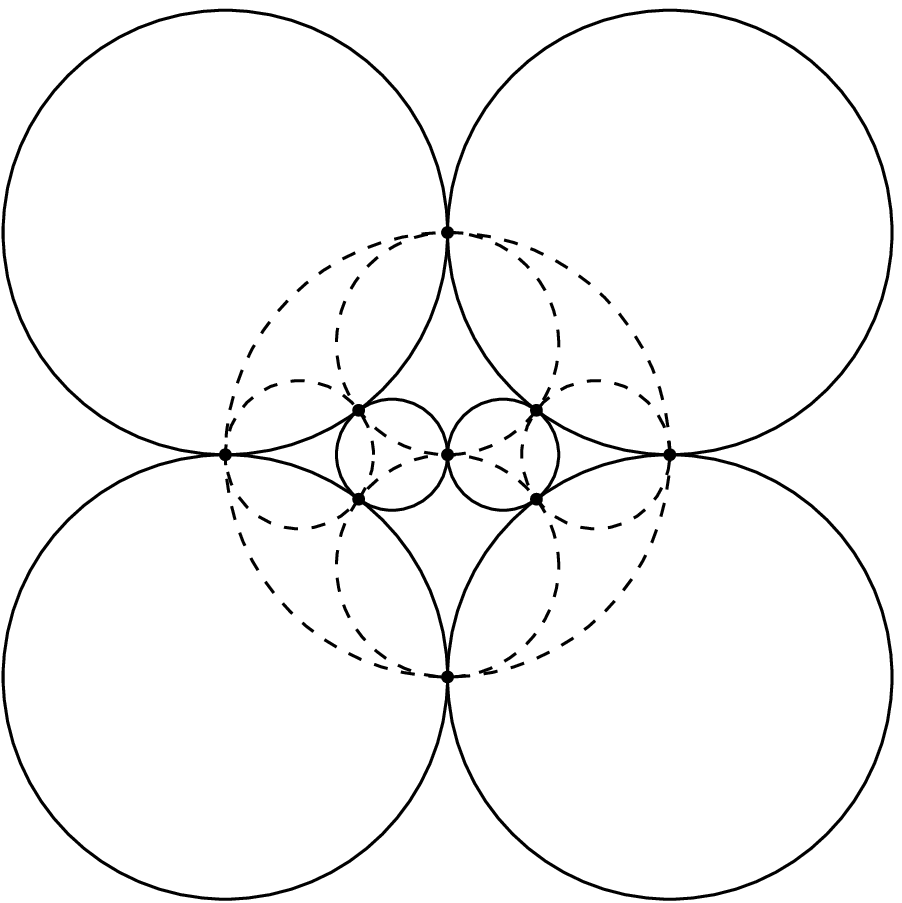}
  \caption{Double circle packing of a $3$-connected planar map.}
  \label{doublepack-2}
\end{subfigure}
\quad
\begin{subfigure}{0.3\textwidth}
  \centering
  \includegraphics[width=0.8\textwidth]{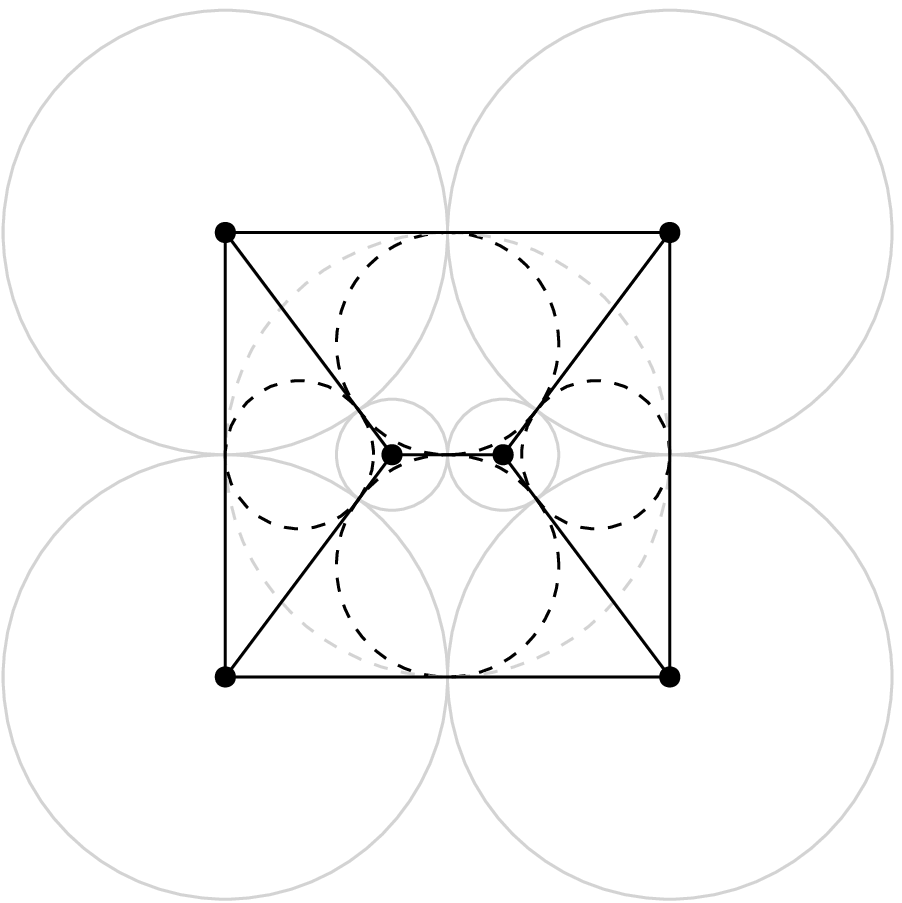}
  \caption{Circles associated with inner faces are inscribed in those faces.}
  \label{doublepack-e}
\end{subfigure}
\quad
\begin{subfigure}{0.3\textwidth}
  \centering
  \includegraphics[width=0.8\textwidth]{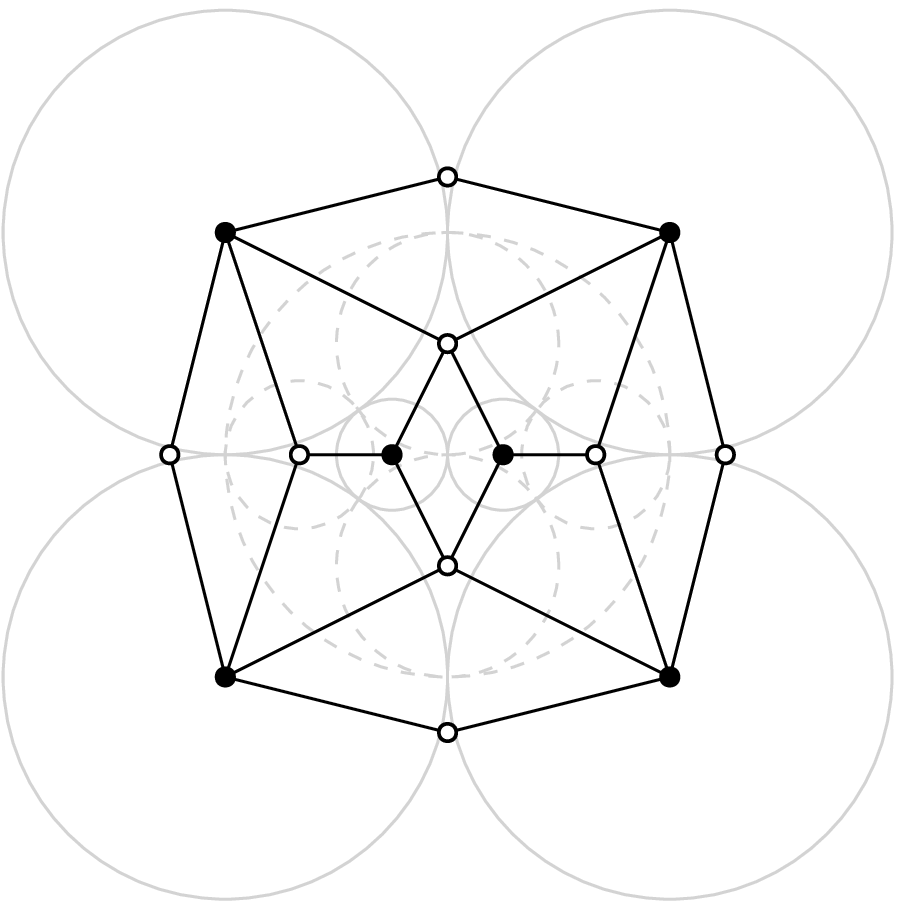}
  \caption{Orthodiagonal representation of the planar map.}
  \label{doublepack-3}
\end{subfigure}
\caption{Using the double circle packing of a finite simple $3$-connected planar map to find an orthodiagonal representation.}
\label{doublepack}	
\end{figure}

Write $\mcl P = \{C_w\}_{w \in W}$ and $\mcl P^\dagger = \{C_f\}_{f \in F}$, where $W$ and $F$ are respectively the vertex set and face set of $H$. We identify each $f \in F$ with its image under the embedding. Since each face is bounded by a simple cycle \cite[Proposition 4.2.5]{D17}, all the inner faces are polygons. Suppose that $f$ is an inner face and $e$ is an edge of $f$. Let $C_w, C_{w'}$ be the circles in $\mcl P$ centered at the endpoints of $e$. As in the proof of Proposition \ref{prop:orthodiagonal-triangulation}, let $q_e$ be the point along $e$ at which $C_w$ and $C_{w'}$ are tangent. Then $e$ is orthogonal to $C_w$ and $C_{w'}$ at $q_e$. By the definition of double circle packing, the circle $C_f$ also passes through $q_e$ at an angle orthogonal to $C_w$ and $C_{w'}$, so $C_f$ is tangent to $e$ at $q_e$. Because $C_f$ is tangent to all the edges of the polygon $f$, it must be inscribed in $f$.

Our situation is now the same as in the proof of Proposition \ref{prop:orthodiagonal-triangulation}: each inner face $f$ has an inscribed circle $C_f$ which is tangent to the edges $e$ of $f$ at the points $q_e$. Therefore, an orthodiagonal representation of $H$ can be constructed in exactly the same way.
\end{proof}

Finally, we provide an analogue of Corollary \ref{cor:D-approx} for double circle packings.

\begin{cor} \label{cor:D-approx-double}
Consider a double circle packing in the plane $(\mcl P, \mcl P^\dagger)$ of a finite simple $3$-connected planar map. Let $\fout$ be the outer face in the straight-line embedding of the map induced by $\mcl P$ and assume that its corresponding circle $C_{\fout} \in \mcl P^\dagger$ is the unit circle $\partial \DD$. Then the associated orthodiagonal map in Theorem \ref{thm:orthodiagonal-representation}, which we label $G$, can be drawn such that:
\begin{enumerate}[label=(\roman*)]
\item If the maximum radius among all the circles in $\mcl P \cup \mcl P^\dagger$ other than $C_{\fout}$ is at most $\e$, then the maximal edge length of $G$ is at most $2\e$.
\item If the maximum radius among the circles $C_w \in \mcl P$ for vertices $w$ on the boundary of $\fout$ is at most $\delta$, then the Hausdorff distance between $\partial G$ and $\partial \DD$ is at most $\delta$.
\end{enumerate}
\end{cor}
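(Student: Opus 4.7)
The plan is to carry out the construction of Theorem~\ref{thm:orthodiagonal-representation} while making an explicit choice for the positions of the extra boundary dual vertices $p_e$, and then verify (i) and (ii) by direct calculation. The heart of the argument is to identify the local geometry at each tangency point $q_e$ using the double circle packing axioms.

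First I would analyze the boundary geometry. For a boundary edge $e = (w, w')$ of $H$ incident to the inner face $f$, internal tangency of $C_f$ with $C_{\fout} = \partial \DD$ at $q_e$ gives $q_e \in \partial \DD$ and $c_f = (1 - r_f) q_e$, so the direction from $c_f$ through $q_e$ is the outward radial direction at $q_e$. The orthogonality $C_w \perp C_{\fout}$ at $q_e$ gives $(c_w - q_e) \cdot q_e = 0$, placing $c_w$ on the tangent line to $\partial \DD$ at $q_e$, at distance $r_w$ from $q_e$ and at Euclidean norm $|c_w| = \sqrt{1 + r_w^2}$. The same holds for $c_{w'}$ (on the opposite side of $q_e$), so the line carrying the primal edge $e$ is this tangent line. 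I would then set $p_e = (1 + \eta_e) q_e$ with $\eta_e := \min(r_w, r_{w'})$, reduced further if necessary so that the triangular regions $c_w p_e c_{w'}$ across different boundary edges remain pairwise disjoint.

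Given this setup, part (i) follows from two applications of the triangle inequality: $|c_w c_f| \le |c_w q_e| + |q_e c_f| = r_w + r_f \le 2\e$ and $|c_w p_e| \le r_w + \eta_e \le 2 r_w \le 2\e$. For part (ii) the plan is to show $\partial G \subset \{z : 1 \le |z| \le 1 + \delta\}$. The upper bound on each boundary segment $c_w p_e$ is immediate from the convexity of $|\cdot|$ combined with $|c_w| = \sqrt{1 + r_w^2} \le 1 + r_w \le 1 + \delta$ and $|p_e| = 1 + \eta_e \le 1 + \delta$. For the lower bound, working in local coordinates centered at $q_e$ with the tangent line as the first axis and the outward normal as the second (so that the center of $\DD$ sits at $(0,-1)$), a short calculation gives $|z|^2 = r_w^2(1-t)^2 + (t\eta_e + 1)^2 \ge 1$ along the segment, so it lies outside $\overline \DD$.

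The Hausdorff bound then drops out in both directions. Every $z \in \partial G$ satisfies $||z| - 1| \le \delta$; conversely, for any $u \in \partial \DD$, the ray $\{tu : t \ge 0\}$ starts in the compact set $\widehat G \supset \overline \DD$ and must first exit through $\partial G$ at some $t^* \in [1, 1 + \delta]$, yielding $|u - t^* u| = t^* - 1 \le \delta$. The main obstacle is the geometric identification in the second paragraph --- specifically, extracting from the orthogonality of $C_w$ and $C_{\fout}$ at $q_e$ that $c_w$ lies exactly on the tangent line to $\partial \DD$. This is what makes $\partial G$ fit inside an annulus of width $\delta$ (as opposed to the $2\delta$ appearing in Corollary~\ref{cor:D-approx}); once it is in place, the remaining computations are elementary triangle-inequality and convexity bookkeeping.
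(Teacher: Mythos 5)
Your proof is correct, and part (i) is essentially the paper's argument verbatim. Part (ii) takes a genuinely different and more computational route. You exploit the orthogonality $C_w \perp \partial\DD$ to place $c_w$ exactly on the tangent line to $\partial\DD$ at $q_e$, set up local coordinates, and establish the containment $\partial G \subset \{z : 1 \leq |z| \leq 1+\delta\}$, from which both directions of the Hausdorff bound follow (the forward one directly, the reverse via the radial exit-time argument). The paper instead gets the forward direction from a single triangle-inequality/convexity observation --- both endpoints $c_w, p_e$ of each boundary segment are within $\delta$ of $q_e \in \partial\DD$, so the whole segment is --- which needs neither the orthogonality nor any coordinates; and gets the reverse direction by observing that $\partial\DD$ is covered by the disks $D_w$ over boundary vertices $w$, each of radius at most $\delta$ centered at the point $c_w \in \partial G$. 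Your annulus statement is a strictly stronger geometric conclusion ($\partial G$ stays outside $\overline\DD$), but it is not needed for the Hausdorff bound, and the extra machinery introduces one unjustified step: you invoke $\overline\DD \subset \widehat G$ to guarantee the exit time satisfies $t^* \geq 1$, but never prove it. It is true and easy to fill --- $\overline\DD$ is connected, is disjoint from $\partial G$ by your own strict inequality, and contains $c_f = (1-r_f)q_e \in \Int(G)$ for any inner face $f$ adjacent to $\fout$ --- but it should be stated, whereas the paper's covering argument sidesteps the issue entirely.
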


We note that in Corollary \ref{cor:D-approx-double}, the outer circles in $\mcl P$ (the ones corresponding to vertices of the outer face) intersect $\partial \DD$ orthogonally. The circles in $\mcl P^\dagger \setminus \{C_{\fout}\}$, not those in $\mcl P$, are packed in $\DD$ in the sense of Corollary \ref{cor:D-approx}. To further compare the statements of the two corollaries, suppose in the setting of Corollary \ref{cor:D-approx-double} that the planar map whose packing is $\mcl P^\dagger \setminus \{C_{\fout}\}$ is a finite simple triangulation with boundary. Condition (i) of Corollary \ref{cor:D-approx-double} bounds the radii of all the circles in $\mcl P^\dagger \setminus \{C_{\fout}\}$ and in $\mcl P$. Condition (i) of Corollary \ref{cor:D-approx} bounds the radii of the circles in $\mcl P^\dagger \setminus \{C_{\fout}\}$; since the map is a triangulation, this controls the radii of all but the outer circles in $\mcl P$. In place of a bound on the radii of the outer circles, Corollary \ref{cor:D-approx} substitutes the assumption that a circle in $\mcl P^\dagger \setminus \{C_{\fout}\}$ is centered at the origin.

\begin{proof}
We use the notation from the proofs of Proposition \ref{prop:orthodiagonal-triangulation} and Theorem \ref{thm:orthodiagonal-representation}. We place the vertices $p_e$ so that each length $|q_e p_e| \leq \delta \leq \e$.

Given an edge $c_w c_f$ of $G$, let $e$ be an edge of $f$ that is incident to $c_w$. Then the segment $c_w q_e$ is a radius of the circle $C_w$, and the segment $c_f q_e$ is a radius of $C_f$. Hence $|c_w c_f| \leq |c_w q_e| + |q_e c_f| \leq \e + \e$. Given an edge $c_w p_e$ of $G$, we have $|c_w p_e| \leq |c_w q_e| + |q_e p_e| \leq \e + \e$. This verifies (i).

For (ii), the set $\partial G$ is the union of the edges $c_w p_e$ of $G$. For each such edge, the point $q_e$ is on the circle $C_{\fout} = \partial \DD$ and we have $|c_w q_e|,|p_e q_e| \leq \delta$. It follows that $|z q_e| \leq \delta$ for every $z$ on the segment $c_w p_e$. Conversely, given a vertex $w \in W$ we write $D_w$ for the closed disk enclosed by $C_w$. We observe that $\partial \DD$ is contained inside the union of the disks $D_w$ for vertices $w$ on the boundary of $\fout$. Therefore each point $z \in \partial \DD$ is in one of these disks and has distance at most $\delta$ from its center $c_w$, which is a point on $\partial G$.
\end{proof}

\section{Preliminaries}
\label{preliminaries}

This chapter collects definitions and results that will be used throughout the rest of the paper. In Section \ref{Plane graphs} we provide some basic facts about plane graphs and networks. Section \ref{Orthodiagonal maps} proves some useful statements about orthodiagonal maps. Finally, Section \ref{Continuous harmonic functions} discusses the classical Dirichlet problem and the properties of its solutions.

\subsection{Graphs, plane graphs and duality}
\label{Plane graphs}

For further background on the definitions below, see \cite{D17,MT01}.

By a graph we will always mean an undirected graph $G = (V,E)$, possibly with loops and multiple edges. All graphs in this paper will be finite. The set of directed edges obtained by choosing both possible orientations of each edge in $E$ is denoted by $\vec{E}$. The tail and head vertices of a directed edge $e \in \vec{E}$ are respectively labeled $e^-, e^+ \in V$. We write $-e$ for the reversed edge. When there is only one edge between $x,y \in V$, we sometimes write $(x,y)$ and $(y,x)$ for the directed edges. There is a natural map from $\vec{E}$ to $E$ that forgets the orientation. By composing with this map, we can and will view any function on $E$ as a function on $\vec{E}$ that assigns the same value to each pair $e,-e \in \vec{E}$.

The following lemma justifies the block decomposition of orthodiagonal maps. It will also be used in Section \ref{main proof}.

\begin{lemma}
\label{block faces}
Let $G$ be a finite plane graph whose inner faces are all bounded by simple closed curves. If $H$ is a block of $G$, then the inner faces of $H$ are all inner faces of $G$.
\end{lemma}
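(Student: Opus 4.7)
The plan is to show that no vertex or edge of $G$ lies strictly inside any inner face $F$ of $H$; this directly implies that $F$ is an inner face of $G$. First I would dispatch the trivial cases: if $H$ is a bridge block (a single edge with its endpoints) or an isolated vertex, then $H$ has no inner faces and the claim is vacuous. So I may assume $H$ is $2$-connected, and hence every face of $H$ is bounded by a simple cycle.

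Fix an inner face $F$ of $H$ with boundary $\gamma = \partial F$, a simple closed curve. Let $G_F$ be the subgraph of $G$ consisting of all vertices and edges contained in $\overline{F}$. I would first observe that $G_F$ is connected: any vertex of $G$ strictly inside $F$ is connected to $H$ by a path in $G$, and such a path may be truncated at the first moment it reaches $\gamma$, which happens at a vertex of $\gamma$. No edge of $G$ crosses $\gamma$ transversally since edges cannot cross, so the truncated path lies entirely in $\overline{F}$. I would also note that $H \cap G_F = \gamma$, because no edge of $H$ can lie strictly inside the face $F$ of $H$.

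The main step is to show $G_F = \gamma$; suppose for contradiction $G_F \supsetneq \gamma$. I would first argue that $G_F$ is not $2$-connected. Indeed, $G_F$ and $H$ share the $\geq 2$ vertices of $\gamma$; if $G_F$ were $2$-connected, then the union of two $2$-connected subgraphs sharing at least two vertices is $2$-connected, so $G_F \cup H$ would be a $2$-connected subgraph strictly larger than $H$, contradicting the maximality of $H$ as a block. Hence $G_F$ is connected but not $2$-connected, so it has a cut vertex $v$. By the standard fact that in the plane embedding of a connected graph the edges at any cut vertex split, in the cyclic rotation at that vertex, into at least two contiguous arcs corresponding to the components left after removing the vertex, some face of $G_F$ traverses $v$ at least twice in its boundary walk and therefore has boundary that is not a simple closed curve. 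This face cannot be the unbounded (outer) face of $G_F$, since the outer face of $G_F$ is bounded by the simple closed curve $\gamma$. So it is an inner face of $G_F$, lying inside $F$. But faces of $G$ lying inside $F$ coincide with inner faces of $G_F$ lying inside $F$, since the edges of $G$ meeting $\overline{F}$ are exactly the edges of $G_F$. This produces an inner face of $G$ whose boundary is not a simple closed curve, contradicting the hypothesis. Thus $G_F = \gamma$ and $F$ is an inner face of $G$.

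The step I expect to be the most delicate is the topological one: justifying that a cut vertex of the plane graph $G_F$ forces some inner face to traverse it twice, and verifying cleanly that inner faces of $G_F$ inside $F$ are literally the same open regions as inner faces of $G$ inside $F$. Both are essentially standard facts about plane graphs, but stating them carefully (especially given that loops and multi-edges are permitted here) is where the care is needed.
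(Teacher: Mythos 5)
Your argument is correct in substance but runs along a genuinely different route than the paper's. The paper's proof works \emph{locally}: it picks an edge $e$ on $\partial f$, looks at the face $f'$ of $G$ on the $f$-side of $e$, uses the hypothesis \emph{directly} to conclude that $\partial f'$ is a simple cycle $H'$, and then applies the maximality of blocks (the union of two $2$-connected subgraphs sharing the two endpoints of $e$ is $2$-connected) to conclude $H' \subseteq H$, whence $f' = f$. Your proof works \emph{globally} on the whole subgraph $G_F$ of $G$ contained in $\overline{F}$: you invoke block-maximality first (to show $G_F$ cannot be $2$-connected if $G_F \supsetneq \gamma$), and then you invoke the hypothesis in contrapositive form, via the characterization ``a connected plane graph containing a cycle is $2$-connected iff every face is bounded by a simple cycle.'' Both proofs are valid, but the paper's version avoids that characterization entirely and is shorter; yours buys a structural picture of $\overline{F}$ that is not needed for this lemma.

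Two small gaps you should patch, both in the direction you already flagged. First, your argument that $G_F$ is connected (truncating a path to $H$ at the first hit of $\gamma$) silently assumes that every vertex of $G$ strictly inside $F$ is joined to $H$ by a path in $G$; if $G$ has a whole component drawn inside $F$, this fails. It does not break the proof -- a disconnected $G_F$ immediately yields an inner face of $G_F$ (hence of $G$) with disconnected, therefore non-simple, boundary -- but then the cut-vertex step should be phrased so it covers the disconnected case, or connectivity should be derived from the hypothesis rather than assumed. Second, ``connected and not $2$-connected, so it has a cut vertex'' is only valid for $\geq 3$ vertices; when $\gamma$ is a $2$-cycle and $G_F$ consists of just its two vertices joined by extra parallel edges inside $F$, $G_F$ is in fact $2$-connected and the contradiction already arrives at the previous step, so this case should be split off explicitly. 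Neither gap is fatal, but both are exactly the multigraph subtleties you anticipated.
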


\begin{proof}
Let $f$ be an inner face of $H$ and let $e$ be an (undirected) edge of $H$ that is part of the boundary of $f$. There is a face $f'$ of $G$ that has nonempty intersection with $f$ and whose boundary also includes $e$. Since $H$ is a subgraph of $G$, we have $f' \subseteq f$. Thus $f'$ cannot be the outer face of $G$. The boundary $H'$ of $f'$, viewed as a subgraph of $G$, is a simple cycle. If $e$ is a loop, then it is the only edge in $H'$, so $H' \subseteq H$. Otherwise, the intersection of the $2$-connected graphs $H$ and $H'$ contains two distinct vertices (the endpoints of $e$), so the union $H \cup H'$ is also $2$-connected. By maximality of the blocks, $H' \subseteq H$. Hence $f'$ is a face of $H$, and we conclude that $f = f'$.
\end{proof}

Let $G$ be a finite connected plane graph, and let $F$ be its set of faces. The plane graph $G^\dagger = (V^\dagger, E^\dagger)$, with set of faces $F^\dagger$, is a \textbf{plane dual} of $G$ if:
\begin{enumerate}[label=(\roman*)]
\item There is a bijection between $F$ and $V^\dagger$ such that each $x^\dagger \in V^\dagger$ is contained in the face of $G$ to which it corresponds.
\item There is a bijection between $V$ and $F^\dagger$ such that each $x \in V$ is contained in the face of $G^\dagger$ to which it corresponds.
\item There is a bijection between $E$ and $E^\dagger$ such that, if $e \in E$ is incident to the vertices $x_1,x_2 \in V$ and borders the faces $f_1,f_2 \in F$, then the corresponding dual edge $e^\dagger \in E^\dagger$ is incident to the vertices in $V^\dagger$ that correspond to $f_1,f_2$ via the bijection in (i), and borders the faces in $F^\dagger$ that correspond to $x_1,x_2$ via the bijection in (ii).
\item Each pair of corresponding edges $e \in E$ and $e^\dagger \in E^\dagger$ intersects in exactly one point, and these are the only intersections of $G$ with $G^\dagger$.
\end{enumerate}

It is well-known \cite{D17} that every finite connected plane graph $G$ has a plane dual $G^\dagger$, and that in turn, $G$ is a plane dual of $G^\dagger$.

\subsection{Properties of orthodiagonal maps}\label{Orthodiagonal maps}
In this subsection we prove three fundamental statements about orthodiagonal maps. Lemma \ref{lem:connected} will allow us to apply results that require the graph to be connected, such as those in Section \ref{Electric networks}, to the primal and dual graphs. Lemma \ref{orientation} is a simple orientation property that underlies the proofs of Propositions \ref{martingale} and \ref{energy convergence}. Lastly, Proposition \ref{martingale} says that the edge weights \eqref{conductances} make the random walk on the primal network into a martingale on the interior vertices.

\begin{lemma}
\label{lem:connected}
Given a finite orthodiagonal map $G = (V^\bullet \sqcup V^\circ, E)$, both the primal graph $G^\bullet$ and the dual graph $G^\circ$ are connected.	
\end{lemma}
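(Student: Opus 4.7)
The plan is to bootstrap from the connectivity of $G$ itself (given by definition) using the bipartite structure. Since $G$ is connected and bipartite with parts $V^\bullet$ and $V^\circ$, any two primal vertices $v,v' \in V^\bullet$ are joined by a path in $G$ that alternates strictly between $V^\bullet$ and $V^\circ$. By induction on the length of such a path, it suffices to prove the following local claim: if $v,v' \in V^\bullet$ are both adjacent in $G$ to a common dual vertex $w \in V^\circ$, then $v$ and $v'$ lie in the same component of $G^\bullet$. Once this is established, the same argument with the roles of primal and dual interchanged handles $G^\circ$.

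To prove the local claim, I would look at the combinatorial structure around $w$ in the embedding. List the edges of $G$ incident to $w$ in counterclockwise cyclic order as $w v_{(1)}, w v_{(2)}, \ldots, w v_{(d)}$; each consecutive pair bounds a well-defined angular sector at $w$ lying in a single face of $G$. If that sector lies in an inner face $Q$, then $w$ is a vertex of $Q$ whose two $Q$-incident edges are $w v_{(j)}$ and $w v_{(j+1)}$; since $Q$ is a quadrilateral with vertices alternating between the two parts, we must have $Q = [v_{(j)}, w, v_{(j+1)}, w']$ for some $w' \in V^\circ$, and the primal edge $e^\bullet_Q$ joins $v_{(j)}$ to $v_{(j+1)}$ directly in $G^\bullet$.

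To finish, split into cases. If $w \in \Int(V^\circ)$, every sector around $w$ lies in an inner face, so $v_{(1)}, \ldots, v_{(d)}$ form a closed cycle in $G^\bullet$. If $w \in \partial V^\circ$, I would use the assumption that $\partial G$ is a simple closed curve passing through $w$: locally at $w$, the boundary consists of exactly two edges, so exactly one angular sector at $w$ lies in the outer face and the remaining $d-1$ sectors lie in inner faces. These $d-1$ sectors still yield primal edges linking $v_{(1)}, \ldots, v_{(d)}$ consecutively into a path in $G^\bullet$. Either way, all primal neighbors of $w$ (and hence $v$ and $v'$) are in the same component of $G^\bullet$, completing the proof.

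The main thing to be careful about is the boundary case: one has to be sure that a boundary dual vertex contributes exactly one ``missing'' sector (the outer face) and that the remaining inner-face sectors chain the primal neighbors into a single connected path rather than several pieces. This follows cleanly from the simple-closed-curve hypothesis on $\partial G$, which guarantees that the two boundary edges at $w$ are adjacent in the cyclic order at $w$ and thus bracket a single outer sector. The possible non-convexity of the quadrilaterals is not an issue here because the argument only uses the combinatorial incidence of $e^\bullet_Q$ with the two primal vertices of $Q$, which holds by the definition of $e^\bullet_Q$ regardless of whether $Q$ is convex.
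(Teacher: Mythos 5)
Your proof is correct, but it takes a genuinely different route from the paper's. The paper argues topologically: it draws a generic continuous curve from $x$ to $y$ inside $\Int(G)$, observes that the connected components of $\widehat{G}\setminus(V^\circ\cup E^\circ)$ are in bijection with the primal vertices, and reads off a path in $G^\bullet$ from the sequence of components the curve visits. You instead argue combinatorially: you use the connectivity of $G$ itself, reduce via alternating bipartite paths to the local claim that all primal neighbors of a fixed $w\in V^\circ$ lie in one component of $G^\bullet$, and then verify this by chaining the primal vertices through the cyclic sequence of inner faces around $w$. The careful treatment of the boundary case is the crux, and your observation that the simple-closed-curve hypothesis on $\partial G$ forces the two boundary edges at $w$ to be cyclically adjacent (so exactly one sector is outer) is exactly what makes the chain a single path rather than several. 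Your approach is more elementary in that it avoids the perturbation/transversality step, and it localizes the argument to a single dual vertex; the paper's approach is more geometric and incidentally establishes the primal-vertex-to-region bijection, which is a picture worth having. One small point worth making explicit: a degree-1 vertex would break your sector enumeration, but this cannot occur in an orthodiagonal map since a simple-closed-curve boundary gives every boundary vertex degree at least 2, and an interior degree-1 vertex would force an inner face that is not a quadrilateral with four distinct vertices.
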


\begin{proof}
We only show that $G^\bullet$ is connected, since the proof for $G^\circ$ is identical. The argument below is illustrated by Figure \ref{paths}.

\begin{figure}
\centering
\includegraphics[width=0.75\textwidth]{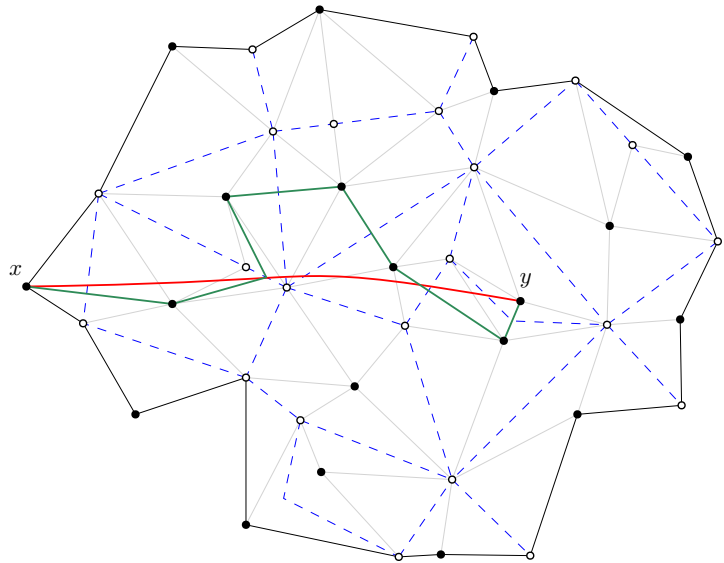}
\caption{Converting a continuous path into a path in $G^\bullet$. The edges of $G$ are in gray except for those in $\partial G$, which are black. The dual graph $G^\circ$ is shown with dashed blue edges. Each connected component of $\widehat{G} \setminus (V^\circ \cup E^\circ)$ contains exactly one primal vertex. The continuous red path from $x$ to $y$ is converted to the green path in $G^\bullet$.}
\label{paths}	
\end{figure}

Let $x,y \in V^\bullet$. Because $\partial G$ is a simple closed curve, we may draw a continuous path from $x$ to $y$ that is entirely contained in $\Int(G)$ except possibly for the two endpoints. Applying a small perturbation if necessary, we may assume that the path does not intersect any vertex in $V^\circ$ and has finitely many intersections with the set $E^\circ$.

The connected components of $\widehat{G} \setminus (V^\circ \cup E^\circ)$ come in two types. First, each $v \in \Int(V^\bullet)$ is contained in a face $f^\circ(v)$ of $G^\circ$ which is a connected component of $\widehat{G} \setminus (V^\circ \cup E^\circ)$. The boundary of $f^\circ(v)$ consists of precisely those dual edges in $E^\circ$ whose corresponding primal edges in $E^\bullet$ are incident to $v$. Second, each $w \in \partial V^\bullet$ is contained in a connected component of $\widehat{G} \setminus (V^\circ \cup E^\circ)$ which we call $f^\circ(w)$ even though it is not a face of $G^\circ$. The boundary of $f^\circ(w)$ consists of the dual edges in $E^\circ$ whose corresponding primal edges in $E^\bullet$ are incident to $w$, along with the two edges of $G$ that are part of $\partial G$ and incident to $w$. In this way, the connected components of $\widehat{G} \setminus (V^\circ \cup E^\circ)$ are in bijection with the vertices in $V^\bullet$.

Write the sequence of connected components of $\widehat{G} \setminus (V^\circ \cup E^\circ)$ traversed in order by the continuous path from $x$ to $y$ as $f^\circ(x_0), f^\circ(x_1), \ldots, f^\circ(x_k)$. We have $x_0 = x$ and $x_k = y$. When the continuous path goes from $f^\circ(x_j)$ to $f^\circ(x_{j+1})$, it must do so by crossing a dual edge in $E^\circ$ whose corresponding primal edge in $E^\bullet$ has endpoints $x_j$ and $x_{j+1}$. Therefore, $(x_0,\ldots,x_k)$ is a path in $G^\bullet$ from $x$ to $y$.
\end{proof}

The following orientation lemma is essentially trivial for convex inner faces and still holds in the general setting. Recall that the notation $Q = [v_1,w_1,v_2,w_2]$ means that the counterclockwise-oriented boundary of $Q$ visits those vertices in order.

\begin{lemma} \label{orientation}
Let $Q = [v_1,w_1,v_2,w_2]$ be an inner face of an orthodiagonal map. Let $\vv$ and $\ww$ be unit vectors pointing in the directions of $\overrightarrow{v_1 v_2}$ and $\overrightarrow{w_1 w_2}$, respectively. Then $\ww$ is the counterclockwise rotation of $\vv$ about the origin by the angle $\pi/2$.
\end{lemma}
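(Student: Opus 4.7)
The plan is to reduce the orientation claim to the positivity of the signed area of $Q$. Write $a \times b := a_1 b_2 - a_2 b_1$ for the standard $2$-dimensional cross product on $\R^2$. For orthogonal unit vectors, $a \times b > 0$ is equivalent to $b$ being the counterclockwise rotation of $a$ by $\pi/2$, while $a \times b < 0$ is equivalent to $b$ being the clockwise rotation. Since $v_2 - v_1$ is a positive multiple of $\vv$ and $w_2 - w_1$ is a positive multiple of $\ww$, it will suffice to establish
\[
(v_2 - v_1) \times (w_2 - w_1) > 0.
\]

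The key step is the shoelace formula applied to the simple closed polygon $\partial Q$. Because the boundary of $Q$ visits $v_1, w_1, v_2, w_2$ in counterclockwise order, the signed area of $Q$ is positive, yielding
\[
2\,\area(Q) \;=\; v_1 \times w_1 + w_1 \times v_2 + v_2 \times w_2 + w_2 \times v_1 \;>\; 0.
\]
Using antisymmetry and bilinearity of $\times$, the right-hand side telescopes: grouping as $(v_1 - v_2) \times w_1 + (v_2 - v_1) \times w_2$ collapses it to $(v_2 - v_1) \times (w_2 - w_1)$, which is exactly what is needed.

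The only mild subtlety is that $Q$ is allowed to be non-convex, so in principle one diagonal lies partly outside $Q$. However, the shoelace formula and the sign convention for signed area under counterclockwise traversal are valid for arbitrary simple closed polygonal curves (e.g.\ by Green's theorem), so convex and concave quadrilaterals are handled by the same computation and no case analysis is required. I do not expect a substantive obstacle: the content of the lemma is precisely that the combinatorial labeling $v_1, w_1, v_2, w_2$ is consistent with the CCW orientation of $\partial Q$, and the shoelace identity converts this automatically into the desired cross-product sign.
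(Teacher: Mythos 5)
Your proof is correct, and it takes a genuinely different route from the paper's. The paper argues geometrically by case analysis: for convex $Q$, observe that $w_1$ lies to the right of $\overrightarrow{v_1 v_2}$ and $w_2$ to the left, forcing the rotation to be counterclockwise; for non-convex $Q$, it appeals to a deformation into a convex quadrilateral with orthogonal diagonals that preserves $\vv$ and $\ww$ (illustrated by a figure rather than proved in detail). Your proof replaces both steps with a single algebraic identity: the shoelace formula gives $2\area(Q) = v_1 \times w_1 + w_1 \times v_2 + v_2 \times w_2 + w_2 \times v_1$, which telescopes (I checked the antisymmetry/bilinearity manipulation and it is right) to $(v_2 - v_1) \times (w_2 - w_1)$, and positivity of the signed area for a simple closed polygon traversed counterclockwise does the rest. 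This treats the convex and non-convex cases uniformly and avoids the deformation, so it is arguably tighter than the paper's argument. The one background fact you lean on -- that signed area is positive for an arbitrary simple polygon traversed counterclockwise -- is standard (Green's theorem), as you note. A tiny stylistic point: it would be worth saying explicitly that the orthogonality of $\vv$ and $\ww$ (from the orthodiagonal hypothesis) is what lets you conclude the rotation is exactly $\pi/2$ rather than merely some angle in $(0,\pi)$; your opening sentence about "orthogonal unit vectors" does encode this, but making the dependence on the hypothesis explicit would not hurt.
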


\begin{proof}
Since $\vv$ and $\ww$ are orthogonal, the only question is whether the rotation is clockwise or counterclockwise. If $Q$ is convex, then the rotation must be counterclockwise since $w_1$ is on the right side of the directed segment $\overrightarrow{v_1 v_2}$ and $w_2$ is on the left side. If $Q$ is not convex, it can be deformed into a convex quadrilateral with orthogonal diagonals in a way that preserves $\vv$ and $\ww$. See Figure \ref{deform}.
\end{proof}

\begin{figure}
\centering
\includegraphics[width=0.15\textwidth]{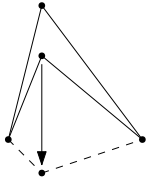}
\caption{Deforming a concave quadrilateral face into a convex quadrilateral without changing the orientation of the diagonals.}
\label{deform}	
\end{figure}

The following proposition, due to Duffin \cite{D68} and Dubejko \cite{D97}, states that the random walk on $(G^\bullet, c)$, which follows each edge with probability proportional to its weight as defined in \eqref{conductances}, is a martingale on the interior vertices $\Int(V^\bullet)$.

\begin{prop}[\cite{D68}, Theorem 2; \cite{D97}, Lemma 3.3] \label{martingale}
Let $(G^\bullet, c)$ be the primal network associated with a finite orthodiagonal map $G = (V^\bullet \sqcup V^\circ, E)$. The random walk $(X_n)$ on $(G^\bullet, c)$ is a martingale on the interior vertices $\Int(V^\bullet)$. That is, the horizontal and vertical coordinate functions are discrete harmonic on $\Int(V^\bullet)$.
\end{prop}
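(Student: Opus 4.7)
The plan is to verify the discrete harmonicity of the two coordinate functions simultaneously by treating them as the real and imaginary parts of the identity map $z \mapsto z$ on $V^\bullet \subset \R^2 \cong \C$. Concretely, the two scalar discrete harmonicity identities at $v_1 \in \Int(V^\bullet)$ combine into the single vector identity
\[
\sum_{Q=[v_1,w_1,v_2,w_2]} c(e^\bullet_Q)(v_2 - v_1) = 0,
\]
where the sum is over the faces $Q$ incident to $v_1$, and it suffices to prove this.

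First I would rewrite each summand using the conductance formula \eqref{conductances}: the contribution from $Q$ is
\[
\frac{|w_1 w_2|}{|v_1 v_2|}\,(v_2 - v_1) = |w_1 w_2|\,\vv,
\]
where $\vv$ is the unit vector pointing from $v_1$ to $v_2$. By Lemma \ref{orientation}, $\vv$ is the clockwise rotation by $\pi/2$ of the unit vector $\ww$ pointing from $w_1$ to $w_2$, so $|w_1 w_2|\,\vv = R(w_2 - w_1)$, where $R$ denotes clockwise rotation by $\pi/2$ about the origin. Pulling the linear map $R$ out of the sum reduces the claim to
\[
\sum_{Q=[v_1,w_1,v_2,w_2]} (w_2 - w_1) = 0.
\]

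The final step is to observe that this sum telescopes. Because $v_1$ is an interior vertex, the faces incident to $v_1$ can be enumerated in cyclic (counterclockwise) order around $v_1$ as $Q_1, Q_2, \ldots, Q_d$, and consecutive faces $Q_i, Q_{i+1}$ share an edge of $G$ incident to $v_1$, whose other endpoint is a dual vertex. Writing these shared dual vertices cyclically as $w_1^{(v_1)}, \ldots, w_d^{(v_1)}$, the counterclockwise boundary convention gives $Q_i = [v_1, w_i^{(v_1)}, v_2^{(i)}, w_{i+1}^{(v_1)}]$ with indices taken modulo $d$. Hence
\[
\sum_{i=1}^d (w_{i+1}^{(v_1)} - w_i^{(v_1)}) = 0.
\]

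The only step that needs some care is the cyclic labeling of the dual vertices around an interior primal vertex $v_1$; this rests on the fact that the faces of $G$ touching $v_1$ cover a full neighborhood of $v_1$ precisely when $v_1 \in \Int(V^\bullet)$, which is why the identity fails at boundary vertices. Once this combinatorial fact is in hand, the proof is an immediate consequence of Lemma \ref{orientation} and a telescoping sum.
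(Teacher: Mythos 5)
Your proof is correct and follows essentially the same route as the paper: reduce to a single vector (coordinate-wise) identity at $v_1$, use Lemma \ref{orientation} to convert each weighted primal increment $c(e^\bullet_Q)(v_2-v_1)$ into a clockwise rotation of the dual increment $w_2-w_1$, and then close with a telescoping sum over the cyclically ordered faces around the interior vertex. The paper phrases it as $\Exp_v[X_1]-v=0$ rather than as an identity for the map $z\mapsto z$, but the computation and the appeal to interiority for the cyclic labeling are the same.
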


\begin{proof}
Let $v \in \Int(V^\bullet)$ and let $Q_1,\ldots,Q_k$ be the faces of $G$ incident to $v$, listed in counterclockwise order. Taking indices mod $k$, we may write $Q_j = [v,w_j,v_j,w_{j+1}]$, where $w_1,\ldots,w_k \in V^\circ$ are the neighbors of $v$ in $G$ and $v_1,\ldots,v_k \in V^\bullet$ are the neighbors of $v$ in $G^\bullet$. See Figure \ref{local-nbd}. Let $\Exp_v$ denote the expectation for the random walk started at $X_0 = v$. We have
\[
\Exp_v[X_1] - v = \sum_{j=1}^k \frac{c(e^\bullet_{Q_j})}{\pi(v)} (v_j - v) = \frac{1}{\pi(v)} \sum_{j=1}^k |w_j w_{j+1}| \frac{v_j - v}{|v v_j|} \, .
\]

\begin{figure}
\centering
\includegraphics[width=0.55\textwidth]{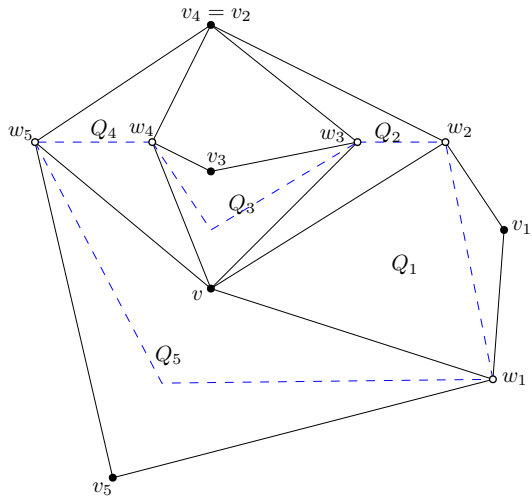}
\caption{Notation for the vertices, edges, and faces in the immediate neighborhood of a primal vertex $v \in \Int(V^\bullet)$. The dual vertices $w_j$ are always distinct, but the primal vertices $v_j$ need not be. The boundary of the face of $G^\circ$ containing $v$ is a simple cycle marked with a blue dashed line.}
\label{local-nbd}	
\end{figure}

Our goal is to show that this sum is zero. Let $T: \R^2 \to \R^2$ be clockwise rotation about the origin by the angle $\pi/2$. By Lemma \ref{orientation},
\[
T\left( \frac{w_{j+1} - w_j}{|w_j w_{j+1}|} \right) = \frac{v_j - v}{|v v_j|} \, .
\]
Therefore,
\[
\Exp_v[X_1] - v = \frac{1}{\pi(v)} \sum_{j=1}^k T(w_{j+1} - w_j) = \frac{1}{\pi(v)} T\left( \sum_{j=1}^k w_{j+1} - w_j \right) = \frac{1}{\pi(v)} T(0) = 0 \, ,
\]
which shows that $(X_n)$ is a martingale at $v$.
\end{proof}

\subsection{Continuous harmonic functions}
\label{Continuous harmonic functions}

This subsection collects all the results about continuous harmonic functions and the Dirichlet problem that will be needed in the paper. Propositions \ref{interior derivative estimates} through \ref{energy minimization} will not be used until Chapter \ref{main proof}.

Let $\Omega \subset \R^2$ be a domain. A real-valued function $f \in C^2(\Omega)$ has first-order derivatives $D_1 f, D_2 f$ and second-order derivatives $D_{ij} f$, $i,j \in \{1,2\}$. Recall that $\nabla f$ and $Hf$ are the gradient and Hessian matrix of $f$.
Given $S \subset \Omega$, we define
\[
\|\nabla f\|_{\infty,S} = \sup_{x \in S} |\nabla f(x)|\, , \qquad \|Hf\|_{\infty,S} = \sup_{x \in S} \|Hf(x)\|_2 \, .
\]

The real-valued function $h \in C^2(\Omega)$ is called \textbf{harmonic} on $\Omega$ if $D_{11}h + D_{22}h = 0$ on $\Omega$. We will sometimes refer to such functions as ``continuous harmonic'' to distinguish from discrete harmonic functions. Proofs of the following three propositions can be found in \cite[Ch.~2]{GT01}.

\begin{prop}[Continuous maximum principle] \label{continuous max principle}
Let $\Omega \subset \R^2$ be a domain and $h: \Omega \to \R$ be harmonic on $\Omega$. If there is $x \in \Omega$ such that $h(x) = \sup_\Omega h$, then $h$ is constant on $\Omega$.
\end{prop}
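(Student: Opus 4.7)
The plan is to prove the maximum principle via the mean value property plus a standard open/closed connectedness argument, which is the classical route laid out in \cite[Ch.~2]{GT01}.

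First I would establish the mean value property: if $h$ is harmonic on $\Omega$ and $\overline{B(x_0,r)} \subset \Omega$, then
\[
h(x_0) = \frac{1}{2\pi r} \int_{\partial B(x_0,r)} h \, d\sigma.
\]
The standard way is to define $\phi(\rho) = \frac{1}{2\pi\rho} \int_{\partial B(x_0,\rho)} h \, d\sigma$ for $\rho \in (0,r]$, change variables to the unit circle to differentiate under the integral, and apply the divergence theorem to convert the resulting integral of the normal derivative of $h$ on $\partial B(x_0,\rho)$ into an integral of $\Delta h = D_{11}h + D_{22}h = 0$ over the ball. This shows $\phi'(\rho) = 0$, and continuity of $h$ gives $\lim_{\rho \to 0^+} \phi(\rho) = h(x_0)$, yielding the mean value identity. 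Integrating in $\rho$ also gives the solid mean value property $h(x_0) = \frac{1}{\pi r^2} \int_{B(x_0,r)} h \, dA$, which I would use next.

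With the mean value property in hand, set $M = \sup_\Omega h$ and $S = \{x \in \Omega : h(x) = M\}$, which is nonempty by hypothesis. To see that $S$ is open: if $x_0 \in S$, pick $r > 0$ with $\overline{B(x_0,r)} \subset \Omega$; then
\[
M = h(x_0) = \frac{1}{\pi r^2} \int_{B(x_0,r)} h \, dA \leq \frac{1}{\pi r^2} \int_{B(x_0,r)} M \, dA = M,
\]
forcing equality, so $h \equiv M$ on $B(x_0,r)$ (since $M - h \geq 0$ is continuous with zero integral), i.e.\ $B(x_0,r) \subset S$. The set $S$ is also closed in $\Omega$ as the preimage of $\{M\}$ under the continuous function $h$. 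Since $\Omega$ is connected by definition of a domain, $S$ is nonempty, open, and closed in $\Omega$, hence $S = \Omega$, which is the desired conclusion.

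The only mildly technical step is deriving the mean value property from the PDE $\Delta h = 0$; everything else is pure point-set topology once that is in place. Since the authors explicitly cite \cite[Ch.~2]{GT01} for the full details, I would keep the writeup brief and emphasize the open/closed dichotomy rather than reproving the mean value property from scratch.
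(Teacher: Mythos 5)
Your proof is correct and follows exactly the approach the paper points to: the paper gives no proof of its own but cites \cite[Ch.~2]{GT01}, where the argument is precisely the mean value property followed by the open/closed connectedness dichotomy that you lay out. Nothing to add.
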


\begin{prop}[Existence and uniqueness of continuous harmonic extensions] \label{continuous extensions}
Let $\Omega \subset \R^2$ be a bounded simply connected domain, and let $g: \partial \Omega \to \R$ be a continuous function. There is a unique function $h: \overline{\Omega} \to \R$ such that $h = g$ on $\partial \Omega$ and $h$ is harmonic on $\Omega$.
\end{prop}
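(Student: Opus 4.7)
The plan is to prove uniqueness directly from Proposition \ref{continuous max principle}, and existence by the Perron method, relying on the planar simple connectivity of $\Omega$ to produce a barrier at every boundary point.

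For uniqueness, suppose $h_1,h_2$ are both solutions. Then $h_1-h_2$ is harmonic on $\Omega$, continuous on the compact set $\overline{\Omega}$, and vanishes on $\partial\Omega$. Its maximum and minimum are attained on $\overline{\Omega}$; if either were attained at an interior point, Proposition \ref{continuous max principle} would force $h_1-h_2$ to be constant, and the boundary values would pin that constant at $0$. So $h_1=h_2$.

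For existence I would run the classical Perron construction. Let $\mathcal{S}_g$ be the set of bounded subharmonic $u:\Omega\to\R$ with $\limsup_{x\to y,\,x\in\Omega}u(x)\le g(y)$ for every $y\in\partial\Omega$; put $h(x):=\sup\{u(x):u\in\mathcal{S}_g\}$. The envelope $h$ is bounded between $\min g$ and $\max g$ (since those constants sandwich the family), and the usual two moves — replacing any $u\in\mathcal{S}_g$ by its Poisson modification on a small disk, and taking a maximizing sequence that converges locally uniformly via Harnack — show that $h$ is harmonic on $\Omega$. This part is entirely standard and does not use anything about $\Omega$ beyond being an open set.

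The substantive step, and what I expect to be the main obstacle, is showing $\lim_{x\to y_0}h(x)=g(y_0)$ for every $y_0\in\partial\Omega$. It suffices to construct a \emph{barrier} at $y_0$: a function $w$, continuous and superharmonic on $\Omega\cap B(y_0,r)$ for some $r>0$, with $w(y_0)=0$ and $w>0$ elsewhere. Here I would use that $\Omega\subset\R^2$ is bounded and simply connected, so $(\C\cup\{\infty\})\setminus\Omega$ is connected; hence $K:=\R^2\setminus\Omega$ is a connected, unbounded closed set containing $y_0$, and every small disk around $y_0$ meets $K$ in a continuum with more than one point. This is exactly the planar regularity criterion: given such a local continuum, one can build a barrier by, for instance, mapping the simply connected region $B(y_0,r)\setminus K$ (up to the component containing $\Omega$ near $y_0$) conformally onto a slit domain and pulling back an explicit harmonic function vanishing only at $y_0$, or alternatively using a $-1/\log$ type construction based on harmonic measure of the local continuum. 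Once a barrier exists at each boundary point, a standard squeeze between the Perron subsolution and supersolution yields $h(x)\to g(y_0)$, completing the proof. The hard part is precisely this appeal to planar topology; in higher dimensions or without simple connectivity, boundary points can fail to be regular and the analogous statement would need to be weakened.
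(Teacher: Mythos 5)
Your argument is correct, but note that the paper does not actually prove Proposition~\ref{continuous extensions}: it cites \cite[Ch.~2]{GT01} for all three classical facts in this subsection. The Perron construction you outline (subharmonic envelope, harmonicity via Poisson modification and a Harnack argument, boundary continuity via barriers) is exactly the route taken in that reference, and the uniqueness argument via Proposition~\ref{continuous max principle} is standard. The genuinely nontrivial ingredient you isolate is the right one: a barrier must be produced at every boundary point, and this is precisely where boundedness, planarity, and simple connectivity all enter. Since $\Omega$ is simply connected, $\widehat{\C}\setminus\Omega$ is connected; since $\Omega$ is bounded, $\R^2\setminus\Omega$ is unbounded, hence also connected, so near each $y_0\in\partial\Omega$ the complement contains a nondegenerate continuum reaching $\partial B(y_0,r)$, from which a barrier (e.g.\ your $-1/\log$ construction) can be built. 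This two-dimensional continuum criterion is not spelled out in \cite{GT01}, which emphasizes exterior cone and exterior sphere conditions, so your sketch supplies exactly the detail that a bare citation to that reference would leave to the reader. Your closing remark is also apt: in $\R^3$ (Lebesgue spine) or for non-simply-connected planar domains (isolated boundary punctures), boundary points can be irregular, so both hypotheses in the proposition are being used.
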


The function $h$ is called the solution to the \textbf{continuous Dirichlet problem} on $\Omega$ with boundary data $g$.

\begin{prop}[Interior derivative estimates] \label{interior derivative estimates}
Let $h$ be harmonic on the domain $\Omega \subset \R^2$. For any $x \in \Omega$,
\[
|\nabla h(x)| \leq \frac{2}{\dist(x, \partial \Omega)} \sup_{y \in \Omega} |h(y)|\, ,
\]
where $\dist$ denotes Euclidean distance. In addition, for any $i,j \in \{1,2\}$,
\[
|D_{ij}h(x)| \leq \frac{16}{\dist(x, \partial \Omega)^2} \sup_{y \in \Omega} |h(y)|\, .
\]
\end{prop}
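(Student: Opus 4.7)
The plan is to combine the mean value property of harmonic functions with the divergence theorem in the standard way. The key observation is that every partial derivative $D_\xi h$ of a harmonic function $h$, in any direction $\xi \in \R^2$, is again harmonic: the Laplacian commutes with differentiation, and harmonic functions are $C^\infty$ on $\Omega$. Fix $x \in \Omega$ and set $d = \dist(x, \partial\Omega)$. We may assume $d > 0$ and $\sup_\Omega |h| < \infty$, since otherwise the desired estimates are vacuous.

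For each unit vector $\xi$ and each $0 < r < d$, the closed ball $\overline{B(x,r)}$ lies in $\Omega$, so the mean value property applied to the harmonic function $D_\xi h$, followed by the divergence theorem applied to the vector field $h\xi$, yields
\[
D_\xi h(x) \;=\; \frac{1}{\pi r^2} \int_{B(x,r)} D_\xi h(y)\, dy \;=\; \frac{1}{\pi r^2} \int_{\partial B(x,r)} h(y)\,\bigl(\xi \cdot \nu(y)\bigr)\, dS(y),
\]
where $\nu$ is the outward unit normal. Since $|\xi \cdot \nu| \leq 1$ and $\partial B(x,r)$ has length $2\pi r$, this gives $|D_\xi h(x)| \leq (2/r)\sup_\Omega |h|$. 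Choosing $\xi = \nabla h(x)/|\nabla h(x)|$ (when nonzero) and letting $r \nearrow d$ yields the first inequality.

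For the Hessian bound I would iterate the argument. Applying the gradient estimate just proved to the harmonic function $D_i h$ on the ball $B(x, d/2) \subset \Omega$ gives $|D_{ij} h(x)| \leq (4/d)\sup_{B(x,d/2)} |D_i h|$. For any $y \in B(x, d/2)$ we have $\dist(y, \partial\Omega) \geq d/2$, so applying the gradient estimate at $y$ with this distance yields $|D_i h(y)| \leq (4/d)\sup_\Omega |h|$. Combining these two bounds produces $|D_{ij} h(x)| \leq (16/d^2)\sup_\Omega |h|$.

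This is a classical textbook derivation, so there is no substantial obstacle; the only care required is in the constant bookkeeping, where halving the distance in the iteration step is responsible for the factor $16 = 4 \cdot 4$ rather than something smaller.
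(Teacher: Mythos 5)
Your proof is correct, and it is essentially the standard Gilbarg--Trudinger argument that the paper cites in lieu of giving its own proof: apply the mean value property (over balls) to the harmonic function $D_\xi h$, convert to a boundary integral via the divergence theorem to get the gradient estimate with the sharp 2D constant $2/r$, then bootstrap once on a ball of half the radius to obtain the Hessian bound with constant $16$. The constant bookkeeping and the optimization over $\xi$ and $r$ are all handled correctly.
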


The next result is a special case of the main theorem in \cite{H88}. It says that on a bounded simply connected domain, a solution to the continuous Dirichlet problem with Lipschitz boundary data must be H\"{o}lder continuous with exponent $1/2$.

\begin{prop}[H\"{o}lder continuity up to the boundary] \label{Holder}
Let $\Omega \subset \R^2$ be a bounded simply connected domain. Let the continuous function $h: \overline{\Omega} \to \R$ be harmonic on $\Omega$ and satisfy $|h(z) - h(w)| \leq L|z-w|$ for all $z,w \in \partial \Omega$. Then there is a universal constant $C < \infty$ such that
\[
|h(x) - h(y)| \leq CL \diam(\Omega)^{1/2} |x-y|^{1/2} \qquad \text{for all $x,y \in \overline{\Omega}$.}
\]
\end{prop}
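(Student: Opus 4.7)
The plan is to combine the interior derivative estimates (Proposition \ref{interior derivative estimates}) with a classical boundary estimate coming from Beurling's projection theorem. By rescaling we may assume $L = \diam(\Omega) = 1$, and by subtracting a constant we may assume $h = 0$ at some point of $\partial \Omega$, so that $|h| \leq 1$ on $\overline{\Omega}$ by the maximum principle (Proposition \ref{continuous max principle}).

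The first step would be to prove a boundary estimate: for every $x \in \overline{\Omega}$ with nearest boundary point $x^* \in \partial\Omega$ and $\delta_x := |x - x^*| = \dist(x, \partial \Omega)$, one has $|h(x) - g(x^*)| \leq C\sqrt{\delta_x}$. I would use the probabilistic representation $h(x) = \Exp_x[g(B_\tau)]$, where $B$ is planar Brownian motion and $\tau$ is its exit time from $\Omega$, together with the Lipschitz hypothesis on $g$, to reduce the problem to bounding $\Exp_x|B_\tau - x^*|$. The essential input is Beurling's projection theorem, which for simply connected $\Omega$ yields the tail bound
$$\Prob_x\bigl(|B_\tau - x| > R\bigr) \leq C_0 \sqrt{\delta_x / R} \qquad \text{for all } R > \delta_x.$$
Integrating this tail over $R \in [\delta_x, \diam(\Omega)]$ gives $\Exp_x |B_\tau - x| \leq C\sqrt{\delta_x \diam(\Omega)} = C\sqrt{\delta_x}$ in the normalized setting, from which the boundary estimate follows after adding $|x - x^*| = \delta_x$ and using the Lipschitz property of $g$.

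Next, I would upgrade the boundary estimate to a local oscillation bound in the interior: for $z \in \Omega$ and $r := \delta_z/2$, every $w \in B(z, r) \subset \Omega$ satisfies $\delta_w \leq 2\delta_z$ and has its nearest boundary point $w^*$ within distance $O(\delta_z)$ of $z^*$. Combining the boundary estimate applied to both $h(w)$ and $h(z)$ with the Lipschitz bound $|g(w^*) - g(z^*)| \leq |w^* - z^*|$ gives $\sup_{w \in B(z, r)} |h(w) - h(z)| \leq C\sqrt{\delta_z}$. Applying Proposition \ref{interior derivative estimates} to the harmonic function $h - h(z)$ on the ball $B(z, r)$ then yields the interior gradient bound $|\nabla h(z)| \leq (2/r) \cdot C\sqrt{\delta_z} \leq C'/\sqrt{\delta_z}$.

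Finally, fix $x, y \in \overline{\Omega}$ with $\rho = |x - y|$ and split into two cases. If $\delta_x \geq 2\rho$, then the segment $[x, y]$ lies inside $\Omega$ and $\delta_z \geq \rho$ at every point $z$ along it, so integrating the gradient bound gives $|h(x) - h(y)| \leq \rho \cdot C/\sqrt{\rho} = C\sqrt{\rho}$. Otherwise $\delta_x < 2\rho$, and hence $\delta_y \leq \delta_x + \rho \leq 3\rho$; applying the boundary estimate to both endpoints and the Lipschitz bound $|g(x^*) - g(y^*)| \leq |x^* - y^*| \leq 5\rho$ again yields $|h(x) - h(y)| \leq C\sqrt{\rho}$. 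Restoring the original values of $L$ and $\diam(\Omega)$ produces the claimed bound. The main obstacle is the careful invocation of Beurling's projection theorem with the right quantitative tail; this is the source of the exponent $1/2$ and the step where simple connectedness of $\Omega$ is essential.
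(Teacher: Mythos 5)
The paper's proof is a one-line citation: it invokes equation (1.10) of Hinkkanen \cite{H88} directly, obtaining the explicit constant $C = 49$. Your proposal is a genuinely different, self-contained route built out of the probabilistic representation $h(x) = \Exp_x[g(B_\tau)]$, Beurling's projection theorem, the local oscillation bound, and the interior gradient estimate (Proposition \ref{interior derivative estimates}). The argument is sound: the tail bound $\Prob_x(|B_\tau - x| > R) \leq C_0\sqrt{\delta_x/R}$ does follow from Beurling's projection theorem for simply connected $\Omega$ (since $\hat{\C}\setminus\Omega$ is a nondegenerate continuum touching $\partial B(x,\delta_x)$ and $\partial B(x,R)$ for all $\delta_x \le R \le \diam\Omega$); the integration of the tail over $R \in [\delta_x,\diam\Omega]$ correctly produces $\Exp_x|B_\tau-x| \lesssim \sqrt{\delta_x\diam\Omega}$; the oscillation bound on $B(z,\delta_z/2)$ feeds into Proposition \ref{interior derivative estimates} as you describe to give $|\nabla h(z)| \lesssim \delta_z^{-1/2}$; and the final two-case split (integrate the gradient along $[x,y]$ if $\delta_x \ge 2\rho$, go to the boundary otherwise) is a standard and correct way to assemble a global H\"older bound from an interior gradient bound plus a boundary modulus. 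One implicit point worth flagging: the identification of the given $h$ with the Brownian representation requires all boundary points of $\Omega$ to be regular; this holds automatically for bounded simply connected planar domains because $\hat{\C}\setminus\Omega$ is a nondegenerate continuum. The trade-off between the two approaches is that citing \cite{H88} yields a sharper explicit constant and accommodates more general boundary moduli of continuity (Hinkkanen's theorem is stated for arbitrary $\omega$, not just Lipschitz), whereas your argument is self-contained modulo Beurling and makes transparent exactly where the simply connected hypothesis and the exponent $1/2$ enter.
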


\begin{proof}
Follows (with $C = 49$) from equation (1.10) in \cite{H88}.
\end{proof}

Finally, we will use that harmonic functions minimize the \textbf{Dirichlet energy} $\int_\Omega |\nabla h|^2$ over the set of all functions with the same boundary values. A form of this statement that holds for any bounded simply connected domain without conditions on the smoothness of the boundary is proved in \cite[Ch.~II, \S 7, Proposition 10]{DL90}.

\begin{prop}[Dirichlet's principle, continuous form] \label{energy minimization}
Let $\Omega \subset \R^2$ be a bounded simply connected domain, and let the continuous function $g: \overline{\Omega} \to \R$ satisfy $g \in C^1(\Omega)$ and $\int_\Omega |\nabla g|^2 < \infty$. Let $h: \overline{\Omega} \to \R$ be harmonic on $\Omega$ and satisfy $h = g$ on $\partial \Omega$. Then,
\[
\int_\Omega |\nabla h|^2 \leq \int_\Omega |\nabla g|^2 \, .
\]
\end{prop}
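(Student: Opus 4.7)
The plan is to use a polarization argument. Set $u = g - h$; this function is continuous on $\overline{\Omega}$, belongs to $C^1(\Omega)$, and vanishes on $\partial \Omega$. The identity
\[
\int_\Omega |\nabla g|^2 = \int_\Omega |\nabla h|^2 + 2\int_\Omega \nabla h \cdot \nabla u + \int_\Omega |\nabla u|^2
\]
shows that, once we know the integrals on the right-hand side are finite, the desired inequality follows from the nonnegativity of the last term and from the orthogonality relation $\int_\Omega \nabla h \cdot \nabla u = 0$. This is the classical ``harmonic perpendicular to $H^1_0$'' statement, which I would prove by exhibiting $u$ as an $H^1$-limit of compactly supported test functions.

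The main step is a truncation. For each $\e > 0$ set $u_\e(x) = \operatorname{sign}(u(x)) \max(|u(x)| - \e, 0)$. Since $u$ is continuous on the compact set $\overline{\Omega}$ and vanishes on $\partial \Omega$, the set $\{|u| \geq \e\}$ is a compact subset of $\Omega$, so $u_\e$ is Lipschitz with compact support in $\Omega$ and $\nabla u_\e = \nabla u \cdot \mathbf{1}_{\{|u| > \e\}}$ almost everywhere. Mollifying, one obtains $\phi \in C_c^\infty(\Omega)$ arbitrarily close in $H^1$ to $u_\e$; because $h$ is harmonic (hence $C^\infty$) on $\Omega$, Green's identity immediately gives
\[
\int_\Omega \nabla h \cdot \nabla \phi = -\int_\Omega \phi\, \Delta h = 0.
\]
Letting the mollification parameter tend to zero (which is valid since $\nabla h \in L^2_{\mathrm{loc}}(\Omega)$) yields $\int_\Omega \nabla h \cdot \nabla u_\e = 0$. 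Finally, $\nabla u_\e \to \nabla u$ pointwise and $|\nabla u_\e| \leq |\nabla u|$, so dominated convergence gives $\int_\Omega \nabla h \cdot \nabla u = 0$ as required.

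The main obstacle is justifying the last step in the absence of any regularity of $\partial \Omega$: to apply dominated convergence one needs $\nabla h, \nabla u \in L^2(\Omega)$, which are themselves the conclusion one is trying to reach (for $\nabla h$) rather than automatic. To bootstrap, I would run the truncation argument on an exhaustion $\Omega_n \uparrow \Omega$ by smooth subdomains: on each $\Omega_n$ the argument above gives $\int_{\Omega_n} |\nabla h|^2 \leq \int_{\Omega_n} |\nabla g|^2 + o_n(1)$, and monotone convergence then yields $\int_\Omega |\nabla h|^2 \leq \int_\Omega |\nabla g|^2 < \infty$, after which the orthogonality argument closes. Making this passage to the limit rigorous for an arbitrary bounded simply connected domain is somewhat delicate because one has to control the fact that $u$ only vanishes on $\partial \Omega$, not on any $\partial \Omega_n$; this is precisely the content of Dautray--Lions \cite[Ch.~II, \S 7, Proposition 10]{DL90}, which I would cite for the detailed execution.
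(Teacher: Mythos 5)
The paper gives no proof of this proposition at all: it simply cites Dautray--Lions \cite[Ch.~II, \S 7, Proposition 10]{DL90}. Your outline (polarization identity, truncation $u_\e$ with compact support, mollification and Green's identity to obtain the orthogonality $\int_\Omega \nabla h\cdot\nabla u = 0$, and the need to bootstrap $\nabla h \in L^2(\Omega)$ via an exhaustion before dominated convergence can close the argument) is the standard classical route, and since you defer the genuinely delicate step to the very same reference, the two approaches coincide.
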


\section{Electric networks with multiple sources and sinks}
\label{Electric networks}

The theory of electric networks is an important tool to understand the behavior of random walks on graphs. See \cite{DS84} and \cite[Ch.~2]{LP16} for textbook treatments. The perspective taken by \cite{LP16} is linear-algebraic: results such as the discrete Dirichlet's principle and Thomson's principle follow immediately from the definition of certain orthogonal subspaces and projection operators in an inner product space. This approach was originally developed by \cite{BLPS01} and proved quite useful in their study of uniform spanning trees and forests on infinite graphs.

A \textbf{network} is a graph $G = (V,E)$ together with a function $c: E \to (0,\infty)$. We call $c(e)$ the \textbf{conductance} of the edge $e$, and its reciprocal $r(e) = 1/c(e)$ the \textbf{resistance} of $e$. For each $x \in V$, define the \textbf{stationary measure}
\[
\pi(x) = \sum_{e \in \vec{E} \,:\, e^- = x} c(e) \, .
\]
The \textbf{random walk} on the network $(G,c)$ is the discrete time Markov chain $(X_n)$ on the state space $V$ with transition probabilities
\[
P(x,y) = \frac{1}{\pi(x)} \sum_{\substack{e \in \vec{E} \\ e^- = x,\, e^+ = y}} c(e) \, .
\]
This chain satisfies the detailed balance equations $\pi(x) P(x,y) = \pi(y) P(y,x)$.

The function $f: V \to \R$ is called \textbf{discrete harmonic} at $x \in V$ if $\Exp_x[f(X_1)] = f(x)$, where $\Exp_x$ is the expectation for the Markov chain $(X_n)$ started at $X_0 = x$. This condition is equivalent to the equation
\[
f(x) \sum_{e \in \vec{E} \,:\, e^- = x} c(e) = \sum_{e \in \vec{E} \,:\, e^- = x} f(e^+) c(e) \, .
\]
The definition \eqref{def:discreteharmonicortho} is simply the special case of this equation when the conductances are given by \eqref{conductances}.

We say that $f$ is discrete harmonic on $U \subseteq V$ if it is discrete harmonic at each $x \in U$. The following fundamental results can be found in \cite[Section 2.1]{LP16}.

\begin{prop}[Discrete maximum principle]
\label{max principle}
On a finite network, let $h: V \to \R$ be discrete harmonic on $U \subseteq V$. Let $M$ be the maximum value of $h$ on $V$. If $h(x) = M$ for some $x \in U$, then $h(y) = M$ for all $y \in V$ such that there is a path in the network from $x$ to $y$ whose internal vertices are all in $U$.
\end{prop}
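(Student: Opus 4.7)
The plan is to establish the result by propagating the maximum one step at a time along the path. The key observation is that if $x \in U$ with $h(x) = M$, then every neighbor of $x$ in the network also attains the value $M$; iterating this along any path whose internal vertices all lie in $U$ gives the conclusion.

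First I would prove the one-step statement: if $x \in U$ and $h(x) = M$, then $h(y) = M$ for every $y \in V$ joined to $x$ by an edge of positive conductance. Writing the discrete harmonicity condition at $x$ as
\[
M \cdot \pi(x) = h(x) \sum_{e \in \vec{E},\, e^- = x} c(e) = \sum_{e \in \vec{E},\, e^- = x} h(e^+)\, c(e),
\]
and using that $h(e^+) \leq M$ for all $e$ while $c(e) > 0$ and $\sum c(e) = \pi(x)$, the equality forces $h(e^+) = M$ for every outgoing directed edge $e$ from $x$. Hence every neighbor $y$ of $x$ in $G$ satisfies $h(y) = M$.

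Now let $y \in V$ and let $x = x_0, x_1, \ldots, x_k = y$ be a path in the network with $x_1, \ldots, x_{k-1} \in U$. I would argue by induction on $j$ that $h(x_j) = M$ for all $0 \leq j \leq k$. The base case $j = 0$ is the hypothesis. For the inductive step, if $h(x_j) = M$ and $j \leq k-1$, then $x_j \in U$ (either $j = 0$ with $x_0 = x \in U$, or $1 \leq j \leq k-1$ which is internal), so the one-step statement applied at $x_j$ yields $h(x_{j+1}) = M$, since $x_{j+1}$ is a neighbor of $x_j$. Taking $j = k$ gives $h(y) = M$, completing the proof.

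There is no serious obstacle here: the entire argument is a direct consequence of the discrete harmonicity identity combined with the fact that a weighted average of quantities bounded above by $M$ equals $M$ only when each quantity equals $M$. The only mild subtlety is making sure the induction uses the hypothesis correctly — the endpoints $x$ and $y$ of the path need not be in $U$, but the harmonic-averaging step is only applied at the internal vertices and at the starting vertex $x$, both of which do lie in $U$.
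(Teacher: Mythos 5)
Your proof is correct. Note that the paper does not actually supply a proof of this proposition---it cites \cite[Section 2.1]{LP16} and moves on---so there is no in-paper argument to compare against; your one-step averaging observation followed by induction along the path (applying the averaging step at $x$ and at the internal vertices, which all lie in $U$) is exactly the standard argument one finds in that reference and is complete.
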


In particular, if $G$ is connected and $h$ is discrete harmonic on all of $V$, then $h$ must be a constant function.

\begin{prop}[Existence and uniqueness of discrete harmonic extensions]
\label{discrete harmonic}
On a finite connected network, fix a proper subset $U \subset V$ and a function $g: V \setminus U \to \R$. There is a unique function $h: V \to \R$ such that $h = g$ on $V \setminus U$ and $h$ is discrete harmonic on $U$. If $(X_n)$ is the random walk on the network and $\tau = \min\{n \geq 0 : X_n \in V \setminus U\}$, then $h(x) = \Exp_x[g(X_\tau)]$ for all $x \in V$.
\end{prop}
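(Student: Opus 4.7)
The plan is to use the probabilistic formula $h(x) = \Exp_x[g(X_\tau)]$ to establish existence, and then deduce uniqueness directly from the discrete maximum principle (Proposition \ref{max principle}). First I would verify that the formula is well-defined: since $V$ is finite, $V \setminus U$ is nonempty, and $G$ is connected, the random walk on $(G,c)$ is an irreducible Markov chain on a finite state space, so $\Prob_x(\tau < \infty) = 1$ and in fact $\Exp_x[\tau] < \infty$ for every $x \in V$. Hence $\Exp_x[g(X_\tau)]$ makes sense.

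Define $h: V \to \R$ by $h(x) = \Exp_x[g(X_\tau)]$. For $x \in V \setminus U$ we have $\tau = 0$ under $\Prob_x$, so $h(x) = g(x)$. For $x \in U$ I would condition on $X_1$: by splitting according to whether $X_1 \in V \setminus U$ (in which case $\tau = 1$ and $g(X_\tau) = g(X_1) = h(X_1)$) or $X_1 \in U$ (in which case $\tau \geq 1$ and the strong Markov property gives $\Exp_x[g(X_\tau) \mid X_1 = y] = \Exp_y[g(X_\tau)] = h(y)$), one obtains
\[
h(x) = \sum_{y \in V} P(x,y) h(y) = \Exp_x[h(X_1)] \, ,
\]
so $h$ is discrete harmonic at $x$. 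This proves existence and simultaneously establishes the formula.

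For uniqueness, suppose $h_1, h_2$ both satisfy the conditions of the proposition, and set $f = h_1 - h_2$. Then $f$ is discrete harmonic on $U$ and vanishes identically on $V \setminus U$. Let $M = \max_{x \in V} f(x)$ and suppose $M > 0$. Any vertex where $f$ attains $M$ must lie in $U$; pick such a vertex $x$ and let $z \in V \setminus U$ (which is nonempty). Since $G$ is connected, there is a path from $x$ to $z$; by taking a shortest such path, all internal vertices lie in $U$. Proposition \ref{max principle} then forces $f(z) = M > 0$, contradicting $f|_{V \setminus U} = 0$. Applying the same argument to $-f$ yields $\min f \geq 0$, so $f \equiv 0$.

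There is no real obstacle here: the whole argument is standard finite-state Markov chain theory together with a careful invocation of the discrete maximum principle, whose path-based formulation in Proposition \ref{max principle} is precisely what makes the uniqueness step work when $U$ can be an arbitrary proper subset (not just the complement of a single vertex). The only mild care required is in the uniqueness step, to ensure that the extremal vertex in $U$ is connected to $V \setminus U$ by a path whose interior stays inside $U$, which follows immediately by taking a shortest path.
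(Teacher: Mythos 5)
Your approach is the standard probabilistic construction, and the paper itself gives no proof of this proposition---it simply cites \cite[Section~2.1]{LP16}. So there is no ``paper proof'' to compare against; what you have written is a self-contained and essentially correct argument for a result the paper treats as a black box.

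The existence half is fine: $\tau < \infty$ a.s.\ by irreducibility on a finite state space, $h(x)=g(x)$ trivially for $x\in V\setminus U$, and one-step conditioning plus the Markov property (the plain Markov property at time $1$ suffices; you do not actually need the strong version) gives $h(x)=\Exp_x[h(X_1)]$ for $x\in U$, which is exactly the definition of discrete harmonicity used in the paper.

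There is one imprecision in the uniqueness step. You fix $z\in V\setminus U$ and then take a shortest path from $x$ to $z$, asserting that its internal vertices lie in $U$. That is not automatic: a shortest $x$--$z$ path may pass through other vertices of $V\setminus U$. What you want is a shortest path from $x$ to the \emph{set} $V\setminus U$, and then let $z$ be its endpoint; equivalently, take any $x$--$z$ path and truncate it at the first vertex it meets in $V\setminus U$. Either fix makes the hypothesis of Proposition~\ref{max principle} hold and the rest of your argument goes through. An alternative uniqueness argument avoids this care entirely: for any $f$ discrete harmonic on $U$, the process $f(X_{n\wedge\tau})$ is a bounded martingale, so $f(x)=\Exp_x[f(X_\tau)]$; applied to $f=h_1-h_2$, which vanishes on $V\setminus U$, this gives $f\equiv 0$ directly and in the same stroke re-derives the formula $h(x)=\Exp_x[g(X_\tau)]$ for \emph{any} solution.
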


The function $h$ is called the \textbf{discrete harmonic extension} of $g$, or the solution to the \textbf{discrete Dirichlet problem} on $U$ with boundary data $g$. \\

The rest of this section develops the linear-algebraic framework of \cite{BLPS01,LP16} and applies it to the case of a finite network with specified voltages at an arbitrary subset of vertices. We may think of this subset as the ``boundary'' and its complement as the ``interior.'' By Proposition \ref{discrete harmonic}, there is a unique voltage function that matches the specified values on the boundary and is discrete harmonic on the interior. Its discrete gradient is the current flow associated with the network. (See definitions below.) Our main result, Proposition \ref{sandwich}, can be informally stated as follows. Suppose that a flow on the interior vertices is close, in a suitable sense, to the discrete gradient of some function. Then both the flow and the discrete gradient are close to the current flow induced by the boundary values of the function. Except for Proposition \ref{sandwich}, all results in this section are contained explicitly or implicitly in \cite[Ch.~2]{LP16}.

Our object of study is a finite connected network $(G = (V,E),c)$. Physically, we imagine that each edge $e \in E$ is a wire with electric resistance $r(e) = 1/c(e)$. Given any $f: V \to \R$, its \textbf{discrete gradient} is the function $c\,df : \vec{E} \to \R$ given by $c\,df(e) = c(e)[f(e^+) - f(e^-)]$. This definition is designed to agree with Ohm's Law: the current through an edge $e$ is the product of the conductance $c(e)$ with the voltage difference between the tail and the head of $e$. With this definition, we impose the convention that current travels from vertices of lower voltage to vertices of higher voltage, which is the opposite of the convention used by \cite{LP16}.

Fix a proper subset $U \subset V$ and a function $g: V \setminus U \to \R$. From the electric perspective, the discrete harmonic extension $h: V \to \R$ of $g$ is called the \textbf{voltage function}, and its discrete gradient $c\,dh$ is the \textbf{current flow}, associated with the network and the specified boundary values.

A function $\theta: \vec{E} \to \R$ is \textbf{antisymmetric} if $\theta(-e) = -\theta(e)$ for all $e \in \vec{E}$. The space of all antisymmetric functions from $\vec{E}$ to $\R$ will be denoted by $\ell^2_-(\vec{E})$, with the inner product
\[
(\theta, \varphi)_r = \frac{1}{2} \sum_{e \in \vec{E}} \theta(e) \varphi(e) r(e) \, .
\]
The factor of $\frac{1}{2}$ is because $\theta(e) \varphi(e) r(e) = \theta(-e) \varphi(-e) r(-e)$, so each term in the sum appears twice. The \textbf{energy} of $\theta \in \ell^2_-(\vec{E})$ is
\[
\EE(\theta) = \|\theta\|_r^2 = (\theta, \theta)_r \, .
\]
Since every discrete gradient is antisymmetric, we can also define the energy of a function $f: V \to \R$ by
\[
\EE(f) = \EE(c\,df) = \frac{1}{2} \sum_{e \in \vec{E}} c(e) [f(e^+) - f(e^-)]^2 \, .
\]
This is the discrete analogue of the continuous notion of Dirichlet energy from Section \ref{Continuous harmonic functions}.

Given $e \in \vec{E}$, we write $\chi^e = \1_{\{e\}} - \1_{\{-e\}} \in \ell^2_-(\vec{E})$. Similarly, if $\gamma = (e_1,\ldots,e_k)$ is a directed path in $G$, we write $\chi^\gamma = \chi^{e_1} + \cdots + \chi^{e_k}$. The \textbf{star} at $x \in V$ is
\[
\star_x = \sum_{e^- = x} c(e) \chi^e \, .
\]
We record two useful facts. First, the inner product of any $\theta \in \ell^2_-(\vec{E})$ with $\star_x$ is
\[
(\theta, \star_x)_r = \sum_{e^- = x} \theta(e)\, ,
\]
the net flow out of the vertex $x$. Second is the following lemma.

\begin{lemma}
\label{star derivative}
On a finite network, given a function $f:V \to \R$, we have
\[
c\,df = -\sum_{x \in V} f(x) \star_x\, .
\]	
\end{lemma}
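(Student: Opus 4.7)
The claim reduces to a pointwise identity on directed edges: it suffices to show that for every fixed $e \in \vec{E}$,
\[
-\sum_{x \in V} f(x)\, \star_x(e) \;=\; c(e)\bigl[f(e^+) - f(e^-)\bigr].
\]
My plan is to evaluate the left-hand side by unfolding the definition of $\star_x$ and identifying which terms actually touch the edge $e$.

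Expanding $\star_x = \sum_{e'^- = x} c(e')\, \chi^{e'}$ and swapping the order of summation gives
\[
\sum_{x \in V} f(x)\, \star_x \;=\; \sum_{e' \in \vec{E}} f(e'^-)\, c(e')\, \chi^{e'},
\]
since every directed edge $e'$ has a unique tail vertex $x = e'^-$. Now I would evaluate this antisymmetric function at the specific directed edge $e$. Because $\chi^{e'}(e) = \1_{\{e' = e\}} - \1_{\{e' = -e\}}$, only the two terms $e' = e$ and $e' = -e$ survive in the sum. Recalling that $c$ is symmetric under edge reversal (we view it as a function on $\vec{E}$ that is constant on $\{e,-e\}$) and that $(-e)^- = e^+$, those two terms contribute
\[
f(e^-)\, c(e) \cdot 1 \;+\; f(e^+)\, c(e) \cdot (-1) \;=\; -c(e)\bigl[f(e^+) - f(e^-)\bigr].
\]
Negating both sides yields $-\sum_x f(x)\, \star_x(e) = c\,df(e)$, which is the desired identity.

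There is no real obstacle here; the lemma is a direct consequence of unwinding definitions. The only mildly subtle point is keeping careful track of the two conventions: $c$ is symmetric on $\vec{E}$ while $\chi^{e'}$ is antisymmetric, which is exactly what produces the correct signs $f(e^+) - f(e^-)$ and the overall minus sign. Once the double sum is reorganized by grouping terms according to which directed edge they contribute to, the conclusion is immediate.
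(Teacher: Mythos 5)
Your proof is correct and is essentially the same argument as the paper's, just organized in the opposite direction: the paper first verifies $c\,d\1_{\{x\}} = -\star_x$ and then appeals to linearity via $f = \sum_{x}f(x)\1_{\{x\}}$, whereas you expand the double sum over vertices and directed edges and evaluate at a fixed $e$. Both are direct definitional unwindings, and your handling of the edge-reversal symmetry of $c$ versus the antisymmetry of $\chi^{e'}$ is exactly the bookkeeping the paper leaves implicit.
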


\begin{proof}
For each $x \in V$, we compute directly that $c\,d\1_{\{x\}} = -\star_x$. The result follows by writing $f = \sum_{x \in V} f(x) \1_{\{x\}}$ and using linearity of the discrete gradient.
\end{proof}

Two linear subspaces of $\ell^2_-(\vec{E})$, the star space and the cycle space, are fundamental to the linear-algebraic approach of \cite{BLPS01} and \cite{LP16}. The \textbf{star space} is the subspace of $\ell^2_-(\vec{E})$ spanned by the stars:
\[
\bigstar = \Span \langle \star_x : x \in V \rangle\, .
\]
The \textbf{cycle space} is the subspace of $\ell^2_-(\vec{E})$ spanned by the cycles:
\[
\diamondsuit = \Span \langle \chi^\gamma : \text{$\gamma$ is a directed cycle in $G$} \rangle\, .
\]
Given a subset of vertices $U \subseteq V$, we also define the span of its stars:
\[
\bigstar_U = \Span \langle \star_x : x \in U \rangle\, .
\]

The star and cycle spaces are closely related to two conditions that characterize electric currents. We say that $\theta \in \ell^2_-(\vec{E})$ satisfies Kirchhoff's \textbf{node law} at $x \in V$ if
\[
\sum_{e^- = x} \theta(e) = 0\, ,
\]
or equivalently, if $\theta$ is orthogonal to $\star_x$. Given $U \subseteq V$, a \textbf{flow} on $U$ is a function $\theta \in \ell^2_-(\vec{E})$ that satisfies the node law at each $x \in U$. Thus, $\theta$ is a flow on $U$ if and only if $\theta \in \bigstar_U^\perp$, the orthogonal complement of $\bigstar_U$. For the other condition, we say that $\theta$ satisfies Kirchhoff's \textbf{cycle law} if for every directed cycle $(e_1,\ldots,e_k)$ in $G$,
\[
\sum_{i=1}^k r(e_i) \theta(e_i) = 0\, .
\]
It is immediate that $\theta$ satisfies the cycle law if and only if $\theta \in \diamondsuit^\perp$.

\begin{lemma} \label{cycle law}
On a finite connected network, $\theta \in \ell^2_-(\vec{E})$ satisfies the cycle law if and only if $\theta$ is the discrete gradient of some $f: V \to \R$. Given $\theta$, the function $f$ is unique up to an additive constant.
\end{lemma}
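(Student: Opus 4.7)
The proof splits into three parts: the easy direction, the existence of $f$, and uniqueness.

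\emph{Easy direction.} Suppose $\theta = c\,df$. For any directed cycle $(e_1, \ldots, e_k)$, we compute
\[
\sum_{i=1}^k r(e_i)\theta(e_i) \;=\; \sum_{i=1}^k r(e_i) c(e_i)\bigl[f(e_i^+) - f(e_i^-)\bigr] \;=\; \sum_{i=1}^k \bigl[f(e_i^+) - f(e_i^-)\bigr],
\]
which telescopes to $0$ because $e_{i+1}^- = e_i^+$ for $i < k$ and $e_1^- = e_k^+$.

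\emph{Existence.} Assume $\theta$ satisfies the cycle law. Fix a base vertex $x_0 \in V$. Since $G$ is connected, for each $y \in V$ there is a directed path $\gamma_y = (e_1, \ldots, e_k)$ from $x_0$ to $y$, and I would set
\[
f(y) \;=\; \sum_{i=1}^k r(e_i)\theta(e_i).
\]
The key step is showing this is independent of the choice of path. Given two such paths, concatenating one with the reverse of the other produces a closed directed walk at $x_0$, so it suffices to prove that $\sum_i r(e_i)\theta(e_i)$ vanishes along any closed walk. This follows by induction on the length: a backtrack $(e_i, -e_i)$ contributes zero by antisymmetry and can be excised; a non-backtracking walk that revisits a vertex splits at the repetition into two shorter closed walks; and a simple closed walk is a directed cycle, for which the cycle law applies directly. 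Once $f$ is well-defined, the identity $c\,df = \theta$ is immediate: for any $e \in \vec{E}$, concatenating $\gamma_{e^-}$ with $e$ gives a path from $x_0$ to $e^+$, so $f(e^+) = f(e^-) + r(e)\theta(e)$, and multiplying by $c(e)$ yields $c(e)[f(e^+) - f(e^-)] = \theta(e)$.

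\emph{Uniqueness.} If $c\,df_1 = c\,df_2$, then $g := f_1 - f_2$ satisfies $g(e^+) = g(e^-)$ for every directed edge, so $g$ is constant on each connected component. Since $G$ is connected, $g$ is constant on $V$.

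\emph{Main obstacle.} The only substantive point is the path-independence of $f$, which requires passing from the cycle law (stated for directed cycles) to the vanishing of $\sum r(e_i)\theta(e_i)$ along an arbitrary closed walk. The induction sketched above handles this, but one must treat backtracks, vertex repetitions, and simple cycles as separate cases. A cleaner alternative, which I might use if the induction becomes cumbersome, is to fix a spanning tree $T$ rooted at $x_0$, define $f(y)$ by the unique tree path, and then verify $c\,df(e) = \theta(e)$ on non-tree edges $e$ by applying the cycle law to the fundamental cycle in $T \cup \{e\}$; on tree edges the equality is built into the definition.
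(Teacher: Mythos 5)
Your proof is correct and takes essentially the same approach as the paper's: fix a base vertex $x_0$, define $f$ by summing $r(e_i)\theta(e_i)$ along a directed path, and check the definition is path-independent. The paper states path-independence as a one-line consequence of the cycle law; you fill in the reduction from arbitrary closed walks to simple cycles (and note the spanning-tree alternative), which is the right gap to close, with only the minor quibble that concatenating $\gamma_{e^-}$ with $e$ produces a walk rather than a path — but your closed-walk argument already gives walk-independence, so this is harmless.
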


\begin{proof}
If $\theta = c\,df$, then the sum in the definition of the cycle law telescopes to zero. Conversely, assume that $\theta$ satisfies the cycle law and fix $x_0 \in V$. Choose any value for $f(x_0)$. For $x \neq x_0$, let $(e_1,\ldots,e_k)$ be a directed path in $G$ from $x_0$ to $x$ and set
\begin{equation}
\label{cycle construction}
f(x) = f(x_0) + \sum_{i=1}^k r(e_i) \theta(e_i)\, .
\end{equation}
By the cycle law, the value of the sum does not depend on the choice of directed path $(e_1,\ldots,e_k)$. An appropriate choice of paths shows that $c\,df = \theta$. Also, since condition \eqref{cycle construction} is necessary for $c\,df = \theta$, $f$ is unique up to the choice of $f(x_0)$.
\end{proof}

\begin{lemma} \label{node law}
On a finite network, let $f: V \to \R$ and let $\theta = c\,df$. Then, $\theta$ satisfies the node law at $x \in V$ if and only if $f$ is discrete harmonic at $x$.	
\end{lemma}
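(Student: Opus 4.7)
The plan is to prove this lemma by directly unwinding the definitions of each side and observing that they coincide. Both the node law at $x$ and discrete harmonicity at $x$ are single scalar equations involving sums over the directed edges emanating from $x$, so there is nothing subtle to do: I simply substitute $\theta = c\,df$ into one and compare with the other.

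First I would write the node law at $x$ applied to $\theta = c\,df$:
\[
\sum_{e^- = x} \theta(e) \;=\; \sum_{e^- = x} c(e)\bigl[f(e^+) - f(e^-)\bigr] \;=\; \sum_{e^- = x} c(e) f(e^+) \;-\; f(x) \sum_{e^- = x} c(e).
\]
Using the definition $\pi(x) = \sum_{e^- = x} c(e)$ from Section \ref{Electric networks}, the right-hand side equals $\sum_{e^- = x} c(e) f(e^+) - f(x) \pi(x)$.

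Setting this expression equal to zero gives exactly the equation
\[
f(x) \sum_{e^- = x} c(e) \;=\; \sum_{e^- = x} c(e) f(e^+),
\]
which is the definition of $f$ being discrete harmonic at $x$ (equivalently, $\Exp_x[f(X_1)] = f(x)$, obtained by dividing by $\pi(x)$). Thus $\theta$ satisfies the node law at $x$ if and only if $f$ is discrete harmonic at $x$, completing the proof.

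There is really no obstacle here; the lemma is essentially a restatement of the definition of discrete harmonicity in the electric-network language, and it serves as the bridge that lets one apply the star/cycle space formalism (via $c\,df \in \bigstar_U^\perp$) to harmonic extensions. The only thing to take care of is to keep the orientation convention consistent — in particular that $\theta(e) = c(e)[f(e^+)-f(e^-)]$ and that the sum in the node law is over directed edges with tail $x$, so that the subtracted term is $f(x)$ rather than $f(e^+)$.
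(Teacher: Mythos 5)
Your proof is correct and is exactly the intended argument: the paper's own proof is simply ``Follows immediately from the definitions,'' and you have written out that one-line computation in detail, substituting $\theta = c\,df$ into the node law and recognizing the resulting identity as the definition of discrete harmonicity at $x$.
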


\begin{proof}
Follows immediately from the definitions.	
\end{proof}

One consequence of Lemma \ref{node law} is that the current flow associated with the discrete harmonic extension of a function $g: V \setminus U \to \R$ is a flow on $U$.

\begin{lemma} \label{star-cycle decomposition}
On a finite connected network, $\ell^2_-(\vec{E}) = \bigstar \oplus \diamondsuit$.	
\end{lemma}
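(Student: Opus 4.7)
The plan is to establish the decomposition in two parts: first, that $\bigstar$ and $\diamondsuit$ are orthogonal with respect to $(\cdot,\cdot)_r$; second, that together they span $\ell^2_-(\vec{E})$. Orthogonality immediately implies $\bigstar \cap \diamondsuit = \{0\}$, and combined with spanning yields the direct sum.

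For orthogonality, I would use the formula $(\theta, \star_x)_r = \sum_{e^- = x} \theta(e)$ stated just before the lemma. For any directed cycle $\gamma = (e_1, \ldots, e_k)$ and any vertex $x$,
\[
(\chi^\gamma, \star_x)_r = \sum_{e^- = x} \chi^\gamma(e) = 0,
\]
since each visit of $\gamma$ to $x$ contributes $+1$ from the outgoing edge $e_{i+1}$ and $-1$ from the reverse of the incoming edge $e_i$, and these cancel. Bilinearity then gives $\bigstar \perp \diamondsuit$.

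For the sum, it suffices to show $\bigstar^\perp \subseteq \diamondsuit$, since then $\ell^2_-(\vec E) = \bigstar \oplus \bigstar^\perp \subseteq \bigstar + \diamondsuit$. Take $\theta \in \bigstar^\perp$, so that $\theta$ satisfies the node law at every vertex. I would decompose $\theta$ into cycle indicators by induction on $|\mathrm{supp}(\theta)|$, the number of undirected edges where $\theta$ is nonzero. The base case $\theta = 0$ is trivial. Otherwise, pick $e_1 \in \vec E$ with $\theta(e_1) > 0$. By the node law at $e_1^+$, and since $\theta(-e_1) < 0$, there exists some $e_2$ with $e_2^- = e_1^+$ and $\theta(e_2) > 0$. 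Iterating produces an infinite walk $(e_1, e_2, \ldots)$ with $\theta > 0$ on every edge; because $V$ is finite, some vertex repeats, and truncating at the first repetition yields a directed cycle $\gamma$ on which $\theta$ is strictly positive.

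Set $m = \min_{e \in \gamma} \theta(e) > 0$ and $\theta' = \theta - m\chi^\gamma$. Since $\chi^\gamma$ satisfies the node law (by the orthogonality step, applied in reverse), $\theta'$ does too. Moreover, $\theta'$ agrees with $\theta$ off the edges of $\gamma$ and vanishes on at least the minimizing edge of $\gamma$, so $|\mathrm{supp}(\theta')| < |\mathrm{supp}(\theta)|$. By induction $\theta' \in \diamondsuit$, hence $\theta = \theta' + m\chi^\gamma \in \diamondsuit$, completing the proof.

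The one slightly subtle step is the cycle-extraction in the inductive decomposition: one must verify that the greedy walk can always be extended and that subtracting $m\chi^\gamma$ strictly reduces the support without creating new nonzero entries. Both points follow from the fact that $\chi^\gamma$ is supported on $\pm\gamma$ and $\theta$ is already strictly positive on every edge of $\gamma$, so no cancellation outside of $\gamma$ occurs and the minimizing edge is killed. Note that connectedness of $G$ is not actually used in this argument, although it is convenient elsewhere in the section.
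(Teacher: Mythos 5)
Your proof is correct, but it takes a genuinely different route from the paper's. For the completeness part (showing $\bigstar + \diamondsuit = \ell^2_-(\vec E)$), the paper argues indirectly: it takes $\theta \in (\bigstar \oplus \diamondsuit)^\perp$, notes that $\theta$ then satisfies both the cycle law and the node law everywhere, invokes Lemma \ref{cycle law} to write $\theta = c\,df$, Lemma \ref{node law} to conclude $f$ is harmonic on all of $V$, and the maximum principle (Proposition \ref{max principle}) to force $f$ to be constant and hence $\theta = 0$. You instead prove the constructive inclusion $\bigstar^\perp \subseteq \diamondsuit$ directly, via the classical flow-decomposition argument: greedily extract a directed cycle on which $\theta > 0$ and subtract a multiple of its indicator to shrink the support. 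Both proofs are sound. The paper's is shorter \emph{given} the machinery of Lemmas \ref{cycle law}--\ref{node law}, which it needs anyway for the subsequent development (Lemma \ref{custom decomposition}, Proposition \ref{projection properties}); yours is more self-contained and elementary, proving a stronger intermediate fact ($\bigstar^\perp = \diamondsuit$ exactly, so circulations are spanned by cycles) and, as you correctly observe, not actually requiring connectedness. One should note that the paper's argument \emph{does} use connectedness essentially, since Lemma \ref{cycle law} constructs $f$ by integrating $\theta$ along paths from a fixed base point. Your cycle-extraction step is carefully handled: the greedy walk cannot traverse both $e$ and $-e$ (since $\theta$ would then be both positive and negative on $e$), truncating at the first repeated vertex yields a vertex-simple hence edge-simple cycle with no cancellation in $\chi^\gamma$, and subtracting $m\chi^\gamma$ kills at least the minimizing edge while leaving the node law intact.
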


\begin{proof}
A direct computation shows that each star $\star_x$ is orthogonal to each cycle $\chi^\gamma$, so $\bigstar$ and $\diamondsuit$ are orthogonal. If $\theta \in (\bigstar \oplus \diamondsuit)^\perp$, then $\theta$ satisfies the cycle law and the node law at each vertex. Lemma \ref{cycle law} implies that $\theta = c\,df$ for some $f: V \to \R$, and Lemma \ref{node law} implies that $f$ is discrete harmonic on all of $V$. By Proposition \ref{max principle}, $f$ is a constant function, meaning that $\theta \equiv 0$.	
\end{proof}

Given a proper subset $U \subset V$, let $I_U$ be the space of current flows on $U$, that is, discrete gradients of functions $h: V \to \R$ that are discrete harmonic on $U$.

\begin{lemma} \label{custom decomposition}
On a finite connected network, for any proper subset $U \subset V$, $\ell^2_-(\vec{E}) = \bigstar_U \oplus I_U \oplus \diamondsuit$.	
\end{lemma}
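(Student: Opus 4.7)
The plan is to build on Lemma \ref{star-cycle decomposition}, which already gives $\ell^2_-(\vec{E}) = \bigstar \oplus \diamondsuit$, and then refine the star summand into a sum $\bigstar_U + I_U$. Given $\theta \in \ell^2_-(\vec{E})$, first decompose $\theta = \sigma + \gamma$ with $\sigma \in \bigstar$ and $\gamma \in \diamondsuit$. Since $\bigstar \perp \diamondsuit$, the star component $\sigma$ satisfies the cycle law, so by Lemma \ref{cycle law} there exists $f: V \to \R$ with $c\,df = \sigma$. Now let $h: V \to \R$ be the discrete harmonic extension of $f|_{V \setminus U}$ supplied by Proposition \ref{discrete harmonic}, so that $c\,dh \in I_U$. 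The residual $\sigma - c\,dh = c\,d(f-h)$ is the discrete gradient of a function that vanishes on $V \setminus U$ by construction, so Lemma \ref{star derivative} rewrites it as $-\sum_{x \in U}(f-h)(x)\star_x \in \bigstar_U$. This expresses $\theta = c\,d(f-h) + c\,dh + \gamma$ as a sum of elements from the three claimed subspaces.

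It remains to verify that the sum is direct, i.e., that $s + i + d = 0$ with $s \in \bigstar_U$, $i \in I_U$, $d \in \diamondsuit$ forces $s = i = d = 0$. Because $I_U$ consists of discrete gradients, its elements obey the cycle law and lie in $\diamondsuit^\perp$; likewise $\bigstar_U \subseteq \bigstar \subseteq \diamondsuit^\perp$. Hence $s + i$ is orthogonal to $d$, and $d = -(s+i)$ forces $d = 0$. Next, $s = -i$ lies in $\bigstar_U \cap I_U$: writing $s = \sum_{x \in U} a_x \star_x$, Lemma \ref{star derivative} produces $s = c\,d\tilde f$ where $\tilde f(x) = -a_x$ on $U$ and $\tilde f \equiv 0$ on $V \setminus U$, while $i = c\,dh$ with $h$ discrete harmonic on $U$. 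From $c\,d\tilde f = -c\,dh$ and Lemma \ref{cycle law} we conclude that $\tilde f + h$ is a constant function, so $h$ is constant on $V \setminus U$. The uniqueness part of Proposition \ref{discrete harmonic} then forces $h$ to be globally constant, so $i = c\,dh = 0$ and therefore $s = 0$ as well.

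The main substantive input is the existence and uniqueness of discrete harmonic extensions (Proposition \ref{discrete harmonic}); essentially, this result is what allows the star space $\bigstar$ to be split into an ``interior'' piece $\bigstar_U$ and a ``harmonic'' piece $I_U$ compatible with any choice of boundary $V \setminus U$. The one subtle point to watch is the use of the hypothesis that $U \subset V$ is a proper subset: nonemptiness of $V \setminus U$ is exactly what lets one deduce, in the uniqueness step, that a function discrete harmonic on $U$ and constant on $V \setminus U$ must be globally constant. Everything else is bookkeeping built from Lemmas \ref{star-cycle decomposition}, \ref{cycle law}, \ref{node law}, and \ref{star derivative}.
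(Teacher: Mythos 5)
Your argument is correct as a proof that $\ell^2_-(\vec{E})$ is the \emph{algebraic} direct sum $\bigstar_U + I_U + \diamondsuit$, and it takes a genuinely different route from the paper. The paper's proof is a one-liner exploiting the inner-product structure throughout: it observes $\bigstar_U \perp \diamondsuit$ and then shows $I_U = (\bigstar_U \oplus \diamondsuit)^\perp$ directly, with $I_U \subseteq \diamondsuit^\perp$ by the cycle law, $I_U \subseteq \bigstar_U^\perp$ by the node law (Lemma~\ref{node law}), and the reverse inclusion exactly as in Lemma~\ref{star-cycle decomposition}. Your proof is instead constructive: you explicitly exhibit the decomposition by splitting off the $\diamondsuit$-component via Lemma~\ref{star-cycle decomposition}, integrating the $\bigstar$-part to $c\,df$, subtracting the harmonic extension $c\,dh$ to land the remainder in $\bigstar_U$, and then separately proving uniqueness of the representation. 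Both are valid; yours surfaces exactly how the discrete harmonic extension (Proposition~\ref{discrete harmonic}) mediates between $\bigstar_U$ and $I_U$, whereas the paper's is shorter and gives the orthogonality for free.

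That last point exposes a small gap in what you wrote. In this paper $\oplus$ denotes an \emph{orthogonal} direct sum --- that is how it is established in Lemma~\ref{star-cycle decomposition}, and the later applications genuinely rely on orthogonality: the proof of Proposition~\ref{projection properties} deduces $\theta - \Pi_U\theta \in \bigstar_U$ from $\theta \in \diamondsuit^\perp$, and the proof of Proposition~\ref{sandwich} uses $\Pi_U\varphi - \theta \in \bigstar_U^\perp$ together with $\varphi - \Pi_U\varphi \in \bigstar_U$ to get a Pythagorean identity. Your uniqueness argument does invoke orthogonality of $\bigstar_U + I_U$ with $\diamondsuit$, but never establishes or uses $\bigstar_U \perp I_U$; the step ``$s = -i$ lies in $\bigstar_U \cap I_U$'' is resolved purely algebraically via the uniqueness of harmonic extensions. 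So as written, you have only the algebraic direct sum. The missing orthogonality is immediate --- if $\theta = c\,dh \in I_U$ then Lemma~\ref{node law} gives $(\theta, \star_x)_r = 0$ for every $x \in U$, hence $I_U \perp \bigstar_U$ --- but you should state it, since it is what Propositions~\ref{projection properties} and~\ref{sandwich} actually use.
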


\begin{proof}
Since $\bigstar_U$ and $\diamondsuit$ are orthogonal, we show that $I_U = (\bigstar_U \oplus \diamondsuit)^\perp$. Each $\theta \in I_U$ is in $\diamondsuit^\perp$ by Lemma \ref{cycle law} and in $\bigstar_U^\perp$ by Lemma \ref{node law}. The reverse inclusion follows from the same reasoning used to prove Lemma \ref{star-cycle decomposition}.
\end{proof}

Let $\Pi_U: \ell^2_-(\vec{E}) \to \ell^2_-(\vec{E})$ be the orthogonal projection operator onto $I_U$. Much of the theory of electric networks can be understood in terms of properties of $\Pi_U$. The next proposition says that when $\Pi_U$ is applied to a discrete gradient, it preserves boundary values.

\begin{prop} \label{projection properties}
Consider a finite connected network with a proper subset $U \subset V$. Let $f: V \to \R$ and $\theta = c\,df$. Then $\Pi_U \theta = c\,dh$, where $h: V \to \R$ is the unique function that equals $f$ on $V \setminus U$ and is discrete harmonic on $U$. By consequence, $\EE(h) \leq \EE(f)$.
\end{prop}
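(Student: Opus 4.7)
The plan is to use the decomposition in Lemma \ref{custom decomposition} to identify $\Pi_U \theta$ explicitly. Let $h: V \to \R$ be the unique function guaranteed by Proposition \ref{discrete harmonic} that equals $f$ on $V \setminus U$ and is discrete harmonic on $U$. Write $\eta = c\,dh$; the goal is to show $\eta = \Pi_U \theta$.

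First I would verify $\eta \in I_U$, which is immediate from the definition of $I_U$ since $h$ is discrete harmonic on $U$. Next I would analyze the discrepancy $\theta - \eta = c\,d(f-h)$. Because $f - h \equiv 0$ on $V \setminus U$, Lemma \ref{star derivative} yields
\[
\theta - \eta \;=\; -\sum_{x \in V} (f-h)(x)\,\star_x \;=\; -\sum_{x \in U} (f-h)(x)\,\star_x \;\in\; \bigstar_U\,.
\]
Since $\bigstar_U \perp I_U$ by the orthogonality implicit in Lemma \ref{custom decomposition}, the decomposition $\theta = \eta + (\theta - \eta)$ realizes $\theta$ as an element of $I_U$ plus an element of $I_U^\perp$. (Note that $\theta$ may still have a $\diamondsuit$-component, but that component is buried inside $\theta - \eta$; however, since $\theta - \eta$ is itself a discrete gradient, it satisfies the cycle law by Lemma \ref{cycle law}, so in fact $\theta - \eta \in \bigstar_U$ with no cycle piece, consistent with the computation above.) By uniqueness of orthogonal projection, $\Pi_U \theta = \eta = c\,dh$, which is the first claim.

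For the energy bound, the decomposition $\theta = \eta + (\theta - \eta)$ is orthogonal in the inner product $(\cdot,\cdot)_r$, so the Pythagorean identity gives
\[
\EE(f) \;=\; \|\theta\|_r^2 \;=\; \|\eta\|_r^2 + \|\theta - \eta\|_r^2 \;\geq\; \|\eta\|_r^2 \;=\; \EE(h)\,.
\]

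I do not anticipate a real obstacle here: the abstract framework built in Section \ref{Electric networks} was designed precisely so that Dirichlet's principle becomes a one-line consequence of orthogonal projection. The only subtle point is to recognize that the boundary condition $f = h$ on $V \setminus U$ is exactly what forces the gradient of the difference to live in $\bigstar_U$ (as opposed to all of $\bigstar$), and that this in turn is exactly the condition needed to conclude $\theta - \eta \perp I_U$.
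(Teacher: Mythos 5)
Your proof is correct and uses the same key ingredients (Lemma \ref{custom decomposition}, Lemma \ref{star derivative}, and the $\bigstar_U \oplus I_U \oplus \diamondsuit$ decomposition), but the direction of construction is reversed relative to the paper. The paper starts from $\Pi_U\theta$: it deduces $\theta - \Pi_U\theta \in \bigstar_U$, writes this residual as $\sum_{x \in U} a(x)\star_x$, sets $h = f + a$, and then verifies that this $h$ equals $f$ off $U$ and is discrete harmonic on $U$, invoking Proposition \ref{discrete harmonic} only at the end for uniqueness. You instead begin with the harmonic extension $h$ already furnished by Proposition \ref{discrete harmonic}, compute $\theta - c\,dh = c\,d(f-h) \in \bigstar_U \subseteq I_U^\perp$ directly via Lemma \ref{star derivative} (using $f - h \equiv 0$ on $V \setminus U$), and conclude by uniqueness of the orthogonal projection. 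Your direction is arguably cleaner since it requires no explicit construction of $h$ and hands you the decomposition immediately; both buy the energy inequality from the same Pythagorean identity. One small blemish: the parenthetical worrying that $\theta$ ``may still have a $\diamondsuit$-component'' is a red herring --- $\theta = c\,df$ is a discrete gradient, so $\theta \in \diamondsuit^\perp$ by Lemma \ref{cycle law}, and in any case your Lemma \ref{star derivative} computation already lands $\theta - \eta$ in $\bigstar_U$ outright, so the aside adds confusion rather than rigor.
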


\begin{proof}
We have $\theta \in \diamondsuit^\perp$ by Lemma \ref{cycle law}, and then $\theta - \Pi_U \theta \in \bigstar_U$ by Lemma \ref{custom decomposition}. Write
\[
\theta - \Pi_U \theta = \sum_{x \in U} a(x) \star_x
\]
for some coefficients $a(x) \in \R$. Extend $a$ to be defined on all of $V$ by setting $a(x) = 0$ for $x \in V \setminus U$. By Lemma \ref{star derivative},
\[
c\,da = -\sum_{x \in V} a(x) \star_x = \Pi_U \theta - \theta\, .
\]
If we let $h = f + a$, then $h = f$ on $V \setminus U$ and
\[
c\,dh = \theta + (\Pi_U \theta - \theta) = \Pi_U \theta\, .
\]
Since $\Pi_U \theta \in I_U$, we know that $\Pi_U \theta = c\,dh'$ for some $h': V \to \R$ that is discrete harmonic on $U$. Lemma \ref{cycle law} implies that $h - h'$ is a constant function, so $h$ is also discrete harmonic on $U$. The uniqueness statement is Proposition \ref{discrete harmonic}, and we have $\EE(h) \leq \EE(f)$ because orthogonal projection decreases the $\ell^2_-(\vec{E})$ norm.
\end{proof}

We can now state and prove the main result of this section, which follows easily from Lemma \ref{custom decomposition} and Proposition \ref{projection properties}.

\begin{prop} \label{sandwich}
Consider a finite connected network with a proper subset $U \subset V$. Let $h: V \to \R$ be discrete harmonic on $U$. Suppose we are given both a flow $\theta$ on $U$ and a function $f: V \to \R$ such that $f = h$ on $V \setminus U$. Then $\EE(c\,df - c\,dh) + \EE(\theta - c\,dh) = \EE(c\,df - \theta)$.	
\end{prop}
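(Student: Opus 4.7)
The plan is to recognize the identity as a Pythagorean decomposition in the inner product space $(\ell^2_-(\vec E), (\cdot,\cdot)_r)$. Write
\[
c\,df - \theta \;=\; (c\,df - c\,dh) \;+\; (c\,dh - \theta),
\]
and aim to show that the two summands are orthogonal. Once that is done, $\EE(c\,df-\theta) = \EE(c\,df-c\,dh) + \EE(c\,dh-\theta)$, and since $\EE(c\,dh-\theta) = \EE(\theta - c\,dh)$, the claim follows immediately.

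For the orthogonality, I would use the decomposition $\ell^2_-(\vec E) = \bigstar_U \oplus I_U \oplus \diamondsuit$ from Lemma \ref{custom decomposition}. First, the function $f - h$ vanishes on $V \setminus U$ by hypothesis, so Lemma \ref{star derivative} gives
\[
c\,df - c\,dh \;=\; c\,d(f-h) \;=\; -\sum_{x \in V} (f-h)(x)\star_x \;=\; -\sum_{x \in U} (f-h)(x)\star_x \;\in\; \bigstar_U.
\]
Second, $c\,dh \in I_U$ by definition of $I_U$, while $\theta$ is assumed to be a flow on $U$, i.e.\ $\theta \in \bigstar_U^\perp$; since $I_U \subseteq (\bigstar_U \oplus \diamondsuit)^\perp \subseteq \bigstar_U^\perp$ by Lemma \ref{custom decomposition}, we conclude $c\,dh - \theta \in \bigstar_U^\perp$.

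Because $\bigstar_U$ and $\bigstar_U^\perp$ are orthogonal in $(\cdot,\cdot)_r$, so are $c\,df - c\,dh$ and $c\,dh - \theta$, and the Pythagorean identity yields the result. There is no real obstacle here; the content of the proposition is essentially bookkeeping of the orthogonal decomposition already set up in Lemmas \ref{star derivative}, \ref{node law}, and \ref{custom decomposition}, combined with the boundary-matching hypothesis $f = h$ on $V \setminus U$, which is precisely what forces the gradient difference into $\bigstar_U$.
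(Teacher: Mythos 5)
Your proof is correct and follows essentially the same approach as the paper: decompose $c\,df - \theta$ into a component in $\bigstar_U$ and a component in $\bigstar_U^\perp$, then apply the Pythagorean identity. The only cosmetic difference is that you show $c\,df - c\,dh \in \bigstar_U$ directly via Lemma \ref{star derivative} applied to $f - h$, whereas the paper routes through Proposition \ref{projection properties} to identify $c\,dh = \Pi_U(c\,df)$; both amount to the same observation.
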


\begin{proof}
Let $\varphi = c\,df$. By Proposition \ref{projection properties}, $\Pi_U \varphi = c\,dh$. We write
\[
\varphi - \theta = (\varphi - \Pi_U \varphi) + (\Pi_U \varphi - \theta)\, .
\]
Lemmas \ref{cycle law} and \ref{custom decomposition} imply that $\varphi - \Pi_U \varphi \in \bigstar_U$. As well, since both $\Pi_U \varphi$ and $\theta$ are flows on $U$, $\Pi_U \varphi - \theta \in \bigstar_U^\perp$. Therefore,
\[
\begin{split}
\EE(c\,df - \theta) &= \|\varphi - \theta\|_r^2 = \|\varphi - \Pi_U \varphi\|_r^2 + \|\Pi_U \varphi - \theta\|_r^2 \\
&= \|c\,df - c\,dh\|_r^2 + \|c\,dh - \theta\|_r^2 = \EE(c\,df - c\,dh) + \EE(\theta - c\,dh)\, . \qedhere
\end{split}
\]
\end{proof}

In the rest of this section, we prove the discrete principles of Dirichlet and Thomson. We begin with the following useful computation.

\begin{lemma} \label{flow inner product}
Consider a finite network with a proper subset $U \subset V$. Let $\theta$ be a flow on $U$ and let $f: V \to \R$. Then
\[
(\theta, c\,df)_r = \sum_{x \in V \setminus U} f(x) \sum_{e^+ = x} \theta(e)\, .
\]
\end{lemma}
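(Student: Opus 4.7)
The plan is to unfold the definition of the inner product and then use antisymmetry of $\theta$ together with the node law. No step looks subtle; this is a routine bookkeeping computation, and the main thing to watch is correctly handling the pairing of each edge with its reverse.

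First I would write
\[
(\theta, c\,df)_r = \frac{1}{2} \sum_{e \in \vec{E}} \theta(e)\, c(e)[f(e^+) - f(e^-)]\, r(e) = \frac{1}{2} \sum_{e \in \vec{E}} \theta(e)[f(e^+) - f(e^-)],
\]
since $c(e) r(e) = 1$. Next I would split the right-hand side into two sums, and in the $f(e^-)$ sum change variables $e \mapsto -e$. Using $(-e)^- = e^+$ and $\theta(-e) = -\theta(e)$, this substitution turns $\frac{1}{2}\sum_e \theta(e) f(e^-)$ into $-\frac{1}{2} \sum_e \theta(e) f(e^+)$. Combining the two halves gives the clean expression
\[
(\theta, c\,df)_r = \sum_{e \in \vec{E}} \theta(e) f(e^+).
\]

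Then I would regroup the sum over $\vec{E}$ according to the head vertex:
\[
\sum_{e \in \vec{E}} \theta(e) f(e^+) = \sum_{x \in V} f(x) \sum_{e \,:\, e^+ = x} \theta(e).
\]
For $x \in U$, antisymmetry gives $\sum_{e : e^+ = x} \theta(e) = -\sum_{e : e^- = x} \theta(e)$, and the right-hand side vanishes because $\theta$ is a flow on $U$ (i.e.\ satisfies the node law at $x$, or equivalently $(\theta, \star_x)_r = 0$). Hence only the terms with $x \in V \setminus U$ survive, which yields the claimed identity. The only potential pitfall is the factor $\frac{1}{2}$ from the definition of $(\cdot,\cdot)_r$ and the sign bookkeeping in the change of variables, which the argument above handles cleanly.
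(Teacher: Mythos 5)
Your proof is correct and essentially identical to the paper's: both unfold the definition of $(\cdot,\cdot)_r$, use antisymmetry of $\theta$ to collapse the two halves of $f(e^+) - f(e^-)$ into $\sum_e \theta(e) f(e^+)$, regroup by head vertex, and kill the $x \in U$ terms via the node law. The only cosmetic difference is that you change variables $e \mapsto -e$ before regrouping by vertex, whereas the paper regroups first and then applies antisymmetry vertex by vertex; the content is the same.
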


\begin{proof}
We compute
\[
\begin{split}
(\theta, c\,df)_r &= \frac{1}{2} \sum_{e \in \vec{E}} \theta(e)[f(e^+) - f(e^-)] = \frac{1}{2} \sum_{x \in V} f(x) \left[ \sum_{e^+ = x} \theta(e) - \sum_{e^- = x} \theta(e) \right] \\
&= \sum_{x \in V} f(x) \sum_{e^+ = x} \theta(e)\, .
\end{split}
\]
Since $\theta$ is a flow on $U$, $\sum_{e^+ = x} \theta(e) = 0$ for all $x \in U$. This completes the proof.
\end{proof}

Suppose now that $V \setminus U = A \sqcup B$ where $A,B$ are nonempty and disjoint. We call $\theta \in \ell^2_-(\vec{E})$ a \textbf{flow between $A$ and $B$} if it is a flow on $U$. Its \textbf{strength} is
\[
\strength(\theta) = \sum_{e^- \in A} \theta(e) = \sum_{e^+ \in B} \theta(e)\, .
\]
(Applying the node law at each $x \in U$ shows that the two sums are equal.) We say that $\theta$ is a \textbf{flow from $A$ to $B$} if the strength is positive,\footnote{Some other treatments require that $\sum_{e^- = a} \theta(e)$ and $\sum_{e^+ = b} \theta(e)$ be nonnegative for all $a \in A$ and $b \in B$. We do not.} and a \textbf{unit flow} if the strength is $1$.

Given $f: V \to \R$, define
\[
\gap_{A,B}(f) = \min_{b \in B} f(b) - \max_{a \in A} f(a)\, .
\]
The following inequality contains both Dirichlet's and Thomson's principles as special cases.

\begin{prop} \label{Dirichlet Thomson}
Consider a finite network with nonempty disjoint subsets $A,B$ of $V$. For any flow $\theta$ between $A$ and $B$ and any $f: V \to \R$ with $\gap_{A,B}(f) \geq 0$,
\begin{equation} \label{eq:strength-gap}
\strength(\theta) \gap_{A,B}(f) \leq \EE(\theta)^{1/2} \EE(f)^{1/2}\, .
\end{equation}
\end{prop}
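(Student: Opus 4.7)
My plan is to apply the Cauchy--Schwarz inequality in the inner-product space $(\ell^2_-(\vec{E}), (\cdot,\cdot)_r)$ to $\theta$ and to the discrete gradient of a carefully chosen modification $\tilde f$ of $f$ for which the inner product $(\theta, c\,d\tilde f)_r$ evaluates cleanly to $\strength(\theta)\gap_{A,B}(f)$. The point is that the right-hand side of \eqref{eq:strength-gap} only cares about the energy of $f$, so we have plenty of room to replace $f$ by something more convenient as long as energy does not increase.

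First, set $\alpha = \max_{a \in A} f(a)$ and $\beta = \min_{b \in B} f(b)$, so that $\gap_{A,B}(f) = \beta - \alpha \geq 0$ by hypothesis. Define the truncation $\tilde f: V \to \R$ by $\tilde f(x) = \max(\alpha, \min(\beta, f(x)))$. Then $\tilde f \equiv \alpha$ on $A$ and $\tilde f \equiv \beta$ on $B$. Moreover, since the map $t \mapsto \max(\alpha, \min(\beta, t))$ is $1$-Lipschitz on $\R$, for every directed edge $e \in \vec E$ we have $|\tilde f(e^+) - \tilde f(e^-)| \leq |f(e^+) - f(e^-)|$; squaring and summing against the conductances gives $\EE(\tilde f) \leq \EE(f)$.

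Next, apply Lemma \ref{flow inner product} with $U = V \setminus (A \cup B)$ (so that $\theta$ is a flow on $U$ by definition) to write
\[
(\theta, c\,d\tilde f)_r = \alpha \sum_{x \in A}\sum_{e^+ = x} \theta(e) + \beta \sum_{x \in B}\sum_{e^+ = x} \theta(e).
\]
The substitution $e \mapsto -e$ combined with antisymmetry of $\theta$ turns the first inner double sum into $-\sum_{e^- \in A} \theta(e) = -\strength(\theta)$, while the second double sum is directly $\sum_{e^+ \in B} \theta(e) = \strength(\theta)$ by the definition of strength. Hence
\[
(\theta, c\,d\tilde f)_r = (\beta - \alpha)\strength(\theta) = \gap_{A,B}(f)\,\strength(\theta).
\]

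Finally, Cauchy--Schwarz combined with the energy bound above yields
\[
|\gap_{A,B}(f)\,\strength(\theta)| = |(\theta, c\,d\tilde f)_r| \leq \|\theta\|_r \, \|c\,d\tilde f\|_r = \EE(\theta)^{1/2}\EE(\tilde f)^{1/2} \leq \EE(\theta)^{1/2}\EE(f)^{1/2},
\]
and because $\gap_{A,B}(f) \geq 0$ the signed quantity $\strength(\theta)\gap_{A,B}(f)$ is dominated by its absolute value, giving the claim. I do not foresee any substantive obstacle: the single idea is to replace $f$ by its $[\alpha,\beta]$-truncation, which simultaneously forces the boundary contribution in Lemma \ref{flow inner product} to collapse to a multiple of $\strength(\theta)$ and does not increase the Dirichlet energy.
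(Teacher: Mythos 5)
Your proof is correct and takes essentially the same route as the paper: you truncate $f$ to the interval $[\alpha,\beta]$ exactly as the paper does (the paper writes it as $g(x) = (f(x)\vee\alpha)\wedge\beta$), observe the energy only decreases, evaluate $(\theta, c\,dg)_r$ via Lemma \ref{flow inner product}, and finish with Cauchy--Schwarz. The only cosmetic difference is that you spell out the antisymmetry step showing $\sum_{x\in A}\sum_{e^+=x}\theta(e)=-\strength(\theta)$, which the paper states without elaboration.
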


\begin{proof}
Let $\alpha = \max_{a \in A} f(a)$ and $\beta = \min_{b \in B} f(b)$, so that $\gap_{A,B}(f) = \beta - \alpha$. Define $g: V \to \R$ by $g(x) = (f(x) \vee \alpha) \wedge \beta$. Then $g \equiv \alpha$ on $A$ and $g \equiv \beta$ on $B$. We also have $|c\,dg(e)| \leq |c\,df(e)|$ for all $e \in \vec{E}$, so $\EE(g) \leq \EE(f)$.

Lemma \ref{flow inner product} implies that
\[
\begin{split}
(\theta, c\,dg)_r &= \sum_{x \in A} g(x) \sum_{e^+ = x} \theta(e) + \sum_{x \in B} g(x) \sum_{e^+ = x} \theta(e) \\
&= \sum_{x \in A} \alpha \sum_{e^+ = x} \theta(e) + \sum_{x \in B} \beta \sum_{e^+ = x} \theta(e) \\
&= \alpha[-\strength(\theta)] + \beta \strength(\theta) = \strength(\theta) (\beta - \alpha)\, .
\end{split}
\]
Thus, by Cauchy-Schwarz,
\[
\strength(\theta) \gap_{A,B}(f) = (\theta, c\,dg)_r \leq \EE(\theta)^{1/2} \EE(g)^{1/2} \leq \EE(\theta)^{1/2} \EE(f)^{1/2}\, . \qedhere
\]
\end{proof}

In the rest of this paper, we will apply Proposition \ref{Dirichlet Thomson} directly instead of using the usual statements of Dirichlet's and Thomson's principles. Here, we demonstrate for the interested reader how to derive those statements from Proposition \ref{Dirichlet Thomson}. Assume that the network is connected. The inequality \eqref{eq:strength-gap} can be rearranged to
\begin{equation} \label{eq:separation}
\frac{\strength(\theta)}{\EE(\theta)^{1/2}} \leq \frac{\EE(f)^{1/2}}{\gap_{A,B}(f)}
\end{equation}
for any $\theta, f$ with both denominators positive. Let $h: V \to \R$ be identically $0$ on $A$, identically $1$ on $B$, and discrete harmonic on $U$. The random walk characterization of $h$ (see Proposition \ref{discrete harmonic}) implies that $0 \leq h \leq 1$ on $V$ and $\strength(c\,dh) > 0$. Tracing through the proof of Proposition \ref{Dirichlet Thomson} with $f = h$ and $\theta = c\,dh$ shows that equality holds at every step. Hence, squaring \eqref{eq:separation},
\[
\sup_\theta \frac{\strength(\theta)^2}{\EE(\theta)} = \frac{\strength(c\,dh)^2}{\EE(c\,dh)} = \frac{\EE(h)}{\gap_{A,B}(h)^2} = \inf_f \frac{\EE(f)}{\gap_{A,B}(f)^2} \, .
\]
This quantity (which equals $\EE(h)$ since $\gap_{A,B}(h) = 1$) is the \textbf{effective conductance} between $A$ and $B$, and its reciprocal is the \textbf{effective resistance}. The right-hand equality is Dirichlet's principle. The left-hand equality is equivalent to
\[
\inf_\theta \frac{\EE(\theta)}{\strength(\theta)^2} = \EE(\varphi)
\]
where $\varphi = c\,dh / \strength(c\,dh)$ is the \textbf{unit current flow} from $A$ to $B$. This is Thomson's principle.

\section{Energy convergence}
\label{Energy convergence}

Let $G = (V^\bullet \sqcup V^\circ, E)$ be a finite orthodiagonal map with primal and dual networks $(G^\bullet, c)$, $(G^\circ, c)$. We will use the notations $\EE^\bullet$ and $\EE^\circ$ for the energy functionals on these networks. Thus,
\[
\EE^\bullet(f) = \frac{1}{2} \sum_{e \in \vec{E}^\bullet} c(e)[f(e^+) - f(e^-)]^2
\]
for every real-valued function $f$ whose domain contains $V^\bullet$, and $\EE^\circ(f)$ is defined similarly.

Recall that $\partial G$ is the boundary of the outer face of $G$ and that $\widehat{G}$ is the closed subset of $\R^2$ enclosed by $\partial G$. Given $f: \widehat{G} \to \R$, both $\EE^\bullet(f)$ and $\EE^\circ(f)$ are defined. The first result in this section is that when $f$ is sufficiently smooth and the edges of $G$ are short, the average of these two discrete energies approximates the Dirichlet energy $\int_{\widehat{G}} |\nabla f|^2$. For technical reasons, we require $f$ to be smooth on a slightly larger set than $\widehat{G}$. Let
\[
\widetilde{G} = \bigcup_{Q \subset G} \conv(\overline{Q})\, ,
\]
the union over all inner faces $Q$ of the convex hull of the closure of $Q$. Then $\widehat{G} \subseteq \widetilde{G}$, with equality when all inner faces are convex.

\begin{prop} \label{sum integral convergence}
Let $G = (V^\bullet \sqcup V^\circ, E)$ be a finite orthodiagonal map with maximal edge length at most $\e$. Let $U$ be an open subset of $\R^2$ that contains $\widetilde{G}$, and let $f: U \to \R$ be a $C^2$ function. Set
\[
L = \|\nabla f\|_{\infty,\widehat{G}}\, , \qquad M = \|Hf\|_{\infty,\widetilde{G}}\, .
\]
Then,
\[
\left| \frac{\EE^\bullet(f) + \EE^\circ(f)}{2} - \int_{\widehat{G}} |\nabla f|^2 \right| \leq \area(\widehat{G}) (10LM\e + 8M^2 \e^2)\, .
\]
\end{prop}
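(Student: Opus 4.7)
The plan is to decompose both sides as sums over the inner faces of $G$ and match them face by face. By definition,
\[
\frac{\EE^\bullet(f) + \EE^\circ(f)}{2} = \frac{1}{2}\sum_{Q} \left( \frac{|w_1 w_2|}{|v_1 v_2|}[f(v_2) - f(v_1)]^2 + \frac{|v_1 v_2|}{|w_1 w_2|}[f(w_2) - f(w_1)]^2 \right),
\]
where the sum runs over all inner faces $Q = [v_1, w_1, v_2, w_2]$ of $G$, while on the other side $\int_{\widehat{G}}|\nabla f|^2 = \sum_Q \int_Q |\nabla f|^2$.

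For an individual face $Q$, set $a = |v_1 v_2|$, $b = |w_1 w_2|$, and let $\vv, \ww$ be unit vectors along the diagonals, so that Lemma \ref{orientation} makes $\{\vv, \ww\}$ an orthonormal basis of $\R^2$. The triangle inequality on side lengths gives $a, b \leq 2\e$, and $\diam(\conv(\overline{Q})) \leq 2\e$. Let $m_\bullet, m_\circ$ be the midpoints of the two diagonal segments; both lie in $\conv(\overline{Q}) \subseteq \widetilde{G}$. Applying Taylor's theorem centered at $m_\bullet$ along the segment $v_1 v_2 \subseteq \widetilde{G}$ and invoking the Hessian bound on $\widetilde{G}$ yields
\[
f(v_2) - f(v_1) = a\, \nabla f(m_\bullet) \cdot \vv + R_\bullet, \qquad |R_\bullet| \leq \frac{Ma^2}{4},
\]
and similarly $f(w_2) - f(w_1) = b\,\nabla f(m_\circ) \cdot \ww + R_\circ$ with $|R_\circ| \leq Mb^2/4$. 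To handle the cross terms arising on squaring, I would bound $|\nabla f(m_\bullet)| \leq L + M\e$ by writing $\nabla f(m_\bullet) = \nabla f(v_1) + \int_0^1 Hf(v_1 + t(m_\bullet - v_1))(m_\bullet - v_1)\,dt$, using the gradient bound at $v_1 \in \widehat{G}$ and the Hessian bound along the segment; the analogous bound holds at $m_\circ$.

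Squaring each identity, multiplying by the respective conductances, and adding gives
\[
\frac{b}{a}[f(v_2)-f(v_1)]^2 + \frac{a}{b}[f(w_2)-f(w_1)]^2 = ab\bigl[(\nabla f(m_\bullet)\cdot\vv)^2 + (\nabla f(m_\circ)\cdot\ww)^2\bigr] + E_Q,
\]
where, using $a,b \leq 2\e$ and the bound on $|\nabla f(m_\bullet)|$, $|\nabla f(m_\circ)|$, one checks $|E_Q|$ is a constant multiple of $(LM\e + M^2\e^2)\,ab$. Since $|m_\bullet - m_\circ| \leq \e$ and the Hessian bound holds on $\widetilde{G}$, I would then replace $(\nabla f(m_\circ)\cdot\ww)^2$ by $(\nabla f(m_\bullet)\cdot\ww)^2$ at the same order of error. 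Orthonormality of $\{\vv, \ww\}$ collapses the main term to $ab\,|\nabla f(m_\bullet)|^2$. The shoelace identity $2\area(Q) = |(v_1-v_2)\times(w_1-w_2)| = ab$, valid for any simple quadrilateral with orthogonal diagonals (convex or not), turns this into $2\area(Q)\,|\nabla f(m_\bullet)|^2$.

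For the integral side, the pointwise comparison $\bigl||\nabla f(x)|^2 - |\nabla f(m_\bullet)|^2\bigr| \leq (2L + M\e)\,M\,|x - m_\bullet|$ on $x \in Q$, combined with $|x-m_\bullet| \leq 2\e$, gives $\int_Q |\nabla f|^2 = \area(Q)\,|\nabla f(m_\bullet)|^2 + O\bigl((LM\e + M^2\e^2)\area(Q)\bigr)$. Subtracting from the expression for the face's contribution to $\frac{1}{2}(\EE^\bullet + \EE^\circ)$ and summing over $Q$ yields the claimed inequality, with the explicit constants $10$ and $8$ emerging from careful bookkeeping. The main subtlety is the non-convex case: the diagonal segments can exit $Q$, which is precisely why $M$ must be taken as the Hessian supremum on $\widetilde{G}$ rather than $\widehat{G}$; the gradient bound $L$ need only hold on $\widehat{G}$ because the quadrilateral vertices themselves lie there. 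A smaller bookkeeping task is verifying that $\area(Q) = \tfrac{1}{2}ab$ remains valid without convexity, which the shoelace identity handles.
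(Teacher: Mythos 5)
Your proposal is correct and takes essentially the same approach as the paper: decompose both sides face by face, Taylor-expand the face increments to linear order with a Hessian-controlled remainder, identify the leading term as $\area(Q)\,|\nabla f|^2$ evaluated at an anchor point, and bound the integral side by the same quantity using the pointwise gradient comparison. The only difference is your choice of anchors: the paper expands everything around the single point $q = e_Q^\bullet \cap e_Q^\circ$, which lies in $Q \subset \widehat{G}$ so that $|\nabla f(q)| \le L$ directly, whereas you use the two diagonal midpoints $m_\bullet, m_\circ$ (which may leave $Q$ in the concave case, hence your $L + M\e$ bound) and then pay a small extra term to replace $\nabla f(m_\circ)$ by $\nabla f(m_\bullet)$. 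Your observation $|m_\bullet - m_\circ| = \tfrac12|(v_1 - w_1) + (v_2 - w_2)| \le \e$ is correct and the bookkeeping does close with constants comparable to the stated $10$ and $8$.
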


The second result bounds the discrete energy of the difference between the solution to the continuous Dirichlet problem on $\widehat{G}$ and the solution to the discrete Dirichlet problem on $G^\bullet$ with the same boundary data.

\begin{prop} \label{energy convergence}
Let $G = (V^\bullet \sqcup V^\circ, E)$ be a finite orthodiagonal map with maximal edge length at most $\e$. Let $U$ be a simply connected domain in $\R^2$ that contains $\widetilde{G}$, and let $h_c: U \to \R$ be continuous harmonic on $U$. Let $h_d: V^\bullet \to \R$ be discrete harmonic on $\Int(V^\bullet)$ and satisfy $h_d = h_c$ on $\partial V^\bullet$. Set
\[
M = \|Hh_c\|_{\infty,\widetilde{G}}\, .
\]
Then,
\[
\EE^\bullet(h_c - h_d) \leq 32 \area(\widehat{G}) M^2 \e^2\, .
\]
\end{prop}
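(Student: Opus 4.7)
The plan is to apply Proposition \ref{sandwich} on the primal network, comparing $h_d$ to the restriction $f := h_c|_{V^\bullet}$ via an auxiliary flow $\theta$ on $\Int(V^\bullet)$ built from the harmonic conjugate of $h_c$. Since $U$ is simply connected, there exists a $C^2$ function $\tilde h_c : U \to \R$ with $F := h_c + i\tilde h_c$ holomorphic on $U$, and a direct calculation from the Cauchy-Riemann equations gives $\|H\tilde h_c\|_{\infty, \widetilde G} = M$. For each inner face $Q = [v_1,w_1,v_2,w_2]$, let $\vec e_Q^\bullet$ denote the primal edge oriented from $v_1$ to $v_2$ and set
\[
\theta(\vec e_Q^\bullet) := \tilde h_c(w_2) - \tilde h_c(w_1),
\]
extended antisymmetrically. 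To verify that $\theta$ is a flow on $\Int(V^\bullet)$, fix $v \in \Int(V^\bullet)$ and list the incident faces in counterclockwise order as $Q_j = [v, w_j, v_j, w_{j+1}]$ (indices mod $k$), as in the proof of Proposition \ref{martingale}; the net outflow of $\theta$ at $v$ is then $\sum_{j=1}^k [\tilde h_c(w_{j+1}) - \tilde h_c(w_j)] = 0$ by telescoping around the cycle.

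Since $f = h_d$ on $\partial V^\bullet$, Proposition \ref{sandwich} gives $\EE^\bullet(c\,df - c\,dh_d) + \EE^\bullet(\theta - c\,dh_d) = \EE^\bullet(c\,df - \theta)$; in particular,
\[
\EE^\bullet(h_c - h_d) \leq \EE^\bullet(c\,df - \theta),
\]
reducing the proposition to a face-by-face estimate of $c\,df - \theta$. Fix $Q$ as above, write $a = |v_1 v_2|$ and $b = |w_1 w_2|$, and let $p, q$ be the midpoints of the segments $v_1 v_2$ and $w_1 w_2$. Midpoint Taylor expansion along the diagonals (both contained in the convex set $\conv(\overline Q) \subseteq \widetilde G$) yields
\[
h_c(v_2) - h_c(v_1) = a\,\partial_\vv h_c(p) + R_1, \qquad \tilde h_c(w_2) - \tilde h_c(w_1) = b\,\partial_\ww \tilde h_c(q) + R_2,
\]
with $|R_1| \leq Ma^2/4$ and $|R_2| \leq Mb^2/4$. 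By Lemma \ref{orientation}, $\ww$ is the $\pi/2$-counterclockwise rotation of $\vv$, and combining with the Cauchy-Riemann equations for $F$ gives the pointwise identity $\partial_\vv h_c \equiv \partial_\ww \tilde h_c$ on $U$.

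Multiplying the first Taylor identity by $c(e_Q^\bullet) = b/a$ and subtracting the second produces
\[
c\,df(\vec e_Q^\bullet) - \theta(\vec e_Q^\bullet) = b\bigl[\partial_\vv h_c(p) - \partial_\vv h_c(q)\bigr] + \tfrac{b}{a}R_1 - R_2.
\]
Convexity of $\conv(\overline Q)$ gives $|\partial_\vv h_c(p) - \partial_\vv h_c(q)| \leq M|p-q|$, and a short argument shows $|p-q|, a, b \leq 2\e$ (the diameter of a simple quadrilateral of edge length at most $\e$ is at most the sum of two adjacent edges); these combine to give $|c\,df(\vec e_Q^\bullet) - \theta(\vec e_Q^\bullet)| \leq CMb\e$ for an explicit absolute constant $C$. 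Multiplying the square by the resistance $r(e_Q^\bullet) = a/b$ gives $O(M^2 ab \e^2)$ per face, and the shoelace formula together with Lemma \ref{orientation} yields $\area(Q) = ab/2$ for any simple quadrilateral with orthogonal diagonals, so summing over $Q$ delivers the claimed bound. The main obstacle is arranging the two Taylor expansions on orthogonal diagonals so that Cauchy-Riemann cancels the leading terms — this is what forces the Hessian bound to hold on $\widetilde G$ rather than merely on $\widehat G$ — and then tracking the three remainder contributions so that each scales correctly against the per-face factor of $\area(Q)$. Everything else is a formal application of the electric network framework from Section \ref{Electric networks}.
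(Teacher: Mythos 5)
Your proof is correct and follows essentially the same strategy as the paper's: build the flow $\theta$ on $\Int(V^\bullet)$ from the harmonic conjugate $\tilde h_c$, apply Proposition \ref{sandwich} to reduce to bounding $\EE^\bullet(c\,df - \theta)$, and then do a per-face Taylor estimate whose leading terms cancel by Cauchy--Riemann. The one small technical difference is the choice of Taylor center: the paper expands both $h_c(v_2)-h_c(v_1)$ and $\tilde h_c(w_2)-\tilde h_c(w_1)$ around a single common point, namely the intersection point $q$ of $e_Q^\bullet$ and $e_Q^\circ$ (this is Lemma \ref{gradient approx}), while you expand each difference around the midpoint of its own diagonal and pay an additional cross-term $b\,|\partial_\vv h_c(p) - \partial_\vv h_c(q)| \le M b\,|p-q|$. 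Both routes deliver the required per-face bound of $O(M^2 \area(Q)\e^2)$; in fact your constant ($18$ rather than $32$) is slightly sharper, though the improvement is immaterial.
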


The assumption in Proposition \ref{energy convergence} that $h_c$ is harmonic on the larger simply connected domain $U$ could be weakened to the requirement that $h_c$ is harmonic on the interior of $\widehat{G}$ along with some smoothness conditions at the boundary. The stronger assumption will hold when the proposition is applied later in the paper and somewhat simplifies the proof. A similar remark holds for Proposition \ref{sum integral convergence}.

Proposition \ref{sum integral convergence} is analogous to \cite[Lemma 2.3]{S13} and \cite[Lemma 6.1]{W15}, and Proposition \ref{energy convergence} is analogous to \cite[Theorem 3.5]{D99}. The main difference is that all three earlier results impose regularity conditions on $G$, while we only require control over the maximal edge length. It is possible that the bounds above could be strengthened to use a norm other than $L^\infty$ for $L$ and $M$, as in \cite{D99}, but these statements will be sufficient for our purposes.

The rest of this section is devoted to the proofs of these two propositions, which are similar in flavor. Both rely on approximations carried out within each inner face of $G$. While Proposition \ref{sum integral convergence} is effectively just a careful computation, Proposition \ref{energy convergence} relies on the development in Section \ref{Electric networks} of orthogonality in the space $\ell^2_-(\vec{E})$, in particular Proposition \ref{sandwich}.

\begin{lemma} \label{gradient approx}
Let $Q = [v_1,w_1,v_2,w_2]$ be an inner face of an orthodiagonal map $G = (V^\bullet \sqcup V^\circ, E)$. Assume that each edge $v_i w_j$ of $Q$ has length at most $\e$. Let $q \in Q$ be the intersection point of the edges $e_Q^\bullet$ and $e_Q^\circ$. Let $\vv$ and $\ww$ be unit vectors pointing in the directions of $\overrightarrow{v_1 v_2}$ and $\overrightarrow{w_1 w_2}$, respectively. Let $U$ be an open subset of $\R^2$ that contains $\widetilde{Q} = \conv(\overline{Q})$. Given a $C^2$ function $f: U \to \R$, set
\[
M = \|Hf\|_{\infty,\widetilde{Q}}\, .
\]
Then, for each $z \in \overline{Q}$,
\begin{equation} \label{basic-gradient}
|\nabla f(z) - \nabla f(q)| \leq M\e\, .
\end{equation}
In addition,
\begin{align}
\Big| f(v_2) - f(v_1) - \langle \nabla f(q), \vv \rangle |v_1 v_2| \Big| &\leq 2M |v_1 v_2| \e\, , \label{x-gradient} \\
\Big| f(w_2) - f(w_1) - \langle \nabla f(q), \ww \rangle |w_1 w_2| \Big| &\leq 2M |w_1 w_2| \e\, . \label{y-gradient}
\end{align}
\end{lemma}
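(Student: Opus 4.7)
The plan is to reduce the entire lemma to a single geometric statement: every point of $\widetilde{Q} = \conv\{v_1,w_1,v_2,w_2\}$ lies within Euclidean distance $\e$ of $q$. Once this is in hand, all three inequalities will follow from standard calculus on the convex set $\widetilde{Q}$, using only that $\|Hf\|_2 \leq M$ there.

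To prove the geometric statement, I would first observe that by convexity of $|\cdot - q|$, it suffices to verify the bound at each of the four vertices. The argument then splits according to whether $v_1 v_2 \subseteq \overline{Q}$. In the convex case, the edges $e_Q^\bullet$ and $e_Q^\circ$ are the straight orthogonal diagonals $v_1 v_2$ and $w_1 w_2$, meeting at $q$; each of the four triangles $v_i q w_j$ is a right triangle with hypotenuse $v_i w_j$ of length at most $\e$, so each vertex lies within $\e$ of $q$. In the concave case, one may assume without loss of generality that $v_1 v_2 \not\subseteq \overline{Q}$: then $e_Q^\bullet$ is bent through the midpoint $p$ of $w_1 w_2$ while $e_Q^\circ = w_1 w_2$ remains straight, which forces $q = p$. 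The triangle inequality along the boundary path $w_1 v_i w_2$ gives $|w_1 w_2| \leq 2\e$, hence $|w_j - q| \leq \e$; and since the squared distance from $v_i$ to a point on the segment $w_1 w_2$ is a convex function of the parameter, its value at the midpoint $q$ is bounded by $\max(|v_i w_1|^2, |v_i w_2|^2) \leq \e^2$.

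With the geometric fact in hand, \eqref{basic-gradient} is immediate: integrating $Hf$ along the segment $\overline{qz} \subseteq \widetilde{Q}$ yields $|\nabla f(z) - \nabla f(q)| \leq M|z - q| \leq M\e$. The same bound in fact holds for every $z \in \widetilde{Q}$, and this extended version is what I would feed into the proofs of \eqref{x-gradient} and \eqref{y-gradient}. For \eqref{x-gradient}, the segment from $v_1$ to $v_2$ lies in $\widetilde{Q}$ by convexity even when $Q$ itself is concave, so
\[
f(v_2) - f(v_1) - \langle \nabla f(q), v_2 - v_1 \rangle = \int_0^1 \langle \nabla f(v_1 + t(v_2 - v_1)) - \nabla f(q), v_2 - v_1 \rangle \, dt.
\]
Bounding the integrand pointwise gives a total error of at most $M\e |v_1 v_2|$, well within the claimed $2M|v_1 v_2|\e$; since $v_2 - v_1 = |v_1 v_2|\vv$, this is exactly \eqref{x-gradient}. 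The proof of \eqref{y-gradient} is identical, using the segment from $w_1$ to $w_2$.

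The only genuine obstacle is unpacking the definition of $q$ in the concave case and verifying that it coincides with the midpoint $p$ of the non-bent diagonal; everything else is routine multivariable calculus on the convex set $\widetilde{Q}$.
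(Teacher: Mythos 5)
Your proof is correct and follows the same basic plan as the paper's (integrate the Hessian along straight segments inside $\widetilde{Q}$, anchored at $q$), but the execution differs in two ways worth noting. First, the paper merely asserts, without proof, the key geometric fact that $|qz| \leq \e$ for $z \in \overline{Q}$, whereas you actually establish it — and in the stronger form $|qz| \leq \e$ for all $z \in \widetilde{Q}$ — with an explicit case split into the convex case (four right triangles with hypotenuse $\leq \e$) and the concave case ($q$ is the midpoint of the contained diagonal; the other two vertices are handled by convexity of the squared distance along a segment). This is a genuine improvement in rigor: the concave case is not obvious and the paper glosses over it. Second, for \eqref{x-gradient} and \eqref{y-gradient} you use a single first-order integral remainder anchored directly at $q$, whereas the paper Taylor-expands at $v_1$ (picking up a second-order remainder bounded by $\tfrac{M}{2}|v_1v_2|^2 \leq M|v_1v_2|\e$) and then shifts the gradient from $v_1$ to $q$ (another $M|v_1v_2|\e$), totalling the stated $2M|v_1v_2|\e$. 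Your one-step version gives the sharper constant $M|v_1v_2|\e$ and is slightly cleaner, since all segments involved ($qz$, $v_1v_2$, $w_1w_2$) lie in the convex set $\widetilde{Q}$ on which $M$ is defined, avoiding any need to argue that $Q$ itself is star-shaped from $q$ (a claim the paper makes but does not need, since $M$ is already a supremum over $\widetilde{Q}$).
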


\begin{proof}
To verify \eqref{basic-gradient}, we observe that the segment $qz$ has length at most $\e$ and is contained in $\overline{Q}$. Let $\uu$ be a unit vector pointing in the direction of $\overrightarrow{qz}$. We compute
\[
\begin{split}
|\nabla f(z) - \nabla f(q)| &= \left| \int_0^{|qz|} \frac{d}{dt} \nabla f(q + t\uu) \,dt \right| = \left| \int_0^{|qz|} Hf(q + t\uu) \uu \,dt \right| \\
&\leq \int_0^{|qz|} \big| Hf(q + t\uu) \uu \big| \,dt \leq M|qz| \leq M\e\, .
\end{split}
\]

For \eqref{x-gradient}, let $\gamma(t) = f(v_1 + t\vv)$, so that
\[
\gamma'(t) = \langle \nabla f(v_1 + t\vv)\, , \vv \rangle \, , \qquad \gamma''(t) = \vv^T Hf(v_1 + t\vv) \vv\, .
\]
In particular, for $0 \leq t \leq |v_1 v_2|$, $v_1 + t\vv \in \widetilde{Q}$ and so $|\gamma''(t)| \leq M$. Now,
\begin{equation} \label{intermediate-gradient}
\Big| f(v_2) - f(v_1) - \gamma'(0) |v_1 v_2| \Big| = \left| \int_0^{|v_1 v_2|} \int_0^t \gamma''(s) \,ds \,dt \right| \leq \frac{M}{2} |v_1 v_2|^2\, .
\end{equation}
We also have
\[
|\gamma'(0) - \langle \nabla f(q), \vv \rangle| = |\langle \nabla f(v_1) - \nabla f(q), \vv \rangle| \leq |\nabla f(v_1) - \nabla f(q)| \leq M\e\, ,
\]
using \eqref{basic-gradient} in the last inequality. Thus,
\[
\Big| \gamma'(0) |v_1 v_2| - \langle \nabla f(q), \vv \rangle |v_1 v_2| \Big| \leq M |v_1 v_2| \e\, .
\]
Combining this bound with \eqref{intermediate-gradient} and using that $|v_1 v_2| \leq 2\e$, we obtain \eqref{x-gradient}. The proof of \eqref{y-gradient} is exactly the same.
\end{proof}

\begin{proof}[Proof of Proposition \ref{sum integral convergence}]
Let $Q = [v_1,w_1,v_2,w_2]$ be an inner face of $G$. We have $\area(Q) = \frac{1}{2} |v_1 v_2| \cdot |w_1 w_2|$. Define $q \in Q$ and the unit vectors $\vv,\ww$ as in the statement of Lemma \ref{gradient approx}. The contribution from $Q$ to $\frac{1}{2}[\EE^\bullet(f) + \EE^\circ(f)]$ is
\begin{equation} \label{energy-from-Q}
\frac{1}{2} \left( \frac{|w_1 w_2|}{|v_1 v_2|} [f(v_2) - f(v_1)]^2 + \frac{|v_1 v_2|}{|w_1 w_2|} [f(w_2) - f(w_1)]^2 \right)\, .
\end{equation}
We will prove that both \eqref{energy-from-Q} and $\int_Q |\nabla f|^2$ are close to $\area(Q) |\nabla f(q)|^2$, and therefore also to each other. Summing over the inner faces $Q$, it will follow that $\frac{1}{2}[\EE^\bullet(f) + \EE^\circ(f)]$ is approximately equal to $\int_{\widehat{G}} |\nabla f|^2$.

To begin, we show that
\begin{multline} \label{half-energy}
\left| \frac{|w_1 w_2|}{|v_1 v_2|} [f(v_2) - f(v_1)]^2 - |v_1 v_2| \cdot |w_1 w_2| \cdot \langle \nabla f(q), \vv \rangle^2 \right| \\
\leq 4 |v_1 v_2| \cdot |w_1 w_2| \cdot (LM\e + M^2 \e^2)\, .
\end{multline}
Indeed, the left side of \eqref{half-energy} is equal to
\[
\frac{|w_1 w_2|}{|v_1 v_2|} \left| [f(v_2) - f(v_1)]^2 - |v_1 v_2|^2 \cdot \langle \nabla f(q), \vv \rangle^2 \right|\, .
\]
In general, if $|a-b| \leq \delta$, then $|a^2 - b^2| \leq |a-b|(|a| + |b|) \leq \delta(2|b| + \delta)$. Using this along with \eqref{x-gradient}, the quantity above is at most
\[
\frac{|w_1 w_2|}{|v_1 v_2|} \cdot 2M |v_1 v_2| \e \cdot \big( 2|v_1 v_2| \cdot |\langle \nabla f(q), \vv \rangle| + 2M |v_1 v_2| \e \big)\, ,
\]
which is bounded above by the right side of \eqref{half-energy}. Similarly, using \eqref{y-gradient}, we also have
\begin{multline} \label{other-half-energy}
\left| \frac{|v_1 v_2|}{|w_1 w_2|} [f(w_2) - f(w_1)]^2 - |v_1 v_2| \cdot |w_1 w_2| \cdot \langle \nabla f(q), \ww \rangle^2 \right| \\
\leq 4 |v_1 v_2| \cdot |w_1 w_2| \cdot (LM\e + M^2 \e^2)\, .
\end{multline}
Since $\vv$ and $\ww$ are orthogonal, $\langle \nabla f(q), \vv \rangle^2 + \langle \nabla f(q), \ww \rangle^2 = |\nabla f(q)|^2$. Therefore, combining \eqref{half-energy} and \eqref{other-half-energy},
\begin{multline} \label{total-energy}
\left| \frac{|w_1 w_2|}{|v_1 v_2|} [f(v_2) - f(v_1)]^2 + \frac{|v_1 v_2|}{|w_1 w_2|} [f(w_2) - f(w_1)]^2 - |v_1 v_2| \cdot |w_1 w_2| \cdot |\nabla f(q)|^2 \right| \\
\leq 8 |v_1 v_2| \cdot |w_1 w_2| \cdot (LM\e + M^2 \e^2)\, .
\end{multline}

We also compute, using \eqref{basic-gradient} in the last line, that
\begin{multline*}
\left| \int_Q |\nabla f(x)|^2 \,dx - \area(Q) |\nabla f(q)|^2 \right| \\
\begin{aligned}
&= \left| \int_Q \Big( |\nabla f(x)|^2 - |\nabla f(q)|^2 \Big) \,dx \right| \\
&\leq \int_Q \Big| \Big( |\nabla f(x)| + |\nabla f(q)| \Big) \Big( |\nabla f(x)| - |\nabla f(q)| \Big) \Big| \,dx \\
&\leq 2L \int_Q \Big| \nabla f(x) - \nabla f(q) \Big| \,dx \\
&\leq 2LM\e \cdot \area(Q)\, .
\end{aligned}
\end{multline*}
Since $\area(Q) = \frac{1}{2} |v_1 v_2| \cdot |w_1 w_2|$, combining the above with $\frac{1}{2} \cdot \eqref{total-energy}$ yields
\begin{multline*}
\left| \frac{1}{2} \left( \frac{|w_1 w_2|}{|v_1 v_2|} [f(v_2) - f(v_1)]^2 + \frac{|v_1 v_2|}{|w_1 w_2|} [f(w_2) - f(w_1)]^2 \right) - \int_Q |\nabla f(x)|^2 \,dx \right| \\
\leq \area(Q) (10LM\e + 8M^2 \e^2)\, .
\end{multline*}
The proof is finished by summing over all inner faces $Q$ of $G$.
\end{proof}

\begin{proof}[Proof of Proposition \ref{energy convergence}]
We define a bijective map $e \mapsto e^\dagger$ from $\vec{E}^\bullet$ to $\vec{E}^\circ$ as follows. Let $Q = [v_1,w_1,v_2,w_2]$ be an inner face of $G$. If $e \in \vec{E}^\bullet$ is the orientation of $e_Q^\bullet$ with $e^- = v_1$ and $e^+ = v_2$, then we set $e^\dagger$ to be the orientation of $e_Q^\circ$ with $(e^\dagger)^- = w_1$ and $(e^\dagger)^+ = w_2$. We also set $(-e)^\dagger = -(e^\dagger)$.

Let $\widetilde{h}_c$ be the harmonic conjugate of $h_c$ on $U$, which is defined up to addition of an arbitrary constant since $U$ is simply connected. Consider the following two functions in $\ell^2_-(\vec{E}^\bullet)$:
\begin{align*}
\varphi(e) &= c(e)[h_c(e^+) - h_c(e^-)] \\
\theta(e) &= \widetilde{h}_c((e^\dagger)^+) - \widetilde{h}_c((e^\dagger)^-)	
\end{align*}
Evidently, $\varphi$ is the discrete gradient of the restriction of $h_c$ to $V^\bullet$. We now show that $\theta$ is a flow on $\Int(V^\bullet)$. Given $v \in \Int(V^\bullet)$, label the vertices and faces in the immediate neighborhood of $v$ as in Figure \ref{local-nbd}. Thus, the faces of $G$ incident to $v$ are listed in counterclockwise order as $Q_1,\ldots,Q_k$, where each $Q_j = [v,w_j,v_j,w_{j+1}]$ (taking indices mod $k$). If $e_j \in \vec{E}^\bullet$ is the edge contained in $Q_j$ with tail $v$ and head $v_j$, then $e_j^\dagger$ has tail $w_j$ and head $w_{j+1}$. It follows that
\[
\sum_{e \in \vec{E}^\bullet \,:\, e^- = v} \theta(e) = \sum_{j=1}^k \left[ \widetilde{h}_c(w_{j+1}) - \widetilde{h}_c(w_j) \right] = 0\, .
\]

Proposition \ref{sandwich}, taking $f$ as the restriction of $h_c$ to $V^\bullet$ and $h = h_d$, implies that
\[
\EE^\bullet(h_c - h_d) \leq \EE^\bullet(h_c - h_d) + \EE^\bullet(\theta - c\,dh_d) = \EE^\bullet(\varphi - \theta)\, .
\]
Thus, it will suffice to show that
\[
\EE^\bullet(\varphi - \theta) \leq 32 \area(\widehat{G}) M^2 \e^2\, .
\]

The contribution to $\EE^\bullet(\varphi - \theta)$ from an inner face $Q = [v_1,w_1,v_2,w_2]$ is
\begin{multline} \label{eq:Q-contrib}
\frac{|v_1 v_2|}{|w_1 w_2|} \left( \frac{|w_1 w_2|}{|v_1 v_2|} [h_c(v_2) - h_c(v_1)] - [\widetilde{h}_c(w_2) - \widetilde{h}_c(w_1)] \right)^2 \\
= |v_1 v_2| \cdot |w_1 w_2| \cdot \left( \frac{h_c(v_2) - h_c(v_1)}{|v_1 v_2|} - \frac{\widetilde{h}_c(w_2) - \widetilde{h}_c(w_1)}{|w_1 w_2|} \right)^2\, .
\end{multline}
Let $\vv$ and $\ww$ be unit vectors pointing in the directions of $\overrightarrow{v_1 v_2}$ and $\overrightarrow{w_1 w_2}$, respectively. By Lemma \ref{orientation}, $\ww$ is the counterclockwise rotation by $\pi/2$ of $\vv$. Therefore, if $q \in Q$ is the intersection point of the edges $e_Q^\bullet$ and $e_Q^\circ$, the Cauchy-Riemann equations imply that
\[
\langle \nabla h_c(q), \vv \rangle = \langle \nabla \widetilde{h}_c(q), \ww \rangle\, .
\]
We rewrite the quantity in parentheses on the right side of \eqref{eq:Q-contrib} as
\[
\frac{h_c(v_2) - h_c(v_1) - \langle \nabla h_c(q), \vv \rangle |v_1 v_2|}{|v_1 v_2|} - \frac{\widetilde{h}_c(w_2) - \widetilde{h}_c(w_1) - \langle \nabla \widetilde{h}_c(q), \ww \rangle |w_1 w_2|}{|w_1 w_2|}\, .
\]
We will apply \eqref{x-gradient} to $h_c$ and \eqref{y-gradient} to $\widetilde{h}_c$. By the Cauchy-Riemann equations,
\[
\|H\widetilde{h}_c\|_{\infty,\widetilde{G}} = \|Hh_c\|_{\infty,\widetilde{G}} = M\, .
\]
Therefore, \eqref{x-gradient} and \eqref{y-gradient} yield
\begin{align*}
\left| \frac{h_c(v_2) - h_c(v_1) - \langle \nabla h_c(q), \vv \rangle |v_1 v_2|}{|v_1 v_2|} \right| &\leq 2M\e\, , \\
\left| \frac{\widetilde{h}_c(w_2) - \widetilde{h}_c(w_1) - \langle \nabla \widetilde{h}_c(q), \ww \rangle |w_1 w_2|}{|w_1 w_2|} \right| &\leq 2M\e\, .
\end{align*}
It follows that the contribution to $\EE^\bullet(\varphi - \theta)$ from $Q$ is at most
\[
|v_1 v_2| \cdot |w_1 w_2| \cdot (2M\e + 2M\e)^2 = 32 \area(Q) M^2 \e^2\, .
\]
Summing over all inner faces $Q$ of $G$, the proof is complete.
\end{proof}

\section{Resistance estimates}
\label{sec:resistance}

The goal of Sections \ref{sec:resistance} through \ref{main proof} is to prove Theorem \ref{main_thm} using the energy bound Proposition \ref{energy convergence}. The two estimates in this section, Propositions \ref{prop:center-to-outside} and \ref{prop:left-to-right}, form the core of the argument. Proposition \ref{prop:left-to-right} will be used in Section \ref{Equicontinuity section} to prove Proposition \ref{equicontinuity}, and Proposition \ref{prop:center-to-outside} will be used in Section \ref{main proof} to help finish the proof of Theorem \ref{main_thm}. In this section we have chosen to put Proposition \ref{prop:center-to-outside} first because its statement is simpler.

\begin{prop} \label{prop:center-to-outside}
Let $G = (V^\bullet \sqcup V^\circ, E)$ be a finite orthodiagonal map with maximal edge length at most $\e$. Fix $x \in V^\bullet$ and let $B$ be the closed disk of radius $r \geq 3\e$ centered at $x$. Assume that $B \subset \Int(G)$. Then there is a unit flow $\theta$ in $G^\bullet$ from the set $A = V^\bullet \cap B$ to $\partial V^\bullet$ such that
\[
\EE^\bullet(\theta) \leq C \log\left( \frac{\diam(\widehat{G})}{r} \right)
\]
for some universal constant $C < \infty$.
\end{prop}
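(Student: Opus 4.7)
The plan is to construct, by hand, a unit flow $\theta$ in $G^\bullet$ from $A$ to $\partial V^\bullet$ and to verify that its energy satisfies $\EE^\bullet(\theta)\leq C\log(\diam(\widehat G)/r)$. The construction discretizes the continuous unit radial outflow $\theta_c(z)=(z-x)/(2\pi|z-x|^2)$, whose Dirichlet energy on $\{r\leq|z-x|\leq\diam(\widehat G)\}$ equals $\tfrac{1}{2\pi}\log(\diam(\widehat G)/r)$. Since $\theta_c$ is the perpendicular gradient of the multi-valued function $\tilde u(z)=\arg(z-x)/(2\pi)$, the natural discretization mirrors the flow built in the proof of Proposition~\ref{energy convergence}: choose a ray $\ell$ from $x$ avoiding every dual vertex and every dual edge, set $\tilde u(w)=\arg(w-x)/(2\pi)$ on $V^\circ$ with branch cut along $\ell$, and for each directed primal edge $e$ in a face $Q=[v_1,w_1,v_2,w_2]$ with dual edge traversed from $w_1$ to $w_2$, set
\[
\theta(e)=\tilde u(w_2)-\tilde u(w_1),
\]
interpreting the difference as the continuous change of $\arg(\cdot-x)$ along the dual edge, so that branch-cut jumps are automatically discarded.

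The node law at an interior primal vertex $v$ follows from telescoping: $\sum_{e^-=v}\theta(e)$ equals the winding number of the local dual cycle around $x$, which is zero for $v\ne x$ (the cycle lies in a small neighborhood of $v$ not containing $x$) and equals $1$ for $v=x$, which lies in $A$ by hypothesis. Hence $\theta$ is a unit flow from $A$ to $\partial V^\bullet$. The identity $r(e_Q^\bullet)=c(e_Q^\circ)$ yields $\EE^\bullet(\theta)=\EE^\circ(\tilde u)$, reducing the task to bounding the dual-graph Dirichlet energy of $\tilde u$. A face-by-face argument in the spirit of the proof of Proposition~\ref{energy convergence} then bounds this sum, on faces whose closure lies outside a fixed small disk around $x$, by the continuous Dirichlet integral $\int|\nabla\tilde u|^2\,dA$, which is $\tfrac{1}{2\pi}\log(\diam(\widehat G)/r)$ on the annulus $\{r\leq|z-x|\leq\diam(\widehat G)\}$.

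The main obstacle is the behavior near $x$, where $|\nabla\tilde u|^2=1/(4\pi^2|z-x|^2)$ is not integrable and, worse, a single degenerate face incident to $x$ (one whose two diagonals have very different lengths) can by itself contribute an arbitrarily large amount to $\EE^\bullet(\theta)$. I would resolve this by spreading out the source: using $r\geq 3\e$, set $r_0=r/2$ and replace $\tilde u$ by the smoothed function $\rho(|w-x|)\cdot\arg(w-x)/(2\pi)$, where $\rho\colon[0,\infty)\to[0,1]$ is a smooth nondecreasing cutoff with $\rho\equiv 0$ on $[0,r_0/2]$ and $\rho\equiv 1$ on $[r_0,\infty)$. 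The modified flow has all its nonzero divergences at primal vertices $v$ with $|v-x|\leq r_0+\e\leq r$, so it is still a flow from $A$ to $\partial V^\bullet$; its strength remains $1$ by a flux computation along any dual cycle enclosing $B_{r_0}$, whose circulation equals the winding of $\arg(\cdot-x)$ around $x$; and its energy splits into three pieces: zero on faces entirely inside the $\rho\equiv 0$ region; a universal $O(1)$ on the transition annulus $B_{r_0}\setminus B_{r_0/2}$ (where the smoothed function has $|\nabla|\leq C/r$ on a region of area $O(r^2)$, so Proposition~\ref{sum integral convergence} applied face-by-face produces this bound); and the already-discussed $O(\log(\diam(\widehat G)/r))$ on faces outside $B_{r_0/2}$.
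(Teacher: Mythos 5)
Your overall strategy matches the paper's: discretize the radial unit outflow by setting $\theta(e_Q^\bullet)$ proportional to the change of $\arg(\cdot-x)$ along the dual edge $e_Q^\circ$, verify the node law by a winding-number argument, and bound the energy face-by-face using $|\nabla\arg| = 1/|\cdot|$. You also correctly identify the central difficulty: a single face incident to $x$ can by itself have unbounded energy contribution, so the flow must be tamed near $x$.

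Where you diverge from the paper is in how you tame it, and this is where a genuine gap appears. The paper leaves the flow untouched as a flow from $\{x\}$ and then simply \emph{sets $\varphi(e)=0$ on every primal edge with both endpoints in $A$}. Since $\partial V^\bullet\cap B=\varnothing$ (because $B\subset\Int(G)$), this modification does not touch any edge into $\partial V^\bullet$, so the strength is unchanged; and it does not touch any edge incident to a vertex outside $A$, so the node law is preserved on $V^\bullet\setminus A$. Only edges with an endpoint at distance $>r$ from $x$ survive, and for those the face $Q$ sits at distance at least $r-2\e\geq r/3$ from $x$, which is exactly what the $\log$-integral needs. This is a one-line discrete fix.

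Your fix instead smooths the potential, replacing $\arg(w-x)/(2\pi)$ by $\rho(|w-x|)\arg(w-x)/(2\pi)$. But this object is not a single-valued function of $w$, and your recipe for the unmodified case --- ``interpret the difference as the continuous change of $\arg$ along the dual edge, discarding branch-cut jumps'' --- does not carry over. Concretely, if you choose a branch of $\arg$ on a neighborhood of $e_Q^\circ$ and set $\theta(e)=[\rho(|w_2|)\arg(w_2)-\rho(|w_1|)\arg(w_1)]/(2\pi)$, then shifting the branch by $2\pi$ changes $\theta(e)$ by $\rho(|w_2|)-\rho(|w_1|)$, which is nonzero precisely in the transition annulus. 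The jump across the branch cut has height $\rho(|z|)$, which varies along the cut, so ``discarding the jump'' is not a canonical operation. Equivalently: the perpendicular gradient of $\rho(|z|)\arg(z)/(2\pi)$ is not divergence-free in the transition region, so the modified vector field is no longer the conjugate gradient of a locally single-valued harmonic function, and the $\arg$-difference bookkeeping that worked before now depends on arbitrary choices. This is not merely cosmetic: $\theta$ must be a well-defined element of $\ell^2_-(\vec E^\bullet)$ before one can talk about its node law, strength, or energy, and right now it is not.

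The gap is repairable --- for instance, discretize the smoothed vector field $\psi(|z|)\,z/|z|$ directly via flux through each dual edge, or fix an explicit convention for the branch in the transition annulus and track the resulting $O(\e/r)$ ambiguity through the energy estimate (the node-law and strength issues become vacuous since the ambiguous edges lie inside $A$, where no node law is imposed) --- but either route requires meaningfully more work than you have written, and more than the paper's zeroing-out trick. Once the flow is properly defined, your decomposition of the energy into the three regions and the estimates therein are sound, and your use of $r\geq 3\e$ to make the transition annulus fit inside $A$ is exactly the right observation.
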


We need one more definition to state Proposition \ref{prop:left-to-right}. Let $G = (V^\bullet \sqcup V^\circ, E)$ be a finite orthodiagonal map. For $\rho > 0$, a {\bf $\bm \rho$-edge} of $G^\bullet$ is an edge in $E^\bullet$ whose corresponding dual edge in $E^\circ$ connects two vertices $w,w' \in V^\circ$ with $|w| < \rho \leq |w'|$.

\begin{prop} \label{prop:left-to-right}
Let $G = (V^\bullet \sqcup V^\circ, E)$ be a finite orthodiagonal map with maximal edge length at most $\e$. Fix $r_1,r_2$ with $r_1 \geq \e$ and $r_2 \geq 2r_1$. Let $S,T$ be disjoint subsets of $V^\bullet$ such that for each $\rho \in (r_1,r_2)$ there is a path in $G^\bullet$ from $S$ to $T$ consisting entirely of $\rho$-edges. Then there is a unit flow $\theta$ in $G^\bullet$ from $S$ to $T$ such that
\[
\EE^\bullet(\theta) \leq \frac{C}{\log(r_2 / r_1)}
\]
for some universal constant $C < \infty$.
\end{prop}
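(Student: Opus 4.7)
The plan is to apply Thomson's principle (Proposition \ref{Dirichlet Thomson}): it suffices to exhibit a unit flow $\theta \in \ell^2_-(\vec{E}^\bullet)$ from $S$ to $T$ with $\EE^\bullet(\theta) \leq C/\log(r_2/r_1)$. The guiding intuition is that the hypothesis supplies one path at every radial scale $\rho \in (r_1, r_2)$, and these paths at different $\rho$ live in disjoint thin annuli around the circles $|z|=\rho$, so a log-uniform ``parallel combination'' of them should produce an effective conductance of order $\log(r_2/r_1)$. Concretely, pare each hypothesized path down to a simple one and call it $\gamma_\rho$, let $p_\rho \in \ell^2_-(\vec{E}^\bullet)$ be the associated unit flow oriented from $S$ to $T$, and set
\[
\theta = \int_{r_1}^{r_2} p_\rho \, d\mu(\rho), \qquad d\mu(\rho) = \frac{d\rho}{\rho \log(r_2/r_1)}\,.
\]
Since $\mu$ is a probability measure and each $p_\rho$ has strength $1$, $\theta$ is a unit flow from $S$ to $T$ by linearity.

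For the energy I would use Cauchy--Schwarz in $L^2(\mu)$. Let $e \in \vec{E}^\bullet$ be a directed primal edge with dual endpoints $w_1, w_2 \in V^\circ$ labelled so that $|w_1| \leq |w_2|$. Then $p_\rho(e) = 0$ unless $e$ is a $\rho$-edge, i.e., unless $\rho \in A_e := (|w_1|,|w_2|]$, so
\[
\theta(e)^2 \leq \mu(A_e) \int \1\{e \in \gamma_\rho\}\, d\mu(\rho)\,.
\]
The key quantitative step is $\mu(A_e) \leq \log(|w_2|/|w_1|)/\log(r_2/r_1) \leq |w_1 w_2|/(|w_1| \log(r_2/r_1))$ via $\log(1+x) \leq x$; this $|w_1 w_2|$ cancels the denominator of $r^\bullet(e) = |v_1 v_2|/|w_1 w_2|$, and together with $|w_1| \geq \rho/2$ (valid away from the degenerate scale $\rho \sim \e$) this gives
\[
r^\bullet(e)\, \mu(A_e) \leq \frac{2|v_1 v_2|}{\rho \log(r_2/r_1)}\,.
\]
Summing over $e \in \gamma_\rho$ and swapping sum and integral then yields
\[
\EE^\bullet(\theta) \leq \frac{C}{\log^2(r_2/r_1)} \int_{r_1}^{r_2} \frac{L(\gamma_\rho)}{\rho^2}\, d\rho\,, \qquad L(\gamma_\rho) := \sum_{e \in \gamma_\rho} |v_1 v_2|\,.
\]

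The main obstacle I expect is showing that $\gamma_\rho$ can be chosen so that $L(\gamma_\rho) \leq C\rho$; once that is in hand, the integral above is $O(\log(r_2/r_1))$ and the target bound $C/\log(r_2/r_1)$ follows immediately. Each $\rho$-edge sits inside some face of $G$ whose dual edge crosses the circle $|z|=\rho$, and since that face has diameter $\leq 2\e$, the edge lies within Euclidean distance $O(\e)$ of the circle. Hence $\gamma_\rho$ is embedded in an annulus of width $O(\e)$ around the circle of radius $\rho$. To control its length by $O(\rho)$ I would extract $\gamma_\rho$ as a simple sub-arc of the primal cycle $C_\rho$ of \emph{all} $\rho$-edges, which exists by planar duality (the dual edges crossing the circle $|z|=\rho$ form a cut in the dual graph between $\{|w|<\rho\}$ and its complement, and cuts in a planar graph are dual to cycles), and exploit that $C_\rho$ is a simple closed curve winding once around the origin inside a thin annulus, so its total Euclidean length is bounded by a universal constant times the circumference $2\pi\rho$. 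The degenerate subinterval $\rho \in (r_1, 4\e)$, where $|w_1|$ may fail to be comparable to $\rho$, is handled by a direct estimate on those scales and absorbed into the universal constant.
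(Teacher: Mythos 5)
Your construction of $\theta$ as a log-uniform average of path flows is exactly the paper's method of random paths, and the reduction to
\[
\EE^\bullet(\theta) \leq \frac{C}{\log^2(r_2/r_1)} \int_{r_1}^{r_2} \frac{L(\gamma_\rho)}{\rho^2}\, d\rho
\]
via Cauchy--Schwarz is algebraically correct. The genuine gap is the length bound $L(\gamma_\rho) \leq C\rho$, which you correctly flag as the crux but do not prove, and which is in fact false in the generality of the theorem. A simple arc or simple closed curve confined to the annulus $\{\rho - 2\e < |z| < \rho + 2\e\}$ can have Euclidean length arbitrarily larger than $2\pi\rho$: a sawtooth making radial excursions of height $\sim\e$ with tangential progress $\delta \ll \e$ per tooth, wound once around, has length $\sim \rho\e/\delta \to \infty$ as $\delta \to 0$. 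Orthodiagonal maps with unbounded vertex degrees or face aspect ratios can realize $\rho$-cycles of exactly this shape, and the theorem makes no such regularity assumptions --- removing them is its entire point. Simplicity and winding number one are not enough.

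The paper avoids a path-length term altogether by \emph{not} applying Cauchy--Schwarz. It bounds $|\theta(e_Q^\bullet)| \leq \mu(A_e) = Z^{-1}(\tlog|w_2| - \tlog|w_1|)$ directly, so the resistance-weighted per-edge contribution becomes
\[
r(e_Q^\bullet)\,\theta(e_Q^\bullet)^2 \leq \frac{|v_1 v_2| \cdot |w_1 w_2|}{Z^2}\left(\frac{\tlog|w_2| - \tlog|w_1|}{|w_1 w_2|}\right)^2 = \frac{2\area(Q)}{Z^2}\left(\frac{\tlog|w_2| - \tlog|w_1|}{|w_1 w_2|}\right)^2\,,
\]
and the gradient-square factor is at most $\rho_{\min}(Q)^{-2} \wedge r_1^{-2}$, where $\rho_{\min}(Q) = \min\{|w| : w \in w_1 w_2\}$. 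Since $\rho_{\min}(Q) \geq (|w| - 2\e)_+$ for every $w \in Q$, the per-face contribution is dominated by the integral of $(|w|-2\e)_+^{-2} \wedge r_1^{-2}$ over $Q$. Summing over the disjoint faces $Q \subset D(0, r_2+2\e)$ and integrating gives $Z^2 \EE^\bullet(\theta) \leq C\log(r_2/r_1)$, with no edge-counting. The point is that the area identity $|v_1 v_2| \cdot |w_1 w_2| = 2\area(Q)$ must be used to produce a \emph{quadratic} cancellation against $\mu(A_e)^2$, not just a linear one; your Cauchy--Schwarz step spends one factor of $|w_1 w_2|$ on $\mu(A_e)$ and leaves $|v_1 v_2|$ stranded, which is exactly what forces the untenable length bound. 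Replacing $\theta(e)^2 \leq \mu(A_e)\int \1\{e \in \gamma_\rho\}\,d\mu$ with the cruder $\theta(e)^2 \leq \mu(A_e)^2$ recovers the paper's proof.
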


We remark that by Thomson's principle (discussed at the end of Section \ref{Electric networks}), Propositions \ref{prop:center-to-outside} and \ref{prop:left-to-right} provide upper bounds on the effective resistance between $A$ and $\partial V^\bullet$ and between $S$ and $T$. When using the propositions, we will plug the low-energy flows directly into Proposition \ref{Dirichlet Thomson} rather than explicitly considering the effective resistance.

\begin{proof}[Proof of Proposition \ref{prop:center-to-outside}]
Place $x$ at the origin for convenience. We define an antisymmetric function $\varphi$ on $\vec{E}^\bullet$, which will be a flow in $G^\bullet$ from $x$ to $\partial V^\bullet$, as follows. Given a face $Q = [v_1,w_1,v_2,w_2]$ of $G$, let $e \in \vec{E}^\bullet$ be the orientation of $e_Q^\bullet$ with $e^- = v_1$ and $e^+ = v_2$. Choose a branch of $\arg$ which is defined at every point on the edge $e_Q^\circ$ and set $\varphi(e) = \arg(w_2) - \arg(w_1)$, $\varphi(-e) = \arg(w_1) - \arg(w_2)$. Note that the values $\varphi(e),\varphi(-e)$ do not depend on the branch of $\arg$ chosen.

For any $v \in \Int(V^\bullet)$, let $w_1,\ldots,w_k \in V^\circ$ be the neighbors of $v$ in $G$, listed in counterclockwise order around $v$. See Figure \ref{local-nbd}. Taking indices mod $k$,
\begin{equation} \label{telescope}
\sum_{e \in \vec{E}^\bullet \,:\, e^- = v} \varphi(e) = \sum_{j=1}^k [\arg(w_{j+1}) - \arg(w_j)]
\end{equation}
where we may choose the branches of $\arg$ so that the sum on the right side telescopes. The face of $G^\circ$ that contains $v$ is bounded by a simple cycle of edges in $E^\circ$. Let $C_v$ be the counterclockwise orientation of this cycle. Then $C_v$ visits the vertices $w_1,\ldots,w_k$ in order before returning to $w_1$, as shown in Figure \ref{local-nbd}. The right side of \eqref{telescope} is precisely the net change in $\arg$ when traversing $C_v$. This is $2\pi$ multiplied by the winding number of $C_v$ about the origin, where we placed $x$. The only vertex of $V^\bullet$ enclosed by $C_v$ is $v$ itself. Thus, if $v \neq x$, the winding number of $C_v$ about $x$ is zero and $\varphi$ satisfies Kirchhoff's node law at $v$. Since the winding number of $C_x$ about $x$ is $1$, the net flow of $\varphi$ out of $x$ is $2\pi$.

We have shown that $\varphi$ is a flow of strength $2\pi$ from $x$ to $\partial V^\bullet$. In particular, the net flow into $\partial V^\bullet$ is $2\pi$.
Define $\varphi_1: \vec{E}^\bullet \to \R$ by setting $\varphi_1(e) = 0$ if both $e^-, e^+ \in A$ and $\varphi_1(e) = \varphi(e)$ otherwise. For every $v \in V^\bullet \setminus A$, we have $\varphi_1 = \varphi$ on all edges incident to $v$; this means that $\varphi_1$ is a flow of strength $2\pi$ from $A$ to $\partial V^\bullet$. We normalize $\varphi_1$ into a unit flow by setting $\theta = \frac{1}{2\pi} \varphi_1$.

To bound the energy of $\theta$, let $Q = [v_1,w_1,v_2,w_2]$ be a face of $G$ such that $v_1,v_2 \in V_1^\bullet$ are not both in $A$. Let $|\theta(e_Q^\bullet)|$ denote the common value of $|\theta(e)|, |\theta(-e)|$ for both orientations $e,-e$ of $e_Q^\bullet$. The contribution of $e_Q^\bullet$ to $\EE^\bullet(\theta)$ is
\[
\frac{|v_1 v_2|}{|w_1 w_2|} |\theta(e_Q^\bullet)|^2 = \frac{1}{4\pi^2}(|v_1 v_2| \cdot |w_1 w_2|) \left( \frac{
\arg(w_2) - \arg(w_1)}{|w_1 w_2|} \right)^2\, .
\]
We recognize $|v_1 v_2| \cdot |w_1 w_2|$ as twice the area of $Q$.

Let $\widetilde{Q}$ be the convex hull of the closure of $Q$, which has diameter at most $2\e$. Since at least one of $v_1,v_2$ is not in $A$, its distance from $x$ is more than $r$. Thus $\dist(\widetilde{Q}, x) \geq r - 2\e$, and there is a branch of $\arg$ defined on all of $\widetilde{Q}$. Let $\rho_{\mathrm{min}} = \min\{ |w| : w \in w_1 w_2 \}$, so that $\rho_{\mathrm{min}} \geq r - 2\e$ and also $\rho_{\mathrm{min}} \geq |w| - 2\e$ for each $w \in \widetilde{Q}$. Using that $|\nabla \arg(w)| = 1/|w|$,
\[
|\arg(w_2) - \arg(w_1)| \leq |w_1 w_2| / \rho_{\mathrm{min}}\, .
\]
Therefore,
\[
2\area(Q) \left( \frac{\arg(w_2) - \arg(w_1)}{|w_1 w_2|} \right)^2 \leq \frac{2\area(Q)}{\rho_{\mathrm{min}}^2} \leq 2 \int_Q \frac{1}{[(|w| \vee r) - 2\e]^2} \, dw\, .
\]
Let $U$ be the annulus centered at the origin with inner radius $r-2\e$ and outer radius $\diam(\widehat{G})$. We compute
\[
\begin{split}
4\pi^2 \EE^\bullet(\theta) &\leq \sum_{Q \subset U} 2 \int_Q \frac{1}{[(|w| \vee r) - 2\e]^2} \, dw \\
&\leq 2 \int_U \frac{1}{[(|w| \vee r) - 2\e]^2} \, dw \\
&= \frac{2\pi[r^2 - (r-2\e)^2]}{(r-2\e)^2} + 4\pi \int_r^{\diam(\widehat{G})} \frac{1}{(t - 2\e)^2} t\,dt \\
&\leq \frac{8\pi \e (r - \e)}{(r-2\e)^2} + 4\pi \int_{r - 2\e}^{\diam(\widehat{G})} \frac{s + 2\e}{s^2} \,ds \\
&\leq 16\pi + 4\pi \left[ \log\left( \frac{\diam(\widehat{G})}{r - 2\e} \right) + 2 \right]\, .
\end{split}
\]
Since $B \subset \Int(G)$, we have $\diam(\widehat{G}) \geq 2r$ and so $\log(\diam(\widehat{G})/r) \geq \log(2)$. This and the assumption $r \geq 3\e$ imply that $\log(\diam(\widehat{G})/(r-2\e))$ is bounded above by $C \log(\diam(\widehat{G})/r)$. Thus we conclude that
\[
\EE^\bullet(\theta) \leq C \log\left( \frac{\diam(\widehat{G})}{r} \right) \, . \qedhere
\]
\end{proof}

\begin{proof}[Proof of Proposition \ref{prop:left-to-right}]
We use the method of random paths \cite[p.~40]{LP16}. Let $\rho$ be a random variable supported on $[r_1,r_2]$ whose density at $t$ is proportional to $1/t$. Thus
\[
\Prob(a < \rho < b) = \frac{1}{Z} \int_a^b \1\{ t \in [r_1,r_2] \} \frac{dt}{t}\, ,
\]
with
\[
Z = \int_{r_1}^{r_2} \frac{dt}{t} = \log(r_2/r_1)\, .
\]
Because $\rho \in (r_1,r_2)$ almost surely, by assumption there is a simple path $P_\rho$ in $G^\bullet$ from $S$ to $T$ consisting entirely of $\rho$-edges. Since $\rho$ is a random variable, the path $P_\rho$ is random.

Define $\theta: \vec{E}^\bullet \to \R$ by
\[
\theta(e) = \Prob(e \in P_\rho) - \Prob(-e \in P_\rho)\, .
\]
We know that $\theta$ is a unit flow in $G^\bullet$ from $S$ to $T$, since it is a weighted average of the unit flows from $S$ to $T$ along the various options for the path $P_\rho$. As in the proof of Proposition \ref{prop:center-to-outside}, for each face $Q = [v_1,w_1,v_2,w_2]$ of $G$ we let $|\theta(e_Q^\bullet)|$ be the common value of $|\theta(e)|, |\theta(-e)|$ for both orientations $e,-e$ of $e_Q^\bullet$. Under the convention that $|w_1| \leq |w_2|$, the path $P_\rho$ can only pass through $e_Q^\bullet$ if $|w_1| < \rho \leq |w_2|$. Thus,
\[
\begin{split}
|\theta(e_Q^\bullet)| &\leq \Prob(\text{$P_\rho$ passes through $e_Q^\bullet$}) \\
&\leq \frac{1}{Z} \int_{|w_1|}^{|w_2|} \1\{ t \in [r_1,r_2] \} \frac{dt}{t} \\
&= \frac{1}{Z} (\tlog |w_2| - \tlog |w_1|)\, ,
\end{split}
\]
where we define the ``truncated log''
\[
\tlog(t) = \log((t \vee r_1) \wedge r_2)\, .
\]
The contribution of $e_Q^\bullet$ to $\EE^\bullet(\theta)$ is
\[
\frac{|v_1 v_2|}{|w_1 w_2|} |\theta(e_Q^\bullet)|^2 \leq \frac{1}{Z^2}(|v_1 v_2| \cdot |w_1 w_2|) \left( \frac{\tlog |w_2| - \tlog |w_1|}{|w_1 w_2|} \right)^2
\]
where again the quantity $|v_1 v_2| \cdot |w_1 w_2|$ is twice the area of $Q$.

The function $f(w) = \log |w|$ on $\R^2 \setminus \{0\}$ has $|\nabla f(w)| = 1/|w|$. Therefore, if $\rho_{\mathrm{min}} = \min\{ |w| : w \in w_1 w_2 \}$,
\[
\big| \tlog |w_2| - \tlog |w_1| \big| \leq \big| \log |w_2| - \log |w_1| \big| \leq |w_1 w_2| / \rho_\mathrm{min}\, .
\]
In addition, the Lipschitz constant of $w \mapsto \tlog |w|$ is $1/r_1$, so we also have
\[
\big| \tlog |w_2| - \tlog |w_1| \big| \leq |w_1 w_2| / r_1\, .
\]

Because each $w \in Q$ has $(|w| - 2\e)_+ \leq \rho_{\mathrm{min}}$ (where $t_+ = t \vee 0$),
\[
2\area(Q) \left( \frac{\tlog |w_2| - \tlog |w_1|}{|w_1 w_2|} \right)^2 \leq 2 \int_Q \left( \frac{1}{(|w| - 2\e)_+^2} \wedge \frac{1}{r_1^2} \right) dw\, .
\]
Let $D$ be the closed disk of radius $r_2 + 2\e$ centered at the origin. If $Q$ intersects $\R^2 \setminus D$, then $|w_1|,|w_2| > r_2$ and so $|\theta(e_Q^\bullet)| = 0$. Therefore,
\[
\begin{split}
Z^2 \EE^\bullet(\theta) &\leq \sum_{Q \subset D} 2 \int_Q \left( \frac{1}{(|w| - 2\e)_+^2} \wedge \frac{1}{r_1^2} \right) dw \\
&\leq 2 \int_D \left( \frac{1}{(|w| - 2\e)_+^2} \wedge \frac{1}{r_1^2} \right) dw \\
&= \frac{2}{r_1^2} \cdot \pi(r_1 + 2\e)^2 + 4\pi \int_{r_1 + 2\e}^{r_2 + 2\e} \frac{1}{(t - 2\e)^2} t\,dt \\
&\leq 18\pi + 4\pi \int_{r_1}^{r_2} \frac{s + 2\e}{s^2} \,ds \\
&\leq 4\pi \log(r_2/r_1) + 26\pi \\
&\leq C \log(r_2/r_1)\, .
\end{split}
\]
Since $Z = \log(r_2/r_1)$, the conclusion follows.
\end{proof}

\section{Equicontinuity of discrete harmonic functions}
\label{Equicontinuity section}

In this section, we prove the following statement. It is analogous to \cite[Lemma 2.4]{S13} and \cite[Proposition 4.3]{W15}, except that in those results, $G$ is required to satisfy certain regularity conditions and the constant $C$ depends on those conditions.

\begin{prop} \label{equicontinuity}
Let $G = (V^\bullet \sqcup V^\circ, E)$ be a finite orthodiagonal map with maximal edge length at most $\e$. Let $h: V^\bullet \to \R$ be discrete harmonic on $\Int(V^\bullet)$. Given $x,y \in V^\bullet$, set $r = \frac{1}{2}|x-y|$ and fix $R \geq 2r + 3\e$. Let $D$ be the closed disk of radius $R$ centered at $\frac{1}{2}(x+y)$, and set
\[
\beta = \max_{v,v' \in \partial V^\bullet \cap D} |h(v) - h(v')|\, ,
\]
with $\beta = 0$ if there are no such vertices. Then there exists a universal constant $C < \infty$ such that
\[
|h(x) - h(y)| \leq \frac{C \EE^\bullet(h)^{1/2}}{\log^{1/2}\left[ R \big/ (r + \e) \right]} + \beta\, .
\]
\end{prop}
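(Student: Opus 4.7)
My approach is to produce a low-energy unit flow in $G^\bullet$ via Proposition \ref{prop:left-to-right} and then feed it into the Dirichlet--Thomson-type estimate of Proposition \ref{Dirichlet Thomson} to convert a flow-energy bound into a bound on $|h(x)-h(y)|$, with boundary contributions absorbed into $\beta$.

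\emph{Setup and reductions.} I translate coordinates so that $m := \tfrac12(x+y)$ sits at the origin, giving $|x|=|y|=r$. If $|h(x)-h(y)| \leq \beta$ the bound is immediate, so assume otherwise; without loss of generality $h(y) > h(x)$.

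\emph{Flow construction and the main estimate.} Apply Proposition \ref{prop:left-to-right} centered at the origin with $r_1 = r+\e$ and $r_2 = R$; the conditions $r_1 \geq \e$ and $r_2 \geq 2r_1$ follow from $R \geq 2r+3\e$. For each $\rho \in (r_1,r_2)$ the $\rho$-circle encloses both $x$ and $y$. I would choose disjoint sets $S, T \subseteq V^\bullet$ such that for every such $\rho$ the $\rho$-edges supply a path in $G^\bullet$ from $S$ to $T$; the natural candidate is to split $\partial V^\bullet \cap D$ into two arcs along the chord through $m$ perpendicular to $\overline{xy}$, with $x$ and $y$ lying on the two respective ``sides.'' Proposition \ref{prop:left-to-right} then furnishes a unit flow $\theta$ with $\EE^\bullet(\theta) \leq C/\log(R/(r+\e))$, and Proposition \ref{Dirichlet Thomson} (with $f=h$) gives $\gap_{S,T}(h) \leq C\,\EE^\bullet(h)^{1/2}/\log^{1/2}(R/(r+\e))$. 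Since $S \cup T \subseteq \partial V^\bullet \cap D$ has $h$-oscillation at most $\beta$, and since the $\rho$-edge structure provides a topological cut separating $x$ from $y$, the discrete maximum principle (Proposition \ref{max principle}) allows me to conclude $|h(x)-h(y)| \leq \gap_{S,T}(h) + C\beta$, which gives the desired inequality after rearranging constants.

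\emph{Main obstacle.} The crux is the construction of $S,T$ and verification of the $\rho$-path hypothesis. Since both $x$ and $y$ lie strictly inside the $\rho$-circle, the $\rho$-edges naturally surround $\{x,y\}$ rather than separate them, so getting a $\rho$-edge path with endpoints cleanly partitioned into $S$ and $T$ with $x,y$ on the correct topological sides requires a careful planar-topology argument. The special case where the $\rho$-circle remains entirely inside $\widehat G$---whence the $\rho$-edges form a closed cycle rather than a path and Proposition \ref{prop:left-to-right} does not apply directly---needs separate handling, likely by cutting the cycle at well-chosen vertices or by invoking an ancillary resistance estimate such as Proposition \ref{prop:center-to-outside}. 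Converting the gap estimate into a bound on $|h(x)-h(y)|$ via the maximum principle is the remaining subtle step and depends crucially on the separating structure produced by the $\rho$-edges together with the oscillation bound $\beta$ on $\partial V^\bullet \cap D$.
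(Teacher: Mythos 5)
Your overall framework---build a low-energy unit flow via Proposition~\ref{prop:left-to-right}, feed it into Proposition~\ref{Dirichlet Thomson}, absorb boundary oscillation into~$\beta$---is the right one, and it matches the paper. But the specific choice of $S,T$ you propose is wrong, and the final step converting $\gap_{S,T}(h)$ into a bound on $|h(x)-h(y)|$ does not work.

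The missing idea is that $S$ and $T$ must be chosen as \emph{level sets of~$h$}, not by a geometric split of the boundary. If you take $S,T$ to be two arcs of $\partial V^\bullet\cap D$, then $S\cup T\subseteq\partial V^\bullet\cap D$, so $\gap_{S,T}(h)$ is already bounded above by $\beta$; the resulting inequality $\gap_{S,T}(h)\le C\,\EE^\bullet(h)^{1/2}/\log^{1/2}(\cdot)$ carries no information about $h(x)$ or $h(y)$ at all, and you cannot then ``upgrade'' to $|h(x)-h(y)|\le\gap_{S,T}(h)+C\beta$ via the maximum principle---that inference is simply not valid. The paper instead sets $A=\{v: h(v)\le h(x)\}$ and $B=\{v: h(v)\ge h(y)\}$ (then $B'=B\cup(\partial V^\bullet\cap D)$, and symmetrically $A'$), so that by construction $\gap_{A,B}(h)=h(y)-h(x)$; the boundary adjustment then costs exactly the oscillation $\beta$, handled by case analysis on whether $\alpha_1=\beta_1-h(x)$ and $\alpha_2=h(y)-\beta_2$ are positive.

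The $\rho$-path hypothesis is verified not by topological separation of $x$ from $y$ by the $\rho$-edges, but by the opposite observation: since $A$ contains $x$ and (by the discrete maximum principle) a whole path from $x$ out to $\partial V^\bullet$, and likewise $B$ contains a path from $y$ to $\partial V^\bullet$, both $A$ and $B$ must cross the $\rho$-edge cut. The case you flag as problematic---the $\rho$-circle fully inside $\widehat G$, $\rho$-edges forming a closed cycle---is in fact unproblematic in this formulation, because both level-set paths pierce that cycle and a connected subpath of the cycle then runs from $A$ to $B'$. This is precisely Lemma~\ref{rho-path}/\ref{rho-path-application} in the paper, carried out on augmented duals $\bar G^\bullet,\bar G^\circ$ so that plane duality is exact. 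In short: the obstacle you identified is real, but the geometric chord-splitting of the boundary is a dead end; the level-set choice of $A,B$ is the ingredient that resolves both the choice of $S,T$ and the conversion to $|h(x)-h(y)|$.
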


To prove Proposition \ref{equicontinuity}, we may assume for convenience that the origin is located at $\frac{1}{2}(x+y)$. Then $|x| = |y| = r$ and the disk $D$ is centered at the origin. We also may assume without loss of generality that $h(x) < h(y)$.

The main step in the proof is an application of Proposition \ref{prop:left-to-right}, which is made possible by the following lemma. We refer to the beginning of Section \ref{sec:resistance} for the definition of a $\rho$-edge.

\begin{lemma}
\label{rho-path-application}
Under the assumptions of Proposition \ref{equicontinuity}, suppose that the origin is located at $\frac{1}{2}(x+y)$ and that $h(x) < h(y)$. Define
\begin{align*}
A &= \{ v \in V^\bullet : h(v) \leq h(x) \} \\
B &= \{ v \in V^\bullet : h(v) \geq h(y) \}
\end{align*}
and set $B' = B \cup (\partial V^\bullet \cap D)$. Assume that $A \cap B' = \varnothing$. Then, for every $\rho \in (r+\e, R-\e)$, there is a path in $G^\bullet$ from $A$ to $B'$ consisting entirely of $\rho$-edges.
\end{lemma}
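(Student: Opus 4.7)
The plan is to construct a closed curve $W$ in $\widehat{G}$ that approximates the circle $C_\rho = \{w \in \R^2 : |w| = \rho\}$ (with the origin placed at $(x+y)/2$) and surrounds the origin, then apply the discrete minimum principle on the region enclosed by $W$ to extract the desired $\rho$-path. The curve $W$ is built by traversing the arcs of $C_\rho \cap \Int(G)$: as one moves along such an arc through a sequence of inner faces of $G$, a case analysis on whether the $C_\rho$-entry and $C_\rho$-exit edges of each face share a $V^\bullet$ corner, a $V^\circ$ corner, or lie on opposite sides shows that the faces with $V^\circ$-corner or opposite-edge crossings contribute a $\rho$-edge, consecutive such contributions share a primal vertex, and the $V^\bullet$-corner case preserves this common-primal-vertex structure even without contributing a $\rho$-edge. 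Where $C_\rho$ exits $\widehat{G}$ (equivalently where $\partial G$ dips inside $C_\rho$), we close $W$ by following the corresponding arcs of $\partial G$.

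Because each $\rho$-edge lies in a face of diameter at most $2\e$ containing a point of $C_\rho$, its endpoints satisfy $|v| < \rho + \e$; the vertices on the $\partial G$-arc portions of $W$ satisfy $|v| < \rho$. Hence $W$ and the closed region $I_W$ it bounds lie in the closed disk of radius $\rho + \e$. Since $\rho + \e < R$, any vertex of $\partial V^\bullet$ inside $I_W$ automatically lies in $\partial V^\bullet \cap D \subseteq B'$. Both $x$ and $y$ are strictly inside $W$ because $|x| = |y| = r < \rho - \e$.

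Now apply the discrete minimum principle: since $h$ is discrete harmonic on $\Int(V^\bullet)$ and $x \in I_W$, the minimum of $h$ on $I_W$ is at most $h(x)$ and is attained at a vertex $v^*$ on the discrete boundary of $I_W$, consisting of the vertices of $V^\bullet$ on $W$ together with $\partial V^\bullet \cap I_W$. If $v^*$ were any boundary vertex of $G$ (either on a $\partial G$-arc portion of $W$ or strictly inside $W$), then the previous paragraph would place $v^* \in B'$, while $h(v^*) \leq h(x)$ would put $v^* \in A$, contradicting $A \cap B' = \emptyset$. Therefore $v^*$ is an interior vertex of a $\rho$-edge portion of $W$, that is, a common endpoint of two consecutive $\rho$-edges along $W$.

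Starting from $v^*$ and walking along its $\rho$-edge portion to an endpoint of that portion produces a path in $G^\bullet$ consisting entirely of $\rho$-edges; in the general case its endpoint is a transition to a $\partial G$-arc, hence a boundary vertex of $G$ in $B'$, while in the special case $C_\rho \subseteq \Int(G)$ (when $W$ has no $\partial G$-arc portions) the analogous maximum principle argument applied on $I_W$ produces a vertex of $V_W \cap B$ and the $\rho$-path runs between the min and max vertices along the closed walk $W$. The main obstacle is the combinatorial construction in the first step: verifying the four-case analysis that consecutive $\rho$-edges share a primal vertex, carefully handling transitions between $C_\rho$-arcs and $\partial G$-arcs, and confirming via the Jordan curve theorem that $W$ encloses a region containing $x$ and $y$ with the boundary-location property used in Step 3.
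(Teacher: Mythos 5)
Your approach is genuinely different from the paper's, and it contains real gaps that you yourself flag at the end. The paper does not trace the circle $C_\rho$ at all. Instead, it first augments $G^\bullet$ and $G^\circ$ to exact plane duals $\bar G^\bullet, \bar G^\circ$ (adding one dual vertex $z$ at infinity and a cycle of new primal and dual edges along $\partial G$), and then applies the general plane-graph Lemma~\ref{rho-path}. That lemma never mentions the circle: it fixes a dual vertex $u$ near $x$, defines $S_\rho$ as the set of dual vertices reachable from $u$ inside the open disk of radius $\rho$, takes $H[S_\rho]$ to be the union of boundaries of the faces $f(w)$ for $w\in S_\rho$, and looks at $K$, the boundary of the outer face of $H[S_\rho]$. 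Two pleasant facts drop out automatically: $K$ is connected (it is the boundary of a face of a connected plane subgraph), and every edge of $K$ is a $\rho$-edge (an edge of $K$ borders $f(w)$ with $w\in S_\rho$ and $f(w')$ with $w'\notin S_\rho$; since $w'$ is a dual neighbor of $w$, $|w'|<\rho$ would force $w'\in S_\rho$, so $|w'|\geq\rho$). The existence of a vertex of $K$ in $A$ and one in $B'$ then follows from short topological arguments about paths crossing the boundary of $\fout_\rho$, with the discrete maximum principle used beforehand (as in your proof) to show $A$ and $B$ extend to $\partial V^\bullet$.

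The concrete problems with your route, beyond the ones you acknowledge, are the following. (1) It is not true that all points of a $\rho$-edge have modulus $< \rho+\e$; the curve $e_Q^\bullet$ lies in $Q$, and the best one can say is $|z|\leq\rho+2\e$ since $\diam(Q)\leq 2\e$. This is not fatal but forces re-tuning the constants. (2) When $\rho$ is close to $r+\e$ (which it must be allowed to be, since the lemma is claimed for every $\rho\in(r+\e,R-\e)$), the curve $W$ can dip to distance $\rho-2\e<r$ from the origin, so it is not automatic that $x$ lies in the region enclosed by $W$; your argument would actually require $\rho > r + 2\e$ or a finer control of the $\rho$-edge geometry. (3) The circle $C_\rho$ need not be in ``general position'' with respect to $G$: it can pass through vertices, be tangent to edges, or enter and exit a face through the same edge, and your three-case analysis does not cover these; the lemma is needed for every $\rho$, not just generic $\rho$. (4) The closed curve $W$ can self-intersect (the $\rho$-edge walk is a walk in $G^\bullet$, not necessarily a cycle), so invoking the Jordan curve theorem and defining $I_W$ is not justified as stated. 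The combinatorial definition of $S_\rho$ in Lemma~\ref{rho-path} sidesteps (2)--(4) entirely, which is precisely why the paper organizes the proof that way. If you want to salvage the circle-tracing idea, the cleanest fix is to replace the circle $C_\rho$ by the purely combinatorial object $S_\rho$ and its outer-face boundary, at which point you have essentially reconstructed Lemma~\ref{rho-path}.
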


We now show how Proposition \ref{equicontinuity} follows from Lemma \ref{rho-path-application} and Proposition \ref{prop:left-to-right}.

\begin{proof}[Proof of Proposition \ref{equicontinuity}]
We may assume that the origin is located at $\frac{1}{2}(x+y)$ and that $h(x) < h(y)$, so Lemma \ref{rho-path-application} applies. Define $A,B,B'$ as in the statement of that lemma. If $A \cap B' = \varnothing$, then we invoke Proposition \ref{prop:left-to-right} with $S = A$, $T = B'$, $r_1 = r+\e$, $r_2 = R-\e$ to find a unit flow $\theta$ in $G^\bullet$ from $A$ to $B'$ with
\begin{equation} \label{theta-low-energy}
\EE^\bullet(\theta) \leq \frac{C}{\log\left[ (R-\e) \big/ (r + \e) \right]} \leq \frac{C}{\log\left[ R \big/ (r + \e) \right]}
\end{equation}
(where the value of $C$ changes in the second inequality, using that $R \geq 2r + 3\e$). An exactly parallel argument (using a suitable modification of Lemma \ref{rho-path-application}) shows that if $A' \cap B = \varnothing$, where $A' = A \cup (\partial V^\bullet \cap D)$, then there is a unit flow $\varphi$ in $G^\bullet$ from $A'$ to $B$ with
\begin{equation} \label{phi-low-energy}
\EE^\bullet(\varphi) \leq \frac{C}{\log\left[ R \big/ (r + \e) \right]}\, .
\end{equation}

Assume first that $\partial V^\bullet \cap D$ is empty. Proposition \ref{Dirichlet Thomson} applied to $\theta$ and $h$ yields $h(y) - h(x) = \gap_{A,B}(h) \leq \EE^\bullet(\theta)^{1/2} \EE^\bullet(h)^{1/2}$. The upper bound \eqref{theta-low-energy} completes the proof.

Now suppose that $\partial V^\bullet \cap D$ is nonempty. If $\partial V^\bullet \cap D \subseteq B$, so that $B = B'$, then the argument in the last paragraph works without any changes. If $\partial V^\bullet \cap D \subseteq A$, then use the same argument with $\varphi$ and \eqref{phi-low-energy} instead of $\theta$ and \eqref{theta-low-energy}.

Finally, suppose that $\partial V^\bullet \cap D$ is not a subset of $A$ or of $B$. Let $\beta_1,\beta_2$ be the minimum and maximum, respectively, of $h(v)$ over $v \in \partial V^\bullet \cap D$. Then $\beta = \beta_2 - \beta_1$, and by assumption, $\beta_1 < h(y)$ and $\beta_2 > h(x)$. Let $\alpha_1 = \beta_1 - h(x)$ and $\alpha_2 = h(y) - \beta_2$. See Figure \ref{number line} for a visualization of the values of $h$. We seek an upper bound on $h(y) - h(x) - \beta = \alpha_1 + \alpha_2$.
\begin{figure}
\centering
\begin{tikzpicture}
\draw[<->] (0, 0) -- (6, 0);
\draw (1, -0.1) -- (1, 0.1);
\draw (2, -0.1) -- (2, 0.1);
\draw (3.8, -0.1) -- (3.8, 0.1);
\draw (5, -0.1) -- (5, 0.1);
\draw[very thick] (2, 0) -- (3.8, 0);

\node[below] at (1, -0.1) {\small $h(x)$};
\node[below] at (2, -0.1) {\small $\beta_1$};
\node[below] at (3.8, -0.1) {\small $\beta_2$};
\node[below] at (5, -0.1) {\small $h(y)$};
\node[above] at (1.5, 0) {\small $\alpha_1$};
\node[above] at (2.9, 0) {\small $\beta$};
\node[above] at (4.4, 0) {\small $\alpha_2$};
\end{tikzpicture}
\caption{Number line showing the values of $h$ in the case where $\alpha_1, \alpha_2 > 0$. The thick segment is the range of values of $h(v)$ for $v \in \partial V^\bullet \cap D$.}
\label{number line}
\end{figure}
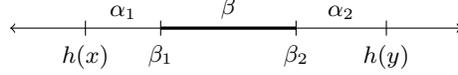

If $\alpha_1 \leq 0$, we bound it by $0$. If $\alpha_1 > 0$, then $A \cap B' = \varnothing$ and Proposition \ref{Dirichlet Thomson} gives $\alpha_1 = \gap_{A,B'}(h) \leq \EE^\bullet(\theta)^{1/2} \EE^\bullet(h)^{1/2}$. Similarly, if $\alpha_2 \leq 0$, we bound it by $0$, while if $\alpha_2 > 0$, then $A' \cap B = \varnothing$ and Proposition \ref{Dirichlet Thomson} gives $\alpha_2 = \gap_{A',B}(h) \leq \EE^\bullet(\varphi)^{1/2} \EE^\bullet(h)^{1/2}$. To conclude, use the bounds \eqref{theta-low-energy} and \eqref{phi-low-energy}.
\end{proof}

It remains to prove Lemma \ref{rho-path-application}. The short version of the proof is that by the discrete maximum principle, both sets $A$ and $B$ must extend to $\partial V^\bullet$. Once we know this, the conclusion follows from general properties of plane graphs and their duals and has nothing to do with orthodiagonal maps specifically. To emphasize this point, we now write the required statement about plane graphs as a freestanding lemma. We have defined $\rho$-edges for orthodiagonal maps; in general, if $H$ is a finite connected plane graph with plane dual $H^\dagger$, a $\rho$-edge of $H$ is an edge whose corresponding dual edge connects two vertices $w,w'$ of $H^\dagger$ with $|w| < \rho \leq |w'|$.

\begin{lemma}
\label{rho-path}
Let $H$ be a finite connected plane graph with plane dual $H^\dagger$. Let $W,W^\dagger$ be the vertex sets of $H,H^\dagger$. For each $w \in W^\dagger$, let $f(w)$ denote the face of $H$ that contains $w$. Assume that each inner face $f(w)$ of $H$ is contained in the open disk of radius $\e$ centered at $w$. Fix $0 < r < R$, with $r+\e < R-\e$, and let $D$ be the closed disk of radius $R$ centered at the origin. Let $z \in W^\dagger$ be the dual vertex contained in the outer face of $H$, and assume that $z \notin D$.

Denote by $\partial W$ the vertices in $W$ that are on the boundary of the outer face of $H$. Let $x,y \in W$ satisfy $|x|,|y| \leq r$. Let $A,B$ be subsets of $W$ such that there is a path in $H$ from $x$ to $\partial W$ whose vertices are all in $A$, and there is a path in $H$ from $y$ to $\partial W$ whose vertices are all in $B$.

Set $B' = B \cup (\partial W \cap D)$. If $A \cap B' = \varnothing$, then for each $\rho \in (r+\e, R-\e)$, there is a path in $H$ from $A$ to $B'$ consisting entirely of $\rho$-edges.
\end{lemma}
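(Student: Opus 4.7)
The plan is to produce a single connected subgraph $\Gamma \subseteq H$ whose edges are all $\rho$-edges and which meets both $A$ and $B'$; any path within $\Gamma$ from an $A$-vertex to a $B'$-vertex then proves the lemma.

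Let $\overline{S} = \bigcup\{\overline{f(w)} : w \in W^\dagger,\; |w| < \rho\}$, the closed region covered by the small inner faces. The face-size hypothesis gives $|w| - \e \leq |p| \leq |w| + \e$ whenever $p \in \overline{f(w)}$, so $\overline{S} \subseteq \{|p| < \rho + \e\}$, and any point of the open disk of radius $\rho - \e$ that lies in an inner face $\overline{f(w)}$ automatically satisfies $|w| \leq |p| + \e < \rho$ and so lies in $\overline{S}$. Since the outer face $f(z)$ is connected, contains $z$ with $|z| > R > \rho + \e$, and is disjoint from $\overline{S}$, one checks that $f(z)$ cannot intrude into $\{|p| < \rho - \e\}$; thus the entire open disk of radius $\rho - \e$ lies in $\overline{S}$. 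Since $|x|, |y| \leq r < \rho - \e$, both $x$ and $y$ lie in a single connected component $K_0$ of $\overline{S}$. Let $U_0$ be the unbounded component of $\R^2 \setminus K_0$ and set $\Gamma = \partial U_0$.

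I claim: (a) $\Gamma$ is connected as a subgraph of $H$, because $K_0$ is a connected compact polygonal region and its outer boundary can be traced by a closed walk of edges of $H$ keeping $U_0$ on one side; (b) every edge of $\Gamma$ has one adjacent face in $K_0$ and one not in $K_0$, so it separates a dual vertex $|w| < \rho$ from one with $|w'| \geq \rho$, making it a $\rho$-edge; (c) any path in $H$ with one endpoint in $K_0$ and the other in $U_0$ must visit a vertex of $\Gamma$, since edges of a planar embedding intersect only at shared vertices. With these properties, I verify $A \cap \Gamma \neq \varnothing$ and $B' \cap \Gamma \neq \varnothing$. Because $A \cap B' = \varnothing$ and $\partial W \cap D \subseteq B'$, the $A$-path from $x$ to $\partial W$ must terminate at some $v_A \in \partial W \setminus D$, so $|v_A| > R > \rho + \e$ places $v_A \in U_0$; by (c) this yields a vertex of $A \cap \Gamma$. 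Let the $B$-path end at $v_B \in \partial W$. If $|v_B| > R$, the same argument produces a vertex of $B \cap \Gamma$. Otherwise $v_B \in \partial W \cap D \subseteq B'$; since $v_B$ lies on the boundary of $f(z) \subseteq U_0$, every neighborhood of $v_B$ meets $U_0$, so $v_B$ cannot be strictly enclosed by $\Gamma$; hence $v_B \in \Gamma \cup U_0$, and either $v_B \in \Gamma \cap B'$ directly or the $B$-path from $y \in K_0$ to $v_B \in U_0$ crosses $\Gamma$.

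The main technical obstacle is purely topological: establishing that $\Gamma$, the outer boundary of the compact connected polygonal region $K_0$, is a connected subgraph of $H$, and verifying the clean dichotomy ``$K_0$ inside, $U_0$ outside'' used to locate $v_A$ and $v_B$ (including the step that $f(z)$ cannot reach into the small disk around the origin). Once these topological facts are secured, the rest is a routine case analysis on where the paths in $A$ and $B$ hit $\partial W$ together with the planar crossing principle in (c).
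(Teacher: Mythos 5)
Your construction ($K_0$ as a connected component of $\overline{S}$, $\Gamma = \partial U_0$) is essentially the same object as the subgraph $K$ used in the paper's proof, and your claims (a)--(c) are sound. The genuine gap is in the step ``one checks that $f(z)$ cannot intrude into $\{|p| < \rho - \e\}$; thus the entire open disk of radius $\rho-\e$ lies in $\overline{S}$.'' The hypothesis only constrains the \emph{inner} faces of $H$ to be small; nothing prevents the outer face from reaching arbitrarily close to the origin. Concretely, if $H$ is a thin two-faces-wide horseshoe whose strip passes within distance $r$ of the origin, the ``hole'' of the horseshoe belongs to the unbounded face $f(z)$ (it connects to infinity through the gap) and intrudes well inside the disk of radius $\rho - \e$. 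The disk is then not contained in $\overline{S}$, so your deduction that $x$ and $y$ lie in the \emph{same} component $K_0$ collapses: $y$ may lie in $U_0$, or in a different component of $\overline{S}$ enclosed by $\Gamma$. Every branch of your case analysis for finding a $B'$-vertex on $\Gamma$ invokes ``the $B$-path from $y \in K_0$,'' so the whole second half of the argument is unsupported in exactly the situation the paper is careful about.

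The paper closes this case with an argument that is missing from your proof. When $y$ lies in the outer face of the separator (your $y \in U_0$), consider the straight segment $xy$; since $|x|,|y| \le r$, every point $p$ of the segment has $|p| \le r$. The segment must cross $\Gamma$ at some point $p$; let $e$ be the edge of $\Gamma$ there. You have already shown $e$ is a $\rho$-edge, so its dual endpoints satisfy $|w| < \rho \le |w'|$. If $w' \ne z$, then $f(w')$ is an inner face containing $p$ within distance $\e$ of $w'$, giving $|w'| \le |p| + \e \le r + \e < \rho$, a contradiction. Hence $w' = z$, so $e$ lies on the boundary of the outer face of $H$, and both endpoints of $e$ are in $\partial W$; since they lie on $\Gamma \subset \overline{S} \subset D$, they are in $\partial W \cap D \subseteq B'$. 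Adding this case (and, correspondingly, allowing $y \notin K_0$) would repair your proof and bring it in line with the paper's.
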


\begin{proof}
Since $x \in A \cap D$, which is disjoint from $\partial W$, we have $x \notin \partial W$. Let $u \in W^\dagger$ be a dual vertex such that $f(u)$ is incident to $x$. Then $u \neq z$ and $|u| \leq r + \e < \rho$. Let $S_\rho$ be the set of vertices $w \in W^\dagger$ such that there is a path from $u$ to $w$ in $H^\dagger$, all of whose vertices including $w$ are in the open disk of radius $\rho$ centered at the origin. Note that $z \notin S_\rho$ since $|z| > R > \rho$. Although we will not need this fact in the rest of the proof, it is the case that $S_\rho$ does not depend on the particular choice of $u$. Indeed, the dual vertices $u'$ with $f(u')$ incident to $x$ form a (not necessarily simple) cycle that bounds the face of $H^\dagger$ containing $x$, and drawing paths from $u$ along this cycle shows that all of these vertices $u'$ are in $S_\rho$.

Let $H[S_\rho]$ be the subgraph of $H$ whose vertices and edges are those that make up the boundaries of the faces $f(w)$, for $w \in S_\rho$. See Figure \ref{HS-rho}. The graph $H[S_\rho]$ is connected. In addition, for each $w \in S_\rho$ we have $|w| < \rho < R-\e$, implying that $f(w)$ is contained inside $D$. Hence all the vertices and edges of $H[S_\rho]$ are in $D$. Let $\fout_\rho$ be the outer face of $H[S_\rho]$, which contains the complement of $D$. Because $H[S_\rho]$ is connected, the boundary of $\fout_\rho$ is a connected subgraph which we label $K$. Every edge $e$ of $K$ borders two faces of $H[S_\rho]$: an inner face $f(w)$ and $\fout_\rho$. Therefore, the corresponding dual edge $e^\dagger$ of $H^\dagger$ has endpoints $w,w' \in W^\dagger$, where $w \in S_\rho$ and $w' \notin S_\rho$. We have $|w| < \rho$. Since $w'$ is adjacent to $w$ in $H^\dagger$, if $|w'| < \rho$ then $w' \in S_\rho$, which is not true. Thus $|w'| \geq \rho$, and $e$ is a $\rho$-edge of $H$.

\begin{figure}
\centering
\includegraphics[width=0.8\textwidth]{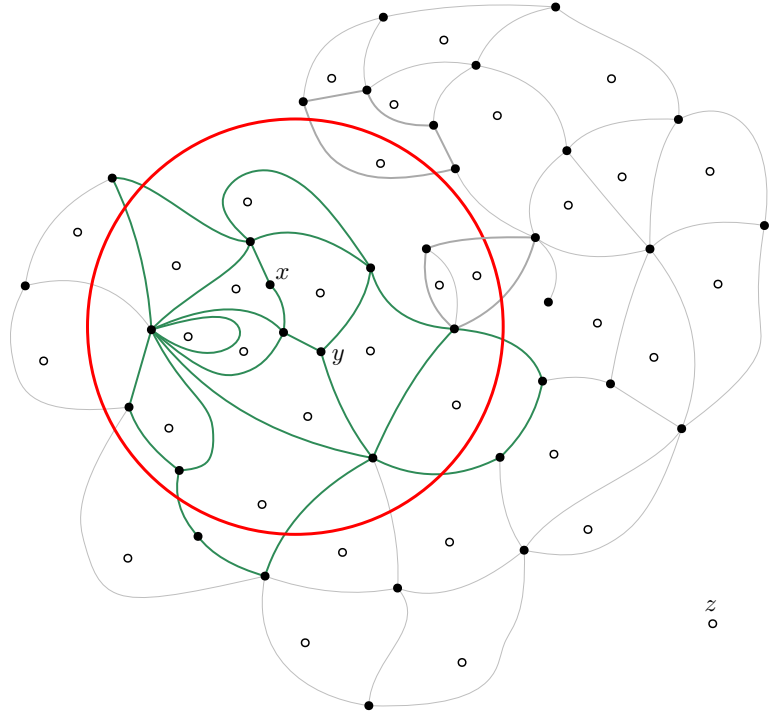}
\caption{Construction of the graph $H[S_\rho]$. The circle of radius $\rho$ centered at the origin is drawn in red. Solid disks represent vertices of $H$, while hollow disks represent vertices of $H^\dagger$. The edges of $H[S_\rho]$ are drawn in green, and the other edges of $H$ are drawn in gray. For clarity, the edges of $H^\dagger$ are not shown. The $\rho$-edges of $H$ are the edges in the boundary of the outer face of $H[S_\rho]$ along with the seven edges drawn in darker gray.}
\label{HS-rho}	
\end{figure}

We know that $K$ is a connected subgraph of $H$ and all of its edges are $\rho$-edges. If we show that $K$ contains a vertex in $A$ and a vertex in $B'$, then the path in $K$ between these vertices will fulfill the requirements. Let $x',y' \in \partial W$ be respectively the endpoint of the path from $x$ to $\partial W$ whose vertices are in $A$, and the endpoint of the path from $y$ to $\partial W$ whose vertices are in $B$. Since $A$ is disjoint from $\partial W \cap D$, we have $x' \notin D$, so $x' \in \fout_\rho$. Meanwhile, $x \notin \fout_\rho$ because $x$ is a vertex of $H[S_\rho]$. The path from $x$ to $x'$ must therefore intersect the boundary $K$ of $\fout_\rho$. Because both the path and $K$ are subgraphs of $H$, this intersection contains a vertex, which is in $A$.

To show that $K$ contains a vertex in $B'$, first suppose that $y \notin \fout_\rho$. If $y' \in \fout_\rho$, then the path from $y$ to $y'$ intersects $K$ at a vertex in $B$, as above. If $y' \notin \fout_\rho$, then we observe that the outer face of $H$ is incident to $y'$ and is a subset of $\fout_\rho$. Hence $y'$ itself must be a vertex of $K$ that is in $B$.

Suppose now that $y \in \fout_\rho$. As $x \notin \fout_\rho$, the straight line $xy$ intersects $K$ at some point $p$ with $|p| \leq r$. Let $e$ be the edge of $K$ that contains $p$, or an edge of $K$ incident to $p$ in case $p$ is at a vertex of $K$. We have seen that $e$ is a $\rho$-edge of $H$, meaning that the endpoints of its corresponding dual edge satisfy $|w| < \rho \leq |w'|$. If $w' \neq z$, so that $f(w')$ is an inner face of $H$, then the distance between $w'$ and $p$ is at most $\e$. This contradicts that $|p| \leq r$ and $|w'| \geq \rho > r+\e$. We conclude that $w' = z$ and $e$ is part of the boundary of the outer face of $H$. The two endpoints of $e$ are vertices of $K$ that are in $\partial W$, and they are also in $D$ because all the vertices of $K$ are contained in $D$. Therefore, they are elements of $\partial W \cap D \subseteq B'$.
\end{proof}

\begin{proof}[Proof of Lemma \ref{rho-path-application}]
In order to apply Lemma \ref{rho-path}, 	we define augmented versions of the graphs $G^\bullet$ and $G^\circ$ that are exact plane duals. Let
\[
((v_1,w_1), (w_1,v_2), (v_2,w_2), (w_2,v_3), \ldots, (v_n,w_n), (w_n,v_1))
\]
be the oriented boundary of the outer face $f$ of $G$ (in either direction), with each $v_j \in \partial V^\bullet$ and each $w_j \in \partial V^\circ$. For each $j$, we draw a new primal edge $e^\bullet_j$ between $v_j$ and $v_{j+1}$ such that $e^\bullet_j$ is contained in $f$ and separates $w_j$ from infinity. (Indices are taken mod $n$.) We also draw a new dual vertex $z$ located in the new outer face, with $|z| > R$. For each $j$, we draw a dual edge $e^\circ_j$ between $w_j$ and $z$ that intersects $e^\bullet_j$ at a single point. See Figure \ref{orthomap-aug}. The \textbf{augmented primal graph} $\bar{G}^\bullet = (\bar{V}^\bullet, \bar{E}^\bullet)$ is defined by $\bar{V}^\bullet = V^\bullet$ and $\bar{E}^\bullet = E^\bullet \cup \bigcup_j e^\bullet_j$. The \textbf{augmented dual graph} $\bar{G}^\circ = (\bar{V}^\circ, \bar{E}^\circ)$ is defined by $\bar{V}^\circ = V^\circ \cup \{z\}$ and $\bar{E}^\circ = E^\circ \cup \bigcup_j e^\circ_j$. With these definitions, $\bar{G}^\bullet$ and $\bar{G}^\circ$ are plane duals of each other. We observe that the vertices in the boundary of the outer face of $\bar{G}^\bullet$ are precisely the elements of $\partial V^\bullet$; this was not the case for the un-augmented primal graph $G^\bullet$.

\begin{figure}
\centering
\begin{subfigure}{1\textwidth}
   \centering
   \includegraphics[width=0.7\textwidth]{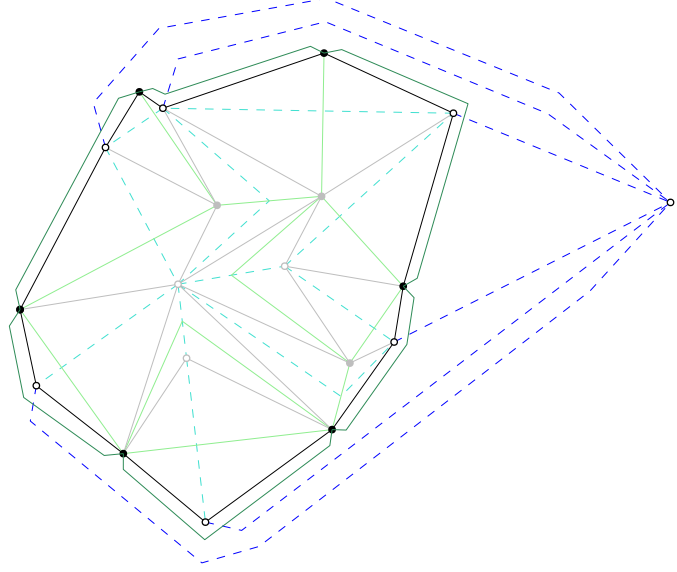}
   \caption{Augmented primal and dual graphs for the orthodiagonal map from Figure \ref{orthomap}. The boundary $\partial G$ is in black. The new primal edges in $\bar{E}^\bullet \setminus E^\bullet$ are in solid green, and the new dual edges in $\bar{E}^\circ \setminus E^\circ$ are in dashed blue. The interior of $G$ is shown with edges of $G$, $G^\bullet$, and $G^\circ$ in light gray, light green, and dashed light blue, respectively.}
   \label{orthomap-aug}
\end{subfigure}

\vspace{\floatsep}

\begin{subfigure}{1\textwidth}
   \centering
   \includegraphics[width=0.2\textwidth]{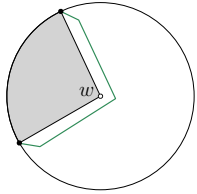}
   \caption{The circle of radius $\e$ centered at a vertex $w \in \partial V^\circ$, with its two primal neighbors along the boundary $\partial G$. The shaded region is $\widehat{G}$. If the new primal edge (in green) is drawn completely inside the circle, as shown, then the face $f^\bullet(w)$ of $\bar{G}^\bullet$ will be contained in the disk of radius $\e$ centered at $w$. Note that the two primal vertices need not be on the circle but might be inside.}
   \label{ortho-aug2}
\end{subfigure}

\caption{The augmented primal and dual graphs.}
\label{augmented}	
\end{figure}

Given $w \in \bar{V}^\circ$, let $f^\bullet(w)$ denote the face of $\bar{G}^\bullet$ that contains $w$. For each $w \in \Int(V^\circ)$, the face $f^\bullet(w)$ is contained in the open disk of radius $\e$ centered at $w$. We would like the same property to hold for each $w_j \in \partial V^\circ$. This can easily be ensured by drawing the new primal edges $e_j^\bullet$ appropriately: see Figure \ref{ortho-aug2}.

We will apply Lemma \ref{rho-path} with $H = \bar{G}^\bullet$ and $H^\dagger = \bar{G}^\circ$. (The letters $R,r,\e,x,y$ keep their meanings which were inherited from the statement of Proposition \ref{equicontinuity}.) Under this choice of $H$, the set $\partial W$ in Lemma \ref{rho-path} is our $\partial V^\bullet$, so the two definitions of $B'$ match. We must check that $\bar{G}^\bullet$ contains a path from $x$ to $\partial V^\bullet$ whose vertices are all in $A$ and a path from $y$ to $\partial V^\bullet$ whose vertices are all in $B$.

Let $A_1$ be the connected component of $A$ in $G^\bullet$ that contains $x$. Assume for contradiction that $A_1 \subseteq \Int(V^\bullet)$, so that $h$ is discrete harmonic on $A_1$. Because $G^\bullet$ is connected, the set $A_2$ of vertices $v \in V^\bullet$ such that $v \notin A_1$ and $v$ is adjacent in $G^\bullet$ to some vertex of $A_1$ is nonempty. The discrete maximum principle, Proposition \ref{max principle}, implies that there is $v \in A_2$ such that $h(v) \leq h(x)$. Then $v \in A$ and $v$ is adjacent to a vertex of $A_1$, so in fact $v \in A_1$, a contradiction. We conclude that $A_1$ contains a vertex in $\partial V^\bullet$, meaning that there is a path in $G^\bullet$ from $x$ to $\partial V^\bullet$ whose vertices are all in $A$. By the same argument, there is also a path in $G^\bullet$ from $y$ to $\partial V^\bullet$ whose vertices are all in $B$.

All the conditions of Lemma \ref{rho-path} have now been met. It follows that for every $\rho \in (r+\e, R-\e)$, there is a path $P_\rho$ in $\bar{G}^\bullet$ from $A$ to $B'$ consisting entirely of $\rho$-edges. We may assume without loss of generality that $P_\rho$ is a simple path whose internal vertices are in neither $A$ nor $B'$. This means that $P_\rho$ cannot include any of the extra edges in $\bar{E}^\bullet \setminus E^\bullet$: if $e$ is a $\rho$-edge in $\bar{E}^\bullet \setminus E^\bullet$, then both of its endpoints are in $\partial V^\bullet \cap D \subseteq B'$, so $e$ cannot be an edge of $P_\rho$. Hence $P_\rho$ is in fact a path in $G^\bullet$. Note that for every edge in $E^\bullet$, its dual edge in $\bar{E}^\circ$ according to the duality between $\bar{G}^\bullet$ and $\bar{G}^\circ$ is the same as its dual edge in $E^\circ$ according to the pre-existing correspondence between $E^\bullet$ and $E^\circ$. Therefore, the two definitions of $\rho$-edge (for the primal graph of an orthodiagonal map and for a general connected plane graph) are equivalent in this setting.
\end{proof}

\section{Proof of Theorem \ref{main_thm}}
\label{main proof}

With the results from Sections \ref{Energy convergence} through \ref{Equicontinuity section} in hand, we are ready to prove Theorem \ref{main_thm}. The assumptions of the theorem are as follows:
\begin{enumerate}[label=(A\arabic*)]
\item The domain $\Omega \subset \R^2$ is bounded and simply connected.
\item The orthodiagonal map $G = (V^\bullet \sqcup V^\circ, E)$ has maximal edge length at most $\e$, and the Hausdorff distance between $\partial G$ and $\partial \Omega$ is at most $\delta$, where both $\e$ and $\delta$ are less than $\diam(\Omega)$.
\item The function $g: \R^2 \to \R$ is $C^2$, with
\[
C_1 = \|\nabla g\|_{\infty, \widetilde{\Omega}}\, , \qquad C_2 = \|Hg\|_{\infty, \widetilde{\Omega}}
\]
where $\widetilde{\Omega} = \conv(\overline{\Omega} \cup \widehat{G})$.
\end{enumerate}
Under these assumptions, one defines functions $h_c,h_d$ such that:
\begin{enumerate}[label=(A\arabic*)]
\setcounter{enumi}{3}
\item The function $h_c: \overline{\Omega} \to \R$ is the solution to the continuous Dirichlet problem on $\Omega$ with boundary data $g$.
\item The function $h_d: V^\bullet \to \R$ is the solution to the discrete Dirichlet problem on $\Int(V^\bullet)$ with boundary data $g|_{\partial V^\bullet}$.
\end{enumerate}
Given (A1)-(A5), Theorem \ref{main_thm} says that for all $x \in V^\bullet \cap \overline{\Omega}$,
\begin{equation} \label{main conclusion}
|h_d(x) - h_c(x)| \leq \frac{C \diam(\Omega) (C_1 + C_2 \e)}{\log^{1/2}(\diam(\Omega) / (\delta \vee \e))}\, .
\end{equation}

The inequality \eqref{main conclusion} is invariant under spatial scaling. If space is scaled by a factor of $\alpha$, then $\diam(\Omega), \delta, \e$ are multiplied by $\alpha$ while $C_1$ is multiplied by $\alpha^{-1}$ and $C_2$ is multiplied by $\alpha^{-2}$. Thus neither the left side nor the right side of \eqref{main conclusion} changes. For this reason, we may assume in the proof that:
\begin{enumerate}[label=(A\arabic*)]
\setcounter{enumi}{5}
\item The diameter of $\Omega$ is $1$.	
\end{enumerate}
In that case, we have $\delta,\e < 1$ and we are proving that
\begin{equation} \label{main conclusion 2}
|h_d(x) - h_c(x)| \leq \frac{C(C_1 + C_2 \e)}{\log^{1/2}(1 / (\delta \vee \e))}
\end{equation}
for all $x \in V^\bullet \cap \overline{\Omega}$.

To prove the theorem, we will convert the $L^2$ bound of Proposition \ref{energy convergence} into an $L^\infty$ bound. Proposition \ref{energy convergence} does not apply directly to the functions $h_c$ and $h_d$ from (A4) and (A5), since the set $\widehat{G}$ may extend outside $\overline{\Omega}$. In addition, we have no control over $\|Hh_c\|_{\infty, \Omega}$, which might in fact be infinite. We will resolve both of these issues by defining a sub-orthodiagonal map $G_1$ of $G$ such that $\widehat{G}_1$ is contained inside $\Omega$ with a buffer separating $\widehat{G}_1$ from $\partial \Omega$. The norm of $Hh_c$ on $\widehat{G}_1$ (and on the slightly larger set $\widetilde{G}_1$ appearing in Proposition \ref{energy convergence}) is then controlled using the interior derivative estimates, Proposition \ref{interior derivative estimates}.

To be precise, given a finite orthodiagonal map $G = (V^\bullet \sqcup V^\circ, E)$, we say that $G_1 = (V_1^\bullet \sqcup V_1^\circ, E_1)$ is a \textbf{sub-orthodiagonal map} of $G$ if $G_1$ is itself an orthodiagonal map and we have the inclusions $V_1^\bullet \subseteq V^\bullet$, $V_1^\circ \subseteq V^\circ$, $E_1 \subseteq E$. In addition, we require that every inner face of $G_1$ is also a face of $G$.

Once we are in position to apply Proposition \ref{energy convergence}, we could show that $h_d$ is uniformly close to $h_c$ using Proposition \ref{equicontinuity} and the Arzel\`{a}-Ascoli theorem, but this would not give an effective bound. The proof of Theorem \ref{main_thm} relies on Propositions \ref{energy convergence} and \ref{equicontinuity} but also uses the resistance bound of Proposition \ref{prop:center-to-outside}. In addition to these main ingredients, there are other estimates which we state as claims and whose proofs we postpone until the end of the subsection.

\begin{proof}[Proof of Theorem \ref{main_thm}]
We are given (A1)-(A5), and by scale-invariance we may also assume (A6). The goal is to prove \eqref{main conclusion 2} for all $x \in V^\bullet \cap \overline{\Omega}$. Set $\gamma = \delta \vee \e$. By (A2) and (A6), we have $\gamma < 1$.

We begin with the observation that since $h_c$ and $h_d$ satisfy approximately the same boundary conditions and the smoothness of both functions is controlled (by Propositions \ref{Holder} and \ref{equicontinuity}, respectively), the difference $|h_c - h_d|$ must be relatively small near $\partial \Omega$. Specifically, we can say the following.

\begin{claim} \label{claim:boundary}
The bound \eqref{main conclusion 2} holds for all $x \in V^\bullet \cap \overline{\Omega}$ such that
\[
\dist(x, \partial \Omega) \leq 2 \gamma^{1/3} \, .
\]
\end{claim}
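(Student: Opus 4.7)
The plan is to bound $|h_d(x) - h_c(x)|$ by telescoping through two intermediate points. Choose $y \in \partial \Omega$ nearest to $x$, so $|x - y| \leq 2\gamma^{1/3}$; then choose $v_0 \in \partial V^\bullet$ nearest to $y$, so $|y - v_0| \leq \delta + \e \leq 2\gamma$ (using that $\partial G$ is within Hausdorff distance $\delta$ of $\partial \Omega$, that edges of $G$ have length at most $\e$, and that each edge has endpoints in $V^\bullet$ and in $V^\circ$). Since $h_c(y) = g(y)$ and $h_d(v_0) = g(v_0)$, the triangle inequality gives
\[
|h_d(x) - h_c(x)| \leq |h_c(x) - h_c(y)| + |g(y) - g(v_0)| + |h_d(v_0) - h_d(x)|.
\]

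I may assume $\gamma$ is smaller than some absolute constant; otherwise $\log^{1/2}(1/\gamma) = O(1)$ and \eqref{main conclusion 2} reduces to $|h_d(x) - h_c(x)| \leq C(C_1 + C_2\e)$, which follows from $|h_c(x)|, |h_d(x)| \leq \max_{\widetilde{\Omega}}|g|$ and the $C_1$-Lipschitz bound on $g$ over the convex set $\widetilde{\Omega}$ (whose diameter is at most $3$). For small $\gamma$, Proposition \ref{Holder} applied to $h_c$ with Lipschitz constant $C_1$ for the boundary data (valid on $\partial \Omega \subset \widetilde{\Omega}$ by convexity) bounds the first term by $CC_1|x-y|^{1/2} \leq CC_1 \gamma^{1/6}$, and the second term is at most $C_1 |y - v_0| \leq 2 C_1 \gamma$. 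Since $\gamma^{1/6}$ and $\gamma$ both decay faster than $1/\log^{1/2}(1/\gamma)$, these two terms are well within the target.

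For the substantive third term, I would invoke the equicontinuity result, Proposition \ref{equicontinuity}. First, the discrete Dirichlet principle (Proposition \ref{projection properties}) gives $\EE^\bullet(h_d) \leq \EE^\bullet(g|_{V^\bullet})$, and Proposition \ref{sum integral convergence} applied to $g$, combined with $\int_{\widehat{G}} |\nabla g|^2 \leq C_1^2 \area(\widehat{G})$ and $\area(\widehat{G}) \leq C$, produces the energy bound $\EE^\bullet(h_d)^{1/2} \leq C(C_1 + C_2\e)$. Setting $r = |v_0 - x|/2 \leq 2\gamma^{1/3}$ and choosing the radius $R := 1/\log^{1/2}(1/\gamma)$, which for $\gamma$ small satisfies $R \geq 2r + 3\e$ (because $\gamma^{1/3}\log^{1/2}(1/\gamma) \to 0$), Proposition \ref{equicontinuity} yields
\[
|h_d(v_0) - h_d(x)| \leq \frac{C\, \EE^\bullet(h_d)^{1/2}}{\log^{1/2}(R/(r+\e))} + \beta,
\]
where $\beta \leq 2 C_1 R = 2 C_1 / \log^{1/2}(1/\gamma)$ since $g$ is $C_1$-Lipschitz. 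A direct computation gives $\log(R/(r+\e)) \geq \tfrac{1}{4}\log(1/\gamma)$, so $|h_d(v_0) - h_d(x)| \leq C(C_1 + C_2\e)/\log^{1/2}(1/\gamma)$, as required.

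The main obstacle is the choice of $R$ in Proposition \ref{equicontinuity}: the energy contribution demands $R$ large compared to $r + \e$, while the boundary-oscillation term $\beta$ (which is at most $2 C_1 R$ because $g$ is merely Lipschitz, not Hölder with a better exponent) demands $R$ small. The balancing choice $R \asymp 1/\log^{1/2}(1/\gamma)$ is dictated by equating the two error contributions, and it is feasible only because the hypothesis $\dist(x, \partial \Omega) \leq 2\gamma^{1/3}$ forces $r \approx \gamma^{1/3}$, which decays polynomially in $\gamma$ and hence much faster than $1/\log^{1/2}(1/\gamma)$; this is precisely where the exponent $1/3$ in the statement enters.
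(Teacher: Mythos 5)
Your proof is correct and takes essentially the same route as the paper's: telescope $h_d(x)-h_c(x)$ through a nearby boundary point of $\Omega$ and a nearby vertex of $\partial V^\bullet$, control the continuous part via Proposition~\ref{Holder}, control the boundary-data discrepancy via the Lipschitz bound, bound $\EE^\bullet(h_d)$ via Propositions~\ref{projection properties} and~\ref{sum integral convergence}, and control the discrete part via Proposition~\ref{equicontinuity}. The only cosmetic difference is the choice of the radius $R$ in Proposition~\ref{equicontinuity}: the paper takes $R=7\gamma^{1/6}$, which satisfies all the needed inequalities for every $\gamma<1$ and makes $\beta$ decay polynomially (comfortably below the target), whereas you take $R=1/\log^{1/2}(1/\gamma)$, which tracks the $\beta$--energy balance exactly but forces you to assume $\gamma$ is below an absolute threshold (and to handle the remaining range by the trivial $L^\infty$ bound, which implicitly requires normalizing $g$ to vanish at a point of $\overline{\Omega}$). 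Both parameter choices yield the stated bound.
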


Assume for the rest of the proof that the set $\{x \in V^\bullet \cap \overline{\Omega} : \dist(x, \partial \Omega) > 2 \gamma^{1/3} \}$ is nonempty. Let $S$ be the set of all inner faces $Q$ of $G$ such that $Q \subset \Omega$ and $\dist(Q, \partial \Omega) \geq \gamma^{1/3}$, and let $G[S]$ be the subgraph of $G$ that is the union of the boundaries of the elements of $S$. The construction of $G[S]$ is shown in Figure \ref{G1-fig}.

\begin{figure}
\centering
\includegraphics[width=0.8\textwidth]{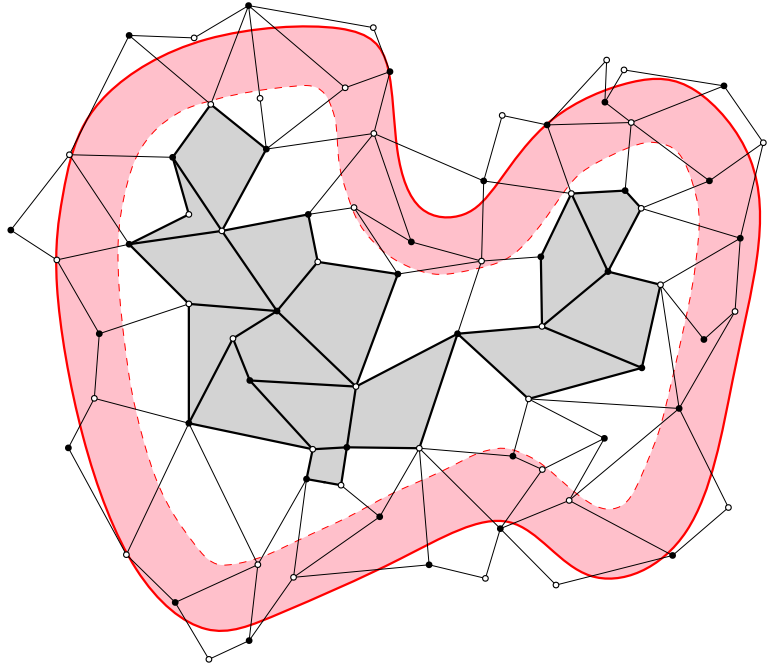}
\caption{Construction of the graph $G[S]$. The solid red curve is $\partial \Omega$ and the red shaded region is the set of points in $\Omega$ whose distance from $\partial \Omega$ is less than $\gamma^{1/3}$. The gray shaded faces are the elements of $S$, and the thick black lines are the edges of $G[S]$. Both blocks of $G[S]$ are sub-orthodiagonal maps of $G$. Warning: For this figure to be completely accurate, the buffer width $\gamma^{1/3}$ would need to be much larger than the maximum edge length $\e$.}
\label{G1-fig}	
\end{figure}

\begin{claim} \label{claim:G[S]}
The set $S$ is nonempty. The inner faces of $G[S]$ are precisely the elements of $S$, and every point $p$ on the boundary of the outer face of $G[S]$ satisfies
\begin{equation} \label{dG1}
\dist(p, \partial \Omega) \leq (9/8) \gamma^{1/3}\, .
\end{equation}

Each $x \in V^\bullet \cap \overline{\Omega}$ with $\dist(x, \partial \Omega) > 2\gamma^{1/3}$ is a vertex of a unique block of $G[S]$. If $H$ is a block of $G[S]$, then the inner faces of $H$ are all elements of $S$ and every point $p$ on the boundary of the outer face of $H$ satisfies \eqref{dG1}.
\end{claim}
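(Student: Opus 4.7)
My plan is to verify the assertions in turn, exploiting the buffer width $\gamma^{1/3}$ against the maximal edge length $\e \leq \gamma \leq \gamma^{1/3}$. I will freely assume $\gamma$ is small enough that $2\e \leq (1/8)\gamma^{1/3}$ (e.g., $\gamma \leq 1/64$); for larger $\gamma$ the conclusion of Theorem \ref{main_thm} is trivial after enlarging the universal constant. In particular one has $\delta \leq \gamma < \gamma^{1/3}$ strictly.

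First, for nonemptiness of $S$ and the uniqueness of the block containing $x$, I start with the hypothesized $x \in V^\bullet \cap \overline{\Omega}$ satisfying $\dist(x,\partial\Omega) > 2\gamma^{1/3}$. Since $\partial G$ is within Hausdorff distance $\delta \leq \gamma^{1/3}$ of $\partial\Omega$, I obtain $\dist(x,\partial G) \geq \gamma^{1/3}$, so $x \in \Int(V^\bullet)$. Every inner face $Q$ of $G$ incident to $x$ has diameter at most $2\e$; the segment from $x$ to any point of $Q$ cannot cross $\partial\Omega$, so $Q \subset \Omega$ and $\dist(Q,\partial\Omega) \geq 2\gamma^{1/3} - 2\e \geq \gamma^{1/3}$, placing $Q$ in $S$. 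Thus $S \neq \emptyset$, and moreover every face of $G$ around $x$ is in $S$. Consequently every edge of $G$ incident to $x$ lies in $G[S]$, and the remaining edges of those faces assemble into a cycle through the neighbors of $x$ in $G[S] \setminus \{x\}$; hence $x$ is not a cut vertex of $G[S]$ and lies in a unique block.

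Next, for the identification of inner faces of $G[S]$ with $S$: each $Q \in S$ has $\partial Q \subset G[S]$, so $\Int(Q)$ is a bounded component of $\R^2 \setminus G[S]$. To rule out ``hole''-type components, I argue that the set of non-$S$ inner faces is connected through face-adjacency to $\partial G$ and thus to the unbounded exterior $\R^2 \setminus \widehat G$. The key inputs are that (i) every inner face $F$ touching $\partial G$ lies within $\delta + 2\e < \gamma^{1/3}$ of $\partial\Omega$, hence $F \notin S$, and (ii) for any non-$S$ face $Q'$, either $Q' \not\subset \Omega$ (so $Q'$ reaches into the thin strip $\widehat G \setminus \Omega$ and meets a face touching $\partial G$) or $\dist(Q',\partial\Omega) < \gamma^{1/3}$, in which case I propagate face-by-face toward the nearest point of $\partial\Omega$: each crossed face sits within $\gamma^{1/3}+2\e$ of $\partial\Omega$ and (with the constant worked out carefully) remains outside $S$ until we reach $\partial G$. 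The $(9/8)\gamma^{1/3}$ bound on the outer face boundary of $G[S]$ is then immediate: if $p$ is on such an edge $e$, the other side of $e$ in $G$ is either the outer face of $G$ (so $e \subset \partial G$ is within $\delta$ of $\partial\Omega$) or a non-$S$ face $Q'$ containing a point within $\gamma^{1/3}$ of $\partial\Omega$ and of diameter $\leq 2\e$, yielding $\dist(p,\partial\Omega) \leq \gamma^{1/3} + 2\e \leq (9/8)\gamma^{1/3}$.

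The block-level statements follow cleanly. Lemma \ref{block faces} applied to $G[S]$ (whose inner faces are simple quadrilaterals by the preceding paragraph) shows that the inner faces of any block $H$ are inner faces of $G[S]$, hence elements of $S$. For the outer face boundary of $H$, the decisive observation is that if $e$ is such a boundary edge then the other side of $e$ in $G[S]$ must be the outer face of $G[S]$. Otherwise, the other side would be an inner face $Q' \in S$ of $G[S]$, but $\partial Q'$ is a simple cycle, hence $2$-connected and contained in a single block; since $e \in \partial Q' \cap H$, this block is $H$, making $Q'$ an inner face of $H$ and contradicting the placement of $e$ on the outer face boundary of $H$. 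Thus $e$ borders the outer face of $G[S]$ and the previous distance estimate applies. The main obstacle is the no-holes argument, where one must carefully quantify the propagation of non-$S$ faces to $\partial G$; the block-outer-face observation, by contrast, is a clean consequence of the fact that simple face-cycles cannot split across two blocks.
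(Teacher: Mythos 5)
Your overall structure --- nonemptiness of $S$ via the faces around $x$, identification of the inner faces of $G[S]$ with $S$, the $(9/8)\gamma^{1/3}$ bound on the outer-face boundary of $G[S]$, and then the reduction of the block-level statements to the $G[S]$-level ones --- matches the paper. The last reduction (observing that an edge on the outer-face boundary of a block $H$ must also border the outer face of $G[S]$, because otherwise both sides would be inner faces of $H$ since $\partial Q'$ is a simple cycle contained in one block) is a clean and valid variation on the paper's argument that such an edge $e$ borders two $S$-faces lying in the same block.

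The genuine gap is in the ``no-holes'' step (that the inner faces of $G[S]$ are exactly $S$), and you flag it yourself. Two concrete issues. First, your stated bound is too weak: if $F$ is a face along the segment from $q_0 \in Q'$ to the nearest boundary point $r \in \partial\Omega$, then $F$ contains a point $p$ of that segment with $\dist(p,\partial\Omega) \leq |q_0 - r| < \gamma^{1/3}$, so $\dist(F,\partial\Omega) < \gamma^{1/3}$ and $F \notin S$ already; the $\gamma^{1/3}+2\e$ you write does \emph{not} exclude $F$ from $S$, since a face at that distance could still have $\dist(F,\partial\Omega) \geq \gamma^{1/3}$. Second, the propagation need not terminate at $\partial G$: the segment ends at $r \in \partial\Omega$, which may lie strictly inside $\widehat{G}$, and your write-up does not say how to continue past $r$; similarly, in the case $Q' \not\subset \Omega$ you merely assert that $Q'$ ``meets a face touching $\partial G$.'' The paper avoids both issues with one observation: every vertex and edge of $G[S]$ lies at distance $\geq \gamma^{1/3}$ from $\partial\Omega$, while every point of the segment $q_0 r$ is at distance $< \gamma^{1/3}$; hence the segment is disjoint from $G[S]$, so $q_0$ and $r$ lie in the same face $f$ of $G[S]$. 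Since $r \in \partial\Omega \subset \Omega^c$, since $\Omega^c$ is connected (as $\Omega$ is bounded and simply connected) and also disjoint from $G[S]$, all of $\Omega^c$ lies in $f$; as $\Omega^c$ is unbounded, $f$ is the outer face of $G[S]$. The case $Q' \not\subset \Omega$ is handled directly by this argument (no segment needed) since $Q'$ already meets $\Omega^c$. You should replace the face-by-face propagation with this connectedness argument, which is both shorter and actually closes the two gaps above.
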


A consequence of Claim \ref{claim:G[S]} is that each block $H$ of $G[S]$ is a sub-orthodiagonal map of $G$. Indeed, since $H$ is $2$-connected, the boundary of its outer face is a simple cycle \cite[Proposition 4.2.5]{D17} and the other required properties are all stated in the claim. Hence, even though $G[S]$ may violate the ``simple boundary'' condition of orthodiagonal maps, we can apply the results from Sections \ref{Energy convergence} through \ref{Equicontinuity section} to each block separately.

From this point forward we fix a particular $x \in V^\bullet \cap \overline{\Omega}$ with $\dist(x, \partial \Omega) > 2 \gamma^{1/3}$ and let $G_1$ be the block of $G[S]$ containing $x$. We write its vertex set as $V_1^\bullet \sqcup V_1^\circ$ (inheriting the bipartition from $G$) and its edge set as $E_1$, so that $G_1 = (V_1^\bullet \sqcup V_1^\circ, E_1)$ is a sub-orthodiagonal map of $G$ with $x \in V_1^\bullet$. We define the notations $\widehat{G}_1$, $\partial G_1$, $\partial V_1^\bullet$, $\Int(V_1^\bullet)$, etc.\ for $G_1$ analogously as for $G$. The primal network associated with $G_1$ is $(G_1^\bullet, c)$, with energy functional $\EE_1^\bullet$. We observe that for every vertex in $\Int(V_1^\bullet)$, its immediate neighborhood in $G_1^\bullet$ is identical to its immediate neighborhood in $G^\bullet$, and the edge weights are equal in both networks (justifying the reuse of the letter $c$).

Since $\widehat{G}_1$ is the union of closures of faces in $S$, we have that $\widehat{G}_1 \subset \Omega$ and $\dist(\widehat{G}_1, \partial \Omega) \geq \gamma^{1/3}$. The buffer between $\widehat{G}_1$ and $\partial \Omega$ lets us use the interior derivative estimates, Proposition \ref{interior derivative estimates}, to deduce the following bounds.

\begin{claim} \label{claim:deriv-Hessian}
Let $\widetilde{G}_1$ denote the union of the convex hulls of the closures of the inner faces of $G_1$. Then
\begin{align}
\|\nabla h_c\|_{\infty,\widehat{G}_1} &\leq CC_1 \gamma^{-1/3}\, , \label{deriv-bound} \\
\|Hh_c\|_{\infty,\widetilde{G}_1} &\leq CC_1 \gamma^{-2/3}\, . \label{Hessian-bound}
\end{align}
\end{claim}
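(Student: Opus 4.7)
The plan is to derive both bounds by applying the interior derivative estimates (Proposition \ref{interior derivative estimates}) to $h_c$ on $\Omega$. This requires two preparatory reductions: first, a uniform $L^\infty$ bound on $h_c$ (up to an irrelevant additive constant); and second, a lower bound on the distance from points of $\widehat{G}_1$ or $\widetilde{G}_1$ to $\partial\Omega$.

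For the sup norm, the observation is that any additive constant shift of the boundary data $g$ shifts $h_c$ by the same constant and leaves $\nabla h_c$ and $Hh_c$ unchanged. So I would fix any basepoint $x_0 \in \overline{\Omega}$ and work with $\tilde h_c := h_c - g(x_0)$, which is harmonic on $\Omega$ with boundary values $g - g(x_0)$. By assumption (A3), $g$ is $C_1$-Lipschitz on $\widetilde{\Omega}$, and since $\widetilde{\Omega} \supseteq \overline{\Omega}$ with $\diam(\widetilde{\Omega}) \leq \diam(\Omega) + 2\delta \leq 3$ by (A6) and $\delta < 1$, we get $\sup_{\partial\Omega}|g - g(x_0)| \leq 3 C_1$. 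The continuous maximum principle (Proposition \ref{continuous max principle}) then gives $\sup_{\Omega}|\tilde h_c| \leq 3 C_1$.

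For \eqref{deriv-bound}, each $x \in \widehat{G}_1$ lies in the closure of some face $Q \in S$, so by definition of $S$, $\dist(x,\partial\Omega) \geq \gamma^{1/3}$. Proposition \ref{interior derivative estimates} applied to $\tilde h_c$ at $x$ gives
\[
|\nabla h_c(x)| = |\nabla \tilde h_c(x)| \leq \frac{2}{\gamma^{1/3}} \cdot 3 C_1 \leq C C_1 \gamma^{-1/3}.
\]
For \eqref{Hessian-bound} we need the same kind of estimate at points of the possibly larger set $\widetilde{G}_1$. The triangle inequality on the four sides of each inner quadrilateral face $Q$ of $G_1$ shows $\diam(\conv(\overline{Q})) \leq 2\e$, so every $y \in \widetilde{G}_1$ lies within $2\e$ of $\widehat{G}_1$, hence $\dist(y,\partial\Omega) \geq \gamma^{1/3} - 2\e$. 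Since by hypothesis there exists $x \in V^\bullet \cap \overline{\Omega}$ with $\dist(x,\partial\Omega) > 2\gamma^{1/3}$ and $\diam(\Omega) = 1$, the parameter $\gamma$ must be small enough (e.g.\ $\gamma \leq 1/64$, absorbing larger values into the constant $C$) to ensure $2\e \leq 2\gamma \leq \tfrac{1}{2}\gamma^{1/3}$. Then $\dist(y,\partial\Omega) \geq \tfrac{1}{2}\gamma^{1/3}$, and in particular $\widetilde{G}_1 \subset \Omega$, so Proposition \ref{interior derivative estimates} yields $|D_{ij}h_c(y)| \leq C C_1 \gamma^{-2/3}$ for $i,j \in \{1,2\}$. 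Since $Hh_c$ is a symmetric $2\times 2$ matrix, $\|Hh_c(y)\|_2$ is comparable to $\max_{i,j}|D_{ij}h_c(y)|$, establishing \eqref{Hessian-bound}.

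The only real obstacle is the $\widetilde{G}_1$-versus-$\widehat{G}_1$ issue in the Hessian bound, and it dissolves because of the favorable scaling in Claim \ref{claim:G[S]}: the buffer $\gamma^{1/3}$ dominates the face-diameter scale $2\e \leq 2\gamma$ as $\gamma \to 0$, precisely the gain engineered by choosing the exponent $1/3$ when defining $S$.
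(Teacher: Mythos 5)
Your proof is correct and takes essentially the same route as the paper: normalize $g$ by an additive constant to get a uniform bound $\sup_\Omega |h_c| \lesssim C_1$ via the continuous maximum principle, use the buffer $\dist(\widehat{G}_1,\partial\Omega)\geq\gamma^{1/3}$ (and the bound $\gamma\leq 1/64$, which, as you note, is forced by $\diam(\Omega)=1$ together with the existence of a vertex at distance $>2\gamma^{1/3}$ from $\partial\Omega$, so $\widetilde{G}_1$ still sits at distance $\gtrsim\gamma^{1/3}$ from $\partial\Omega$), and then invoke Proposition \ref{interior derivative estimates}. The only cosmetic differences are that the paper obtains the sharper $\dist(q,\partial\Omega)\geq\gamma^{1/3}-\e$ for $q\in\widetilde{G}_1$ (every point of $\conv(\overline{Q})$ is within $\e$ of a vertex of $Q$) where you use $2\e$ via the diameter bound, and that the paper passes from entrywise Hessian bounds to the operator norm via the trace-zero/determinant identity for harmonic $h_c$ rather than your general $2\times 2$ norm comparison; both are immaterial to the final constant.
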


Let $h_d^{(1)}: V_1^\bullet \to \R$ be the solution to the discrete Dirichlet problem on $\Int(V_1^\bullet)$ with boundary data $h_c|_{\partial V_1^\bullet}$. We write
\begin{equation} \label{triangle}
|h_d(x) - h_c(x)| \leq |h_d(x) - h_d^{(1)}(x)| + |h_d^{(1)}(x) - h_c(x)|\, .
\end{equation}
The first term on the right hand side is easily bounded using the discrete maximum principle and Claim \ref{claim:boundary}.

\begin{claim} \label{claim:hd-hd1}
We have
\[
|h_d(x) - h_d^{(1)}(x)| \leq \frac{C(C_1 + C_2 \e)}{\log^{1/2}(1/\gamma)}\, .
\]
\end{claim}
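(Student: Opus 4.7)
The plan is to observe that $h_d - h_d^{(1)}$, restricted to $V_1^\bullet$, is discrete harmonic on $\Int(V_1^\bullet)$, so by the discrete maximum principle (Proposition \ref{max principle}) its supremum over $V_1^\bullet$ is attained on the boundary $\partial V_1^\bullet$. Then I would bound $|h_d - h_d^{(1)}|$ on $\partial V_1^\bullet$ by invoking Claim \ref{claim:boundary}.

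First, I need to justify that $h_d$ restricted to $V_1^\bullet$ is discrete harmonic on $\Int(V_1^\bullet)$ in the network $(G_1^\bullet, c)$. This is exactly the observation already recorded in the excerpt: for every $v \in \Int(V_1^\bullet)$, the immediate neighborhood of $v$ in $G_1^\bullet$ coincides with its immediate neighborhood in $G^\bullet$, with the same edge weights. Since $\Int(V_1^\bullet) \subset \Int(V^\bullet)$, and $h_d$ is discrete harmonic on $\Int(V^\bullet)$ in $(G^\bullet, c)$, the local harmonicity condition \eqref{def:discreteharmonicortho} for $h_d$ holds verbatim at every $v \in \Int(V_1^\bullet)$. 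The function $h_d^{(1)}$ is discrete harmonic on $\Int(V_1^\bullet)$ by definition, so the difference is too.

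Next, by Proposition \ref{max principle} applied to $\pm(h_d - h_d^{(1)})$ on the connected network $(G_1^\bullet, c)$ (connectedness by Lemma \ref{lem:connected}), we have
\[
|h_d(x) - h_d^{(1)}(x)| \;\leq\; \max_{v \in \partial V_1^\bullet} |h_d(v) - h_d^{(1)}(v)| \;=\; \max_{v \in \partial V_1^\bullet} |h_d(v) - h_c(v)|,
\]
where the last equality uses that $h_d^{(1)} = h_c$ on $\partial V_1^\bullet$ by construction.

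Finally, I need to show Claim \ref{claim:boundary} applies to each $v \in \partial V_1^\bullet$. Since $V_1^\bullet \subseteq V^\bullet$ and $\widehat{G}_1 \subset \Omega$, we have $v \in V^\bullet \cap \overline{\Omega}$. By Claim \ref{claim:G[S]}, every point on $\partial G_1$ is within distance $(9/8)\gamma^{1/3} < 2\gamma^{1/3}$ of $\partial \Omega$, and $v \in \partial V_1^\bullet \subset \partial G_1$. Thus Claim \ref{claim:boundary} yields the desired bound at $v$, and the conclusion follows immediately. The only (very minor) obstacle here is verifying the two inclusions $V_1^\bullet \subset \overline{\Omega}$ and $\partial V_1^\bullet \subset \{p : \dist(p, \partial\Omega) \leq 2\gamma^{1/3}\}$, both of which are direct consequences of the construction of $G_1$ as a block of $G[S]$ recorded in Claim \ref{claim:G[S]}.
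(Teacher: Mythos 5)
Your proposal is correct and follows exactly the paper's own argument: restrict to $(G_1^\bullet,c)$, apply the discrete maximum principle to $h_d - h_d^{(1)}$ over $\Int(V_1^\bullet)$, use that $h_d^{(1)} = h_c$ on $\partial V_1^\bullet$, and then bound the boundary values via Claim~\ref{claim:G[S]} combined with Claim~\ref{claim:boundary}. No differences of substance.
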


To deal with the second term on the right side of \eqref{triangle}, we let $f = h_d^{(1)} - h_c$. Thus we would like to bound $|f(x)|$ and we know that $f = 0$ on $\partial V_1^\bullet$. Let $B$ be the closed disk of radius $3\e$ centered at $x$, and let $A = B \cap V_1^\bullet$.

\begin{claim} \label{claim:A-smooth}
The set $B$ is a subset of $\Int(G_1)$. As well, every $y \in A$ satisfies
\begin{equation} \label{A-smoothness-formula}
|f(x) - f(y)| \leq \frac{CC_1}{\log^{1/2}(1/\e)}\, .
\end{equation}
\end{claim}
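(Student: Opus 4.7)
The containment $B \subset \Int(G_1)$ follows from Claim \ref{claim:G[S]}: every point of $\partial G_1$ lies within distance $(9/8)\gamma^{1/3}$ of $\partial \Omega$, so combined with $\dist(x, \partial \Omega) > 2\gamma^{1/3}$ the triangle inequality gives $\dist(x, \partial G_1) > (7/8)\gamma^{1/3}$. Since $\gamma \geq \e$, this exceeds $3\e$ whenever $\e$ is sufficiently small; the remaining bounded range of $\e$ can be absorbed into the constant $C$, since the discrete and continuous maximum principles force $|f(x) - f(y)|$ to be bounded by twice the oscillation of $h_c$ on $\overline{\Omega}$, which is at most $C_1$.

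To bound $|f(x) - f(y)|$ I would split by the triangle inequality into $|h_d^{(1)}(x) - h_d^{(1)}(y)| + |h_c(x) - h_c(y)|$. For the continuous term the segment $xy$ lies in $B \subset \widehat{G}_1$, so Claim \ref{claim:deriv-Hessian} gives $|h_c(x) - h_c(y)| \leq CC_1 \gamma^{-1/3} \cdot 3\e \leq CC_1 \e^{2/3}$ using $\gamma \geq \e$. Since $\e^{2/3}\log^{1/2}(1/\e)$ is bounded on $(0,1)$, this yields $|h_c(x) - h_c(y)| \leq CC_1/\log^{1/2}(1/\e)$.

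For the discrete term I would apply Proposition \ref{equicontinuity} to $h_d^{(1)}$ on $G_1$, which requires an energy bound. By the discrete Dirichlet principle (Proposition \ref{projection properties}), $\EE_1^\bullet(h_d^{(1)}) \leq \EE_1^\bullet(h_c|_{V_1^\bullet})$, and Proposition \ref{sum integral convergence} bounds the latter by $2\int_{\widehat{G}_1}|\nabla h_c|^2$ plus an error term that is $O(C_1^2)$ after substituting $L \leq CC_1\gamma^{-1/3}$, $M \leq CC_1\gamma^{-2/3}$ from Claim \ref{claim:deriv-Hessian} and using $\e \leq \gamma$. The crucial step is to control $\int_{\widehat{G}_1}|\nabla h_c|^2$ via the \emph{continuous} Dirichlet principle (Proposition \ref{energy minimization}) rather than integrating a pointwise gradient bound: this gives $\int_\Omega|\nabla h_c|^2 \leq \int_\Omega|\nabla g|^2 \leq C_1^2\,\area(\Omega) \leq C_1^2$, so $\EE_1^\bullet(h_d^{(1)}) \leq CC_1^2$. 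Applying Proposition \ref{equicontinuity} with $r = |x-y|/2 \leq 3\e/2$ and $R = (7/8)\gamma^{1/3} - 3\e/2$, the disk of radius $R$ about $(x+y)/2$ lies in $\Int(G_1)$ (so $\beta = 0$), and since $R/(r+\e) \geq c\gamma^{1/3}/\e \geq c\e^{-2/3}$ one has $\log[R/(r+\e)] \geq c\log(1/\e)$; combined with the energy bound this gives $|h_d^{(1)}(x) - h_d^{(1)}(y)| \leq CC_1/\log^{1/2}(1/\e)$, as required.

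The main obstacle is the energy bound itself: naively substituting the interior gradient bound $|\nabla h_c| \leq CC_1\gamma^{-1/3}$ into $\int_{\widehat{G}_1}|\nabla h_c|^2$ would produce a factor $\gamma^{-2/3}$ that would swamp the $\log^{1/2}(1/\e)$ gain from Proposition \ref{equicontinuity}. Replacing this naive estimate by the continuous Dirichlet principle, which transfers the energy bound onto the prescribed data $g$ where the gradient is uniformly controlled by $C_1$, is what makes the proof close.
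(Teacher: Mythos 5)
Your proof is correct in substance and follows essentially the same route as the paper: split $f = h_d^{(1)} - h_c$ via the triangle inequality, bound the continuous term by the interior gradient estimate, and bound the discrete term by Proposition~\ref{equicontinuity} after obtaining the energy bound $\EE_1^\bullet(h_d^{(1)}) \leq CC_1^2$ from the discrete and then continuous Dirichlet principles. You correctly identify the crucial point, which is that the energy must be transferred to $g$ via Proposition~\ref{energy minimization}; integrating the pointwise gradient bound alone would give an extra $\gamma^{-2/3}$ that kills the logarithmic gain. This matches the paper's proof almost line by line (the paper takes $R = (5/8)\gamma^{1/3}$, you take $R = (7/8)\gamma^{1/3} - 3\e/2$; both are fine).

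One small logical gap in your opening paragraph: you argue that $(7/8)\gamma^{1/3} > 3\e$ ``whenever $\e$ is sufficiently small'' and then propose to ``absorb the remaining bounded range of $\e$ into the constant.'' That trick works for the inequality \eqref{A-smoothness-formula}, but it cannot rescue the \emph{set inclusion} $B \subset \Int(G_1)$, which is either true or false and has no constant to absorb into. In fact there is no ``remaining range'' to worry about: the standing assumption that some $x \in V^\bullet \cap \overline{\Omega}$ satisfies $\dist(x,\partial\Omega) > 2\gamma^{1/3}$, together with $\diam(\Omega) = 1$, forces $4\gamma^{1/3} \leq 1$, i.e.\ $\gamma \leq 1/64$ (this is \eqref{64-bound} in the paper). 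Hence $14\e \leq 14\gamma \leq (7/8)\gamma^{1/3}$ always holds (that is \eqref{16-bound}), which gives the inclusion unconditionally. You should cite that bound rather than appealing to smallness of $\e$.
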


We now invoke Propositions \ref{energy convergence} and \ref{prop:center-to-outside}. Proposition \ref{energy convergence} and \eqref{Hessian-bound} imply that
\[
\EE_1^\bullet(f) \leq 32 \area(\widehat{G}_1) \|Hh_c\|_{\infty, \widetilde{G}_1}^2 \e^2 \leq CC_1^2 \e^{2/3}\, .
\]
Proposition \ref{prop:center-to-outside} shows that there is a unit flow $\theta$ in $G_1^\bullet$ from $A$ to $\partial V_1^\bullet$ such that
\[
\EE_1^\bullet(\theta) \leq C \log(1/\e)\, .
\]
We apply Proposition \ref{Dirichlet Thomson} using $f$ and $-\theta$, which is a unit flow from $\partial V_1^\bullet$ to $A$. This yields
\[
1 \cdot \gap_{\partial V_1^\bullet, A}(f) \leq \EE_1^\bullet(-\theta)^{1/2} \EE_1^\bullet(f)^{1/2} \leq CC_1 \e^{1/3} \log^{1/2}(1/\e)\, .
\]
(Proposition \ref{Dirichlet Thomson} only applies when the gap is nonnegative, but if the gap is negative then the inequality is trivially true.) Since $f = 0$ on $\partial V_1^\bullet$, there is $y \in A$ such that
\[
f(y) \leq CC_1 \e^{1/3} \log^{1/2}(1/\e)\, .
\]
Write $f(x) = f(y) + [f(x) - f(y)]$. By the above and \eqref{A-smoothness-formula},
\[
f(x) \leq CC_1 \e^{1/3} \log^{1/2}(1/\e) + \frac{CC_1}{\log^{1/2}(1/\e)} \leq \frac{CC_1}{\log^{1/2}(1/\e)}\, .
\]
We can make the same argument with $-f$ in place of $f$. Combining the two bounds,
\begin{equation} \label{eq:f-bound}
|f(x)| \leq \frac{CC_1}{\log^{1/2}(1/\e)}\, .
\end{equation}
Looking at the right side of \eqref{triangle}, we bound the first term using Claim \ref{claim:hd-hd1} and the second term using \eqref{eq:f-bound}. This completes the proof.
\end{proof}

It remains to prove Claims \ref{claim:boundary} through \ref{claim:A-smooth}.

\begin{proof}[Proof of Claim \ref{claim:boundary}]
Choose $u \in \partial\Omega$ such that $|x-u| \leq 2\gamma^{1/3}$. There is $v \in \partial G$ such that $|u-v| \leq \delta$, and there is $w \in \partial V^\bullet$ such that $|v-w| \leq \e$. See Figure \ref{xuvw}.

\begin{figure}
\centering
\includegraphics[width=0.4\textwidth]{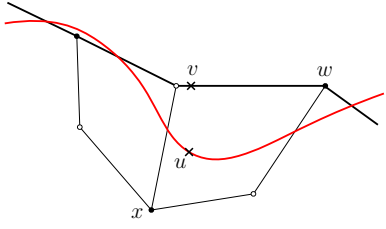}
\caption{Placement of the points $u,v,w$ relative to $x$. The red curve is $\partial \Omega$ and the thick black segments are part of $\partial G$.}
\label{xuvw}
\end{figure}

We have $|x-w| \leq 2\gamma^{1/3} + 2\gamma$. Since $h_d(w) = g(w)$ and $h_c(u) = g(u)$,
\begin{equation} \label{triangle-2}
|h_d(x) - h_c(x)| \leq |h_d(x) - h_d(w)| + |g(w) - g(u)| + |h_c(u) - h_c(x)|\, .
\end{equation}
The function $g$ has Lipschitz constant at most $C_1$ on the convex set $\widetilde{\Omega}$. Therefore,
\begin{equation} \label{g-bound}
|g(w) - g(u)| \leq C_1 |w-u| \leq 2C_1 \gamma\, .
\end{equation}
We can also apply Proposition \ref{Holder} to obtain
\begin{equation} \label{h_c-bound}
|h_c(u) - h_c(x)| \leq CC_1 |u-x|^{1/2} \leq CC_1 \gamma^{1/6}\, .
\end{equation}
To bound $|h_d(x) - h_d(w)|$, we use Proposition \ref{equicontinuity} with $r = \frac{1}{2}|x-w|$ and $R = 7\gamma^{1/6}$. Using that $\gamma < 1$, we have
\[
2r + 3\e \leq 2\gamma^{1/3} + 5\gamma \leq 7\gamma^{1/6} = R
\]
and
\[
\frac{R}{r+\e} \geq \frac{7\gamma^{1/6}}{\gamma^{1/3} + 2\gamma} \geq \frac{7\gamma^{1/6}}{3\gamma^{1/3}} \geq \gamma^{-1/6}\, .
\]
Because $h_d = g$ on $\partial V^\bullet$, the term $\beta$ from Proposition \ref{equicontinuity} satisfies
\[
\beta \leq 2RC_1 = 14C_1 \gamma^{1/6}\, .
\]
As well, Propositions \ref{projection properties} and \ref{sum integral convergence} imply that
\[
\begin{split}
\EE^\bullet(h_d) \leq \EE^\bullet(g) &\leq 2 \int_{\widehat{G}} |\nabla g|^2 + 2\area(\widehat{G})(10C_1 C_2 \e + 8C_2^2 \e^2) \\
&\leq 2\area(\widehat{G})(C_1^2 + 10C_1 C_2 \e + 8C_2^2 \e^2) \\
&\leq C (C_1 + C_2\e)^2\, .
\end{split}
\]
In the last inequality, we used that $\area(\widehat{G}) \leq C$, which is true since $\diam(\Omega) = 1$ and $\partial G$ is within Hausdorff distance $\delta < 1$ of $\partial \Omega$. Proposition \ref{equicontinuity} now gives
\begin{equation} \label{h_d-bound}
|h_d(x) - h_d(w)| \leq \frac{C(C_1 + C_2 \e)}{\log^{1/2}(1/\gamma)} + 14C_1 \gamma^{1/6}\, .
\end{equation}
Comparing the sizes of the bounds in \eqref{g-bound}, \eqref{h_c-bound}, and \eqref{h_d-bound}, we have
\[
\gamma \leq \gamma^{1/6} \leq \frac{C}{\log^{1/2}(1/\gamma)}\, .
\]
Therefore, we may sum the bounds to conclude by \eqref{triangle-2} that
\[
|h_d(x) - h_c(x)| \leq \frac{C(C_1 + C_2 \e)}{\log^{1/2}(1/\gamma)}\, . \qedhere
\]
\end{proof}

In the remaining claims, we know that there is $x \in V^\bullet \cap \overline{\Omega}$ satisfying
\[
\dist(x, \partial \Omega) > 2 \gamma^{1/3}\, .
\]
We have $x \in \Int(V^\bullet)$, since every vertex in $\partial V^\bullet$ is within distance $\delta \leq \gamma^{1/3}$ of $\partial \Omega$. As well, the disk of radius $2\gamma^{1/3}$ centered at $x$ is entirely within $\Omega$, so $1 = \diam(\Omega) \geq 4\gamma^{1/3}$ and thus
\begin{equation} \label{64-bound}
\gamma \leq \frac{1}{64}\, .
\end{equation}
We will often use the equivalent statement that
\begin{equation} \label{16-bound}
\gamma \leq \frac{1}{16}\gamma^{1/3}\, .
\end{equation}

\begin{proof}[Proof of Claim \ref{claim:G[S]}]
Suppose that $x \in V^\bullet \cap \overline{\Omega}$ with $\dist(x, \partial \Omega) > 2 \gamma^{1/3}$. Every face $Q$ of $G$ that is incident to $x$ satisfies
\[
\dist(Q, \partial \Omega) > 2\gamma^{1/3} - 2\e \geq \frac{15}{8} \gamma^{1/3}\, ,
\]
using \eqref{16-bound} in the last inequality, so $Q$ is an element of $S$. The boundaries of all the faces $Q$ incident to $x$ are contained in the same block of $G[S]$, which is the unique block that has $x$ as a vertex.

We now show that the inner faces of $G[S]$ are precisely the elements of $S$. Each element of $S$ is a face of $G[S]$. Conversely, let $f$ be a face of $G[S]$. We will prove that either $f \in S$ or $f = \fout$, the outer face of $G[S]$. We know that $f$ contains at least one face of $G$. If $f$ contains the outer face of $G$, then $f = \fout$. If $f$ contains an inner face $Q \in S$, then $f = Q$. If $f$ contains an inner face $Q \notin S$, then either $Q \not\subset \Omega$ or $Q$ contains a point $q$ with $\dist(q, \partial \Omega) < \gamma^{1/3}$. In the latter case, let $r \in \partial \Omega$ satisfy $|q-r| = \dist(q, \partial \Omega)$. The straight line segment $qr$ cannot intersect any vertex or edge of $G[S]$ because all the points on the segment are too close to $\partial \Omega$. Since $q \in f$, we also have $r \in f$. It follows that $f$ must have nonempty intersection with $\Omega^c = \R^2 \setminus \Omega$, either because $Q$ already intersects $\Omega^c$ or because $r \in f \cap \Omega^c$. The set $\Omega^c$ does not intersect any vertex or edge of $G[S]$, and it is a connected set because $\Omega$ is bounded and simply connected \cite[Section VIII.8]{G01}. Hence a single face of $G[S]$, which must be $\fout$, contains all of $\Omega^c$. We have shown that $f \cap \fout \neq \varnothing$, so $f = \fout$. In conclusion, $G[S]$ has no inner faces besides the elements of $S$. If $H$ is a block of $G[S]$, then Lemma \ref{block faces} implies that all of the inner faces of $H$ are elements of $S$ as well.

Finally, let $e$ be an edge of $G[S]$ that contains a point $p$ with
\[
\dist(p, \partial \Omega) > (9/8) \gamma^{1/3} \, .
\]
We will show that $e$ cannot be an edge of $\fout$, nor can it be an edge of the outer face of a block of $G[S]$. We know that $e$ borders a face $Q \in S$. Since $(9/8) \gamma^{1/3} > \delta$, the point $p$ cannot be part of $\partial G$. Thus $e$ borders two inner faces of $G$, namely $Q$ and another face $Q'$. By \eqref{16-bound},
\[
\dist(Q', \partial \Omega) > \frac{9}{8}\gamma^{1/3} - 2\e \geq \gamma^{1/3}\, .
\]
It follows that $Q' \in S$, so $e$ is not an edge of $\fout$. In addition, both $Q$ and $Q'$ are faces of the same block of $G[S]$, meaning that $e$ cannot be an edge of the outer face of that block (nor any other).

By the argument above, any point $p$ which is on the boundary of $\fout$ or on the boundary of the outer face of a block of $G[S]$ must satisfy
\[
\dist(p, \partial \Omega) \leq (9/8) \gamma^{1/3}\, . \qedhere
\]
\end{proof}

\begin{proof}[Proof of Claim \ref{claim:deriv-Hessian}]
To start, we may replace $g$ with $g-c$ for any constant $c$ without affecting $\nabla h_c$ or $Hh_c$. We choose $c$ so that $g$ vanishes at some point in $\Omega$. Since $\diam(\Omega) = 1$ and the Lipschitz constant of $g$ on the convex set $\widetilde{\Omega}$ is at most $C_1$,
\begin{equation} \label{sup-hc}
\sup_{y \in \Omega} |h_c(y)| \leq \sup_{z \in \partial \Omega} |g(z)| \leq C_1
\end{equation}
where the first inequality is the continuous maximum principle, Proposition \ref{continuous max principle}.

For all $p \in \widehat{G}_1$, we have
\begin{equation} \label{G1-buffer-2}
\dist(p, \partial \Omega) \geq \gamma^{1/3}\, ,
\end{equation}
and for all $q \in \widetilde{G}_1$,
\begin{equation} \label{G1-buffer-3}
\dist(q, \partial \Omega) \geq \gamma^{1/3} - \e \geq \frac{15}{16} \gamma^{1/3}\, ,
\end{equation}
using \eqref{16-bound} in the last inequality. In particular, we have $\widetilde{G}_1 \subset \Omega$.

Proposition \ref{interior derivative estimates} along with \eqref{sup-hc} and \eqref{G1-buffer-2} implies that
\[
\|\nabla h_c\|_{\infty,\widehat{G}_1} \leq CC_1 \gamma^{-1/3}\, .
\]
For the Hessian bound, we observe that since $h_c$ is harmonic, each $Hh_c(y)$ has trace zero and $\|Hh_c(y)\|_2 = |\det Hh_c(y)|^{1/2}$. The individual matrix entries are bounded using Proposition \ref{interior derivative estimates} with \eqref{sup-hc} and \eqref{G1-buffer-3}, and we obtain
\[
\|Hh_c\|_{\infty,\widetilde{G}_1} \leq CC_1 \gamma^{-2/3}\, . \qedhere
\]
\end{proof}

\begin{proof}[Proof of Claim \ref{claim:hd-hd1}]
For any $y \in \Int(V_1^\bullet)$, a function is discrete harmonic at $y$ with respect to $(G^\bullet,c)$ if and only if it is discrete harmonic at $y$ with respect to $(G_1^\bullet,c)$ since the two networks are the same in the immediate neighborhood of $y$. Thus the function $h_d - h_d^{(1)}$ is discrete harmonic on $\Int(V_1^\bullet)$ with respect to $(G_1^\bullet,c)$. By the discrete maximum principle (Proposition \ref{max principle}) and the definition of $h_d^{(1)}$,
\[
|h_d(x) - h_d^{(1)}(x)| \leq \sup_{y \in \partial V_1^\bullet} |h_d(y) - h_d^{(1)}(y)| = \sup_{y \in \partial V_1^\bullet} |h_d(y) - h_c(y)|\, .
\]
Every $y \in \partial V_1^\bullet$ satisfies $y \in \Omega$ and, by Claim \ref{claim:G[S]},
\[
\dist(y, \partial \Omega) \leq (9/8) \gamma^{1/3} < 2\gamma^{1/3}\, .
\]
Claim \ref{claim:boundary} therefore implies that
\[
|h_d(y) - h_c(y)| \leq \frac{C(C_1 + C_2 \e)}{\log^{1/2}(1/\gamma)}
\]
and this completes the proof.
\end{proof}

\begin{proof}[Proof of Claim \ref{claim:A-smooth}]
We know that $x \in V_1^\bullet$. Also, $\dist(x, \partial \Omega) > 2\gamma^{1/3}$ while every point $p \in \partial G_1$ satisfies $\dist(p, \partial \Omega) \leq (9/8)\gamma^{1/3}$ by Claim \ref{claim:G[S]}. Hence the closed disk centered at $x$ of radius $(7/8)\gamma^{1/3}$ is contained in $\Int(G_1)$. Since $(7/8)\gamma^{1/3} \geq 14\e$ by \eqref{16-bound}, we have $B \subset \Int(G_1)$ with room to spare.

Given $y \in A$, we write
\begin{equation} \label{triangle-3}
|f(x) - f(y)| \leq |h_d^{(1)}(x) - h_d^{(1)}(y)| + |h_c(x) - h_c(y)|\, .
\end{equation}
The line segment $xy$ is contained in $\Int(G_1)$, so we may use \eqref{deriv-bound} to see that
\begin{equation} \label{hc-on-A}
|h_c(x) - h_c(y)| \leq \|\nabla h_c\|_{\infty,\widehat{G}_1} (3\e) \leq CC_1 \e^{2/3}\, .
\end{equation}
To bound $|h_d^{(1)}(x) - h_d^{(1)}(y)|$, we use Proposition \ref{equicontinuity}. We have $r = \frac{1}{2}|x-y| \leq 3\e/2$, and we choose $R = (5/8)\gamma^{1/3}$. By \eqref{16-bound}, $R \geq 10\e > 2r + 3\e$. In addition, the closed disk $D$ of radius $R$ centered at $(x+y)/2$ is contained in the closed disk of radius $R + 3\e/2$ centered at $x$. Again using \eqref{16-bound},
\[
R + \frac{3}{2}\e \leq \left( \frac{5}{8} + \frac{3}{32} \right) \gamma^{1/3} < \frac{7}{8} \gamma^{1/3}\, .
\]
Thus $D$ contains no vertices in $\partial V_1^\bullet$, and the value of $\beta$ in Proposition \ref{equicontinuity} is zero. Finally, we compute
\[
\frac{R}{r+\e} \geq \frac{(5/8) \gamma^{1/3}}{(5/2)\e} \geq \frac{1}{4} \e^{-2/3} \geq \e^{-1/3}\, ,
\]
using the $-1/3$ power of \eqref{64-bound} in the last inequality.

Propositions \ref{projection properties} and \ref{sum integral convergence} give
\begin{equation} \label{E1-bound}
\EE_1^\bullet(h_d^{(1)}) \leq \EE_1^\bullet(h_c) \leq 2 \int_{\widehat{G}_1} |\nabla h_c|^2 + 2 \area(\widehat{G}_1)(10LM\e + 8 M^2 \e^2)
\end{equation}
where, by \eqref{deriv-bound} and \eqref{Hessian-bound},
\begin{align*}
L &= \|\nabla h_c\|_{\infty, \widehat{G}_1} \leq CC_1 \gamma^{-1/3}\, , \\
M &= \|Hh_c\|_{\infty, \widetilde{G}_1} \leq CC_1 \gamma^{-2/3}\, .
\end{align*}
We have $\area(\widehat{G}_1) \leq C$. As well, Proposition \ref{energy minimization} implies that
\[
\int_{\widehat{G}_1} |\nabla h_c|^2 \leq \int_\Omega |\nabla h_c|^2 \leq \int_\Omega |\nabla g|^2 \leq CC_1^2 \, .
\]
Plugging these bounds into \eqref{E1-bound} yields
\[
\EE_1^\bullet(h_d^{(1)}) \leq CC_1^2 + C(C_1^2 \e \gamma^{-1} + C_1^2 \e^2 \gamma^{-4/3}) \leq CC_1^2 \, .
\]

We now apply Proposition \ref{equicontinuity} to obtain
\begin{equation} \label{hd-on-A}
|h_d^{(1)}(x) - h_d^{(1)}(y)| \leq \frac{CC_1}{\log^{1/2}(1/\e)} \, .
\end{equation}
Combining \eqref{hc-on-A} with \eqref{hd-on-A} in \eqref{triangle-3},
\[
|f(x) - f(y)| \leq \frac{CC_1}{\log^{1/2}(1/\e)} + CC_1 \e^{2/3} \leq \frac{CC_1}{\log^{1/2}(1/\e)} \, . \qedhere
\]
\end{proof}

\section*{Acknowledgements}
We thank David Jerison for useful discussions. This research is supported by ISF grants 1207/15 and 1707/16 as well as ERC starting grant 676970 RANDGEOM. The second author is supported by a Zuckerman Postdoctoral Fellowship. \\

\footnotesize{
\bibliographystyle{abbrv}
\bibliography{orthodiagonal-final}
 }

\end{document}